\newcommand{\revise}[1]{#1}
\author{
 Elena Celledoni
}
\address{
  Elena Celledoni
  \thanks{
    Department of Mathematical Sciences, NTNU, 7491 Trondheim, Norway
    {\tt{elena.celledoni@ntnu.no}}.
}}
\author{
  James Jackaman
}
\address{
  James Jackaman
  \thanks{
    Department of Mathematics and Statistics, Memorial University of
    Newfoundland, St.\ John's, NL, A1C 5S7, Canada
    {\tt{jjackaman@mun.ca}}.
}}
\title[Discrete conservation laws]
      {Discrete conservation laws for finite element discretisations
        of multisymplectic PDEs}
      \date{\today}
\begin{document}
\maketitle

\begin{abstract}

  In this work we propose a new, arbitrary order space-time finite
  element discretisation for Hamiltonian PDEs in multisymplectic
  formulation. We show that the new method which is obtained by using
  both continuous and discontinuous discretisations in space, admits a
  local and global conservation law of energy. We \revise{also} show
  existence and uniqueness of \revise{solutions} of the discrete
  equations. Further, we illustrate the error behaviour and the
  conservation properties of the proposed discretisation in extensive
  numerical experiments on the linear and nonlinear wave equation and
  on the nonlinear Schr{\"o}dinger equation.
  
\end{abstract}


\section{Introduction}
\label{sec:introduction}

Finite element discretisations of space-time variational problems have
seen a revival of interest in the recent literature
\cite{henning19mor, perugia20tpa, antonietti18aho, urban14aie}, with
their origin going back to the work of \cite{lions68pal,
  AzizMonk:1989}. The focus of the present work is the
structure-preserving discretisation of variational problems using
finite element methods. The point of departure is the variational
space-time formulation of PDE problems arising as the Euler-Lagrange
equations of a space-time action functional. Formally via the Legendre
transform one obtains the Hamiltonian formulation of these partial
differential equations \cite{Olver}.  However a space-time analogue of
the Legendre transform gives rise to the so called multisymplectic
formulation of these PDEs as originally proposed by Bridges
\cite{Bridges:1997}, and intrinsically generalised in
\cite{marsden97mgc, marsden98mgv}.

There are two main proposed discretisation approaches to the
multisymplectic formulation of Hamiltonian PDEs. The first is
inspired by a technique proposed by Veselov to discretise Hamiltonian
ODEs and consists of {\it discretising} the Lagrangian density to
yield a discrete analogue of the variational principle, and {\it then
  extremising} to obtain discrete Euler Lagrange equations,
\cite{marsden98mgv}. The second is obtained by first {\it extremising}
the variational principle, writing the multisymplectic PDEs in the
strong multisymplectic formulation, and {\it then discretising} these
equations with structure preserving approximations,
\cite{BridgesReich:2001}.  The first approach leads automatically to
the conservation of a discrete multisymplectic conservation law and
the corresponding numerical methods typically show as a side effect
good local conservation of a modified energy and momentum in numerical
experiments. The second approach is more flexible, while
discretisations with similar properties can be obtained using
appropriate multisymplectic schemes \cite{BridgesReich:2001}, this
formulation can be also easily adopted to obtain methods satisfying
local conservation laws of energy or momentum
\cite{GongCaiWang:2014}. In the sequel we shall follow the latter
approach. We note for Hamiltonian ODEs that symplectic schemes are often
preferred over conservative schemes as they allow for backward error
analysis, and the resulting schemes will preserve a modified conserved
quantity \cite{HairerLubichWanner:2006}. A backward error analysis for
multisymplectic schemes in the PDE setting is not yet fully
developed, \cite{MooreReich:2003, IslasSchober:2005}. On the other
hand, being able to bound the approximation by the energy often proves
to be an invaluable property for convergence studies of both geometric
numerical integrators \cite{CourantFriedrichsLewy:1967,
  BuchholzEtAl:2018, BuchholzDoerichHochbruck:2020} and finite element
schemes via energy arguments \cite{Thomee:2006}.

Most of the proposed discretisations of multisymplectic PDEs are
proposed in the framework of finite differences with restrictions to
rectangular domains, are not easy to use on irregular domains, and are
often of low order with some exceptions \cite{zhen03sam, tang17dgm,
  McLachlanStern:2017, reich00msr, mclachlan14hom, Beirao:2017}. Due
to the nature of their formulation, finite element methods may
overcome these issues. It has also been remarked \cite{ryland08omo}
that some of the proposed multisymplectic discretisations might not
be well defined locally and or globally or not have solutions/unique
solutions, \cite{AscherMcLachlan:2004}, \cite{mclachlan14hom}.

Standard finite element methods, when arising from boundary value
problems, do not historically lead to discretisations which may be interpreted
locally. Indeed, we may on a case-by-case basis move out of the finite
element framework through the assembling of the associated algebraic
system which may then be interpreted locally similarly to a spatial
finite difference discretisation, see for example
\cite{KarakashianMakridakis:1999}. The recent work
\cite{McLachlanStern:2017} utilises the \emph{hybridisable
  discontinuous Galerkin} framework
\cite{CockburnGopalakrishnan:2004}, in which the solution inside an
element and on the element boundaries are considered independently and
coupled to ensure global communication of the solution. Fortuitously,
standard finite element methods also fit within this framework
\cite{cockburn09uho}, and as such global solution properties may be
naturally restricted to the patch surrounding a single element.

Another novel approach which allows for the incorporation of spatially
local conservation laws are local discontinuous Galerkin \emph{(ldG)}
methods \cite{CockburnShu:1998, CastilloEtAl:2000, XuShu:2005}, in
which a given PDE is reduced to a first order system through the
incorporation of auxiliary variables. The resulting discretisation is
comprised of discontinuous approximations of first derivatives
utilising either upwind or downwind flux contributions, which allow
for the development of schemes which preserve conservation laws for
PDEs with even order spatial derivatives \cite{XingChouShu:2013,
  CastilloGomez:2018}. A crucial implementational benefit of these
methods is that due to the discontinuous nature of the approximation
the discrete system may be rewritten as a single equation, and as such
the resulting algorithms are of a competitive complexity. In this
work, we shall consider similar spatial discretisations, however, due
to the general multisymplectic framework considered we choose an
\emph{average} spatial flux, as opposed to the upwind/downwind
considered in the ldG setting. Our choice here is similar to that made
in \cite{self:defocus}. This average spatial flux choice allows us to
utilise a discrete integration by parts which is fundamental to the
preservation of conservation laws in a general setting.  If we
restrict ourselves to consider multisymplectic PDEs with even order
spatial derivatives, appropriate modifications of the spatial
derivative may be designed to preserve local conservation laws for
this class of multisymplectic PDEs with upwind/downwind fluxes, which
often yields an optimal rate of convergence in the resulting numerical
scheme.

Temporally, finite element methods are a competitive, well studied,
class of methods \cite{EstepFrench:1994, FrenchSchaeffer:1990,
  Estep:1995} dating back to 1969 \cite{Fried:1969}. Further to this,
the conforming approximation is well known to preserve energy for
Hamiltonian problems \cite{BetchSteinmann:2000, Hansbo:2001,
  TangSun:2012, self:thesis}. For space-time finite element
approximations the temporal are typically nonconforming, with the
standard choice of a discontinuous upwind flux introducing
artificial diffusion \cite{EstepStuart:2002} which facilitates stability,
even when the space-time algorithm is adapted. Here we shall focus our
attention on the temporally conforming approximation, with a view to
extend this work to incorporate a new form of \emph{conservative}
space-time adaptive algorithm with inherent stability building on
ideas developed in \cite{self:thesis}.

The outline of the paper is as follows, in Section~\ref{sec:multisym}
we briefly review the main features of multisymplectic PDEs including
examples; in Section~\ref{sec:cg} we present the continuous space-time
finite element discretisation and prove existence and uniqueness of
solutions of the discrete equations at the end of the section; in
Section~\ref{sec:dg} we extend the method to the spatially
discontinuous case; in Section~\ref{sec:numerics} we conduct numerical
experiments; and in Section~\ref{sec:conclusion} we conclude.

\section{Multisymplectic PDEs} \label{sec:multisym}

Let $u=u(t,x)$ where $t \in [0,T]$ and $x \in S^1 = [0,1)$ periodic
and consider the space-time variational problem
\begin{equation}
  \label{eq:0}
  0
  =
  \delta \int_{[0,T] \times S^1} L(u,u_t,u_x)\, \di{x} \,\di{t}
  ,
\end{equation}
where $L(u,u_t,u_x)$ is some given Lagrangian density
function. Multisymplectic PDEs may arise naturally from such problems
through the following methodology. Introducing the auxiliary variables
\begin{equation} \nonumber
  v:=\frac{\partial L}{\partial u_t},
  \qquad
  w:=\frac{\partial L}{\partial u_x}
  ,
\end{equation}
and assuming that $u_t=u_t(v),$ $u_x=u_x(w)$ are invertible functions,
we can define the Hamiltonian density
\begin{equation} \nonumber
  \S{u,v,w}:=vu_t+wu_x-L(u,u_t(v),u_x(w))
  .
\end{equation}
We may then express the variational principle by means of $\S{u,v,w}$
in the new variables as
\begin{equation} \nonumber
  0
  =
  \delta \int_{[0,T] \times S^1}
  \bc{vu_t+wu_x-\S{u,v,w} }\,
  \di{x} \,\di{t}
  ,
\end{equation}
and through taking variations
$\bc{\psi_u,\psi_v,\psi_w}^T =: \vec{\psi}$ we obtain
\begin{equation}
  \label{eq:2}
  0
  =
  \int_{[0,T]\times S^1}
  \left( K\,\vec{z}_t+L\,\vec{z}_x-\nabla \S{\vec{z}}\right)
  \cdot \vec{\psi} \quad \di{x}\,\di{t},
  \qquad \forall \vec{\psi}
  ,
\end{equation}
where
\begin{equation} \nonumber
  \vec{z}:=\left[  
    \begin{array}{c}
      u\\
      v\\
      w
    \end{array}
  \right], \quad \vec{\psi}:=\left[  
    \begin{array}{c}
      \psi_{u}\\
      \psi_{v}\\
      \psi_{w}
    \end{array}
  \right], \qquad K=\left[
    \begin{array}{ccc}
      0 & -1 & 0\\
      1 & 0 & 0\\
      0 & 0& 0
    \end{array}
  \right], \qquad L=\left[
    \begin{array}{ccc}
      0 & 0 & -1\\
      0 & 0 & 0\\
      1 & 0& 0
    \end{array}
  \right]
  .
\end{equation}
We note that this integral yields an example of what we refer to as a
\emph{multisymplectic PDE} as discussed in \cite{Bridges:1997}, and
includes examples such as wave equations. Additionally, through
modifying the dependencies of the Lagrangian density function we may
obtain more examples of multisymplectic PDEs.  This underlying
variational nature of multisymplectic PDEs advocates space-time finite
elements as a desirable discretisation methodology.


\begin{definition}[Multisymplectic PDEs] \label{def:multisym}

  Let $t \in [0,T]$ for some fixed $T$ and let $x \in S^1:=[0,1)$ be
  periodic. Further let $K, L \in \real^{D \times D}$ be
  \emph{constant} skew-symmetric matrices for integer $D \ge 2$. Then
  a multisymplectic PDE is described by seeking
  $\vec{z}: (t,x) \to \real^D$ such that
  \begin{equation} \label{eqn:multisym}
    K \vec{z}_t + L \vec{z}_x  = \nabla \S{\vec{z}}
    ,
  \end{equation}
  where $\S{} : \real^D \to \real$. 

\end{definition}

\begin{remark}[Skew-symmetric inner product] \label{rem:skewinner}

  \revise{A skew-symmetric matrix induces a skew-symmetric product,
    that is to say that} if we have $\vec{u},\vec{v} \in \real^D$ then
  \revise{for any skew-symmetric $K \in \real^{D\times D}$}
  \begin{equation}
    \vec{u} \cdot K \vec{v} = - K \vec{u} \cdot \vec{v}
    ,
  \end{equation}
  \revise{where $\cdot$ denotes the Euclidean inner product.}
  
\end{remark}

The namesake of a multisymplectic PDE is its multisymplectic
structure, which falls within the form of a conservation law. To fully
describe this structure we utilise the Cartan derivative and the
skew-symmetric wedge product. For a detailed analysis of the
multisymplectic structure see \cite{BridgesReich:2001}.

\begin{theorem}[Multisymplectic structure] \label{thm:multisymstruct}

  The PDE \eqref{eqn:multisym} possesses the multisymplectic
  conservation law
  \begin{equation}
    \bc{ \d \vec{z} \wedge K \vec{z} }_t 
    + \bc{ \d \vec{z} \wedge L \vec{z} }_x
    =
    0
    .
  \end{equation}

\end{theorem}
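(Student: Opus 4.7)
The plan is to linearise \eqref{eqn:multisym} by taking its Cartan derivative, wedge the resulting vector-valued 1-form identity against $\d\vec{z}$ itself, and then exploit the skew-symmetry of $K,L$ together with the symmetry of the Hessian $\nabla^2\S{\vec{z}}$ to read off the conservation law.

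First, I would apply $\d$ to both sides of \eqref{eqn:multisym}. Since $K$ and $L$ are constant they commute through $\d$, producing the linearised equation
\[
K(\d\vec{z})_t + L(\d\vec{z})_x = \nabla^2\S{\vec{z}}\,\d\vec{z},
\]
an identity between $\real^D$-valued 1-forms. Next I would wedge this identity from the left with $\d\vec{z}$. The right-hand side reads $\d\vec{z}\wedge \nabla^2\S{\vec{z}}\,\d\vec{z} = \sum_{i,j}(\nabla^2\S{\vec{z}})_{ij}\,\d z_i\wedge \d z_j$, which vanishes because the Hessian is symmetric while $\d z_i\wedge \d z_j$ is skew in $(i,j)$. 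Finally, I would convert the left-hand side into a pair of total derivatives. Using the anticommutativity $\alpha\wedge\beta = -\beta\wedge\alpha$ of 1-forms together with the identity $\vec{u}\cdot K\vec{v} = -K\vec{u}\cdot\vec{v}$ from the preceding remark, one checks that $(\d\vec{z})_t\wedge K\,\d\vec{z} = \d\vec{z}\wedge K(\d\vec{z})_t$, so the Leibniz rule yields
\[
\partial_t\bc{\d\vec{z}\wedge K\,\d\vec{z}} = 2\,\d\vec{z}\wedge K(\d\vec{z})_t,
\]
and analogously for the spatial term with $L$. Combining with the vanishing right-hand side gives the advertised conservation law (with the conventional factor of $1/2$ absorbed into the definition of the symplectic and Lagrangian 2-forms).

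The main obstacle is the sign bookkeeping in the last step: both the skew-symmetry of $K$ and the anticommutativity of $\wedge$ flip signs, and a plausible miscount collapses the Leibniz identity to $0$ instead of producing the factor $2$. Once that identity is established carefully, the theorem follows from the symmetry of $\nabla^2\S{\vec{z}}$ without further work.
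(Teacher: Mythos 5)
Your proposal is correct and follows essentially the same route as the paper: apply the Cartan derivative to \eqref{eqn:multisym}, wedge with $\d\vec{z}$, kill the right-hand side via the symmetry of the Hessian against the skew-symmetry of the wedge, and assemble the left-hand side into total $t$- and $x$-derivatives using the skew-symmetry of $K$ and $L$ (the paper wedges from the right rather than the left, and carries the same factor of $\tfrac12$ you note). Your sign check confirming $(\d\vec{z})_t\wedge K\,\d\vec{z} = \d\vec{z}\wedge K(\d\vec{z})_t$ is exactly the step the paper summarises as ``through the skew-symmetry of the wedge product.''
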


\begin{proof}

  Applying the Cartan exterior derivative to \eqref{eqn:multisym} we find
  \begin{equation} \label{p:sym1}
    K \d \vec{z}_t + L \d \vec{z}_x 
    =
    \nabla^T \nabla \S{\vec{z}} \d \vec{z}
    .
  \end{equation}
  Acting on \eqref{p:sym1} with $\wedge \d \vec{z}$ from the right we
  can write 
  \begin{equation}
    K \d \vec{z}_t \wedge \d \vec{z} + L \d \vec{z}_x \wedge \d \vec{z}
    =
    \nabla^T \nabla \S{\vec{z}} \d \vec{z} \wedge \d \vec{z}
    .
  \end{equation}
  Through the skew-symmetry of the wedge product we find that
  \begin{equation}
    \frac12\bc{ K \d \vec{z} \wedge \vec{z}}_t
    =  K \d \vec{z}_t \wedge \d \vec{z}
    ,
  \end{equation}
  and similarly for the spatial derivative. Additionally, through the
  skew-symmetry of the wedge product and the symmetry of $\nabla^T
  \nabla \S{\vec{z}}$ we have that
  \begin{equation}
    \nabla^T \nabla \S{\vec{z}} \d \vec{z} \wedge \d \vec{z} = 0
    ,
  \end{equation}
  allowing us to conclude.
  
\end{proof}

\begin{remark}[The discrete preservation of the multisymplectic
  structure]

  The multisymplectic structure discussed in Theorem
  \ref{thm:multisymstruct} is possible to preserve on the discrete
  level, and leads to desirable numerical results, see
  \cite{BridgesReich:2001,McLachlanStern:2017, McDonald:2016,
    Cano:2006, Beirao:2017}.

  On a fundamental level, multisymplectic conservation laws treat time
  and space equivalently, which is natural for elliptic and some
  hyperbolic problems. However, for evolution problems this is not
  always ideal. Indeed, even boundary conditions are treated
  independently in space and time. In this contribution we shall
  discretise space and time non-homogeneously, in the sense that our
  discretisations in each variable are fundamentally
  different. Indeed, in the lowest order conforming case our proposed
  discretisation will be equivalent to discretising with the Average
  Vector Field method in time and conforming finite elements in space
  \cite{CelledoniEtAl:2012}. As such, we do not expect the methods
  proposed here to exactly preserve a discrete multisymplectic
  conservation law.
  
\end{remark}

Further to the multisymplectic conservation laws, all multisymplectic
PDEs also preserve momentum and energy conservation laws, which
correspond to spatial and temporal translations respectively via
Noether's theorem.

\begin{theorem}[Classical conservation laws in multisymplectic PDEs]

  Let $\vec{z}$ be the solution of the multisymplectic PDE
  \eqref{eqn:multisym} as set out in Definition \eqref{def:multisym}.
  Further define the momentum flux and density as
  \begin{align} \label{eqn:momentumstrong}
    \mflux = \frac12 \vec{z} \cdot K \vec{z}_t - \S{\vec{z}}
    \qquad \qquad 
    \mdensity = \frac12 \vec{z}_x \cdot K \vec{z}
  \end{align}
  respectively, and energy flux and density as
  \begin{align} \label{eqn:energystrong}
    \eflux = \frac12 \vec{z}_t \cdot L \vec{z}
    \qquad \qquad
    \edensity = \frac12 \vec{z} \cdot L \vec{z}_x - \S{\vec{z}}
  \end{align}
  respectively. Then the momentum conservation law
  \begin{equation} \label{eqn:mlaw}
    \mdensity_t + \mflux_x = 0
  \end{equation}
  holds. Additionally, $\vec{z}$ possesses the energy conservation law
  \begin{equation} \label{eqn:elaw}
    \edensity_t + \eflux_x = 0
    .
  \end{equation}

\end{theorem}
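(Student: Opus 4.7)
The plan is to verify both conservation laws by direct computation, exploiting the skew-symmetry of $K$ and $L$ together with the multisymplectic PDE \eqref{eqn:multisym} itself. Since the arguments for \eqref{eqn:mlaw} and \eqref{eqn:elaw} are symmetric under the interchange of the roles of $(t,K)$ and $(x,L)$, I would present one in detail and then indicate the analogous computation for the other.

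For the momentum law, I would first differentiate $\mdensity$ in $t$ and $\mflux$ in $x$ via the product rule, obtaining
\begin{equation}
\mdensity_t + \mflux_x
= \tfrac12\bc{\vec{z}_{xt}\cdot K\vec{z} + \vec{z}_x\cdot K\vec{z}_t}
+ \tfrac12\bc{\vec{z}_x\cdot K\vec{z}_t + \vec{z}\cdot K\vec{z}_{tx}}
- \nabla\S{\vec{z}}\cdot\vec{z}_x.
\end{equation}
I would then apply the skew-symmetry identity from the preceding remark, namely $\vec{z}_{xt}\cdot K\vec{z} = -\vec{z}\cdot K\vec{z}_{xt}$, to cancel the two mixed-derivative contributions. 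This collapses the right-hand side to $\vec{z}_x\cdot K\vec{z}_t - \nabla\S{\vec{z}}\cdot\vec{z}_x$.

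Next I would substitute the multisymplectic PDE \eqref{eqn:multisym}, $\nabla\S{\vec{z}} = K\vec{z}_t + L\vec{z}_x$, to rewrite
\begin{equation}
\nabla\S{\vec{z}}\cdot\vec{z}_x
= \vec{z}_x\cdot K\vec{z}_t + \vec{z}_x\cdot L\vec{z}_x.
\end{equation}
The last term vanishes because skew-symmetry of $L$ forces the quadratic form $\vec{z}_x\cdot L\vec{z}_x$ to be zero, leaving $\vec{z}_x\cdot K\vec{z}_t$, which exactly cancels the remaining contribution. Hence $\mdensity_t + \mflux_x = 0$.

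The energy law \eqref{eqn:elaw} follows by the symmetric argument: differentiate $\edensity$ and $\eflux$, use skew-symmetry of $L$ to annihilate the mixed-derivative pieces $\vec{z}\cdot L\vec{z}_{xt} + \vec{z}_{tx}\cdot L\vec{z}$, then invoke \eqref{eqn:multisym} to write $\nabla\S{\vec{z}}\cdot\vec{z}_t = \vec{z}_t\cdot K\vec{z}_t + \vec{z}_t\cdot L\vec{z}_x$, and finally observe that skew-symmetry of $K$ kills the first summand while the second cancels what remains. There is no genuine obstacle here; the only subtlety worth double-checking is the bookkeeping of signs when moving $K$ or $L$ across the inner product, which is exactly why the skew-symmetric product identity has been isolated in the remark immediately above. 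No integration by parts or boundary terms are needed, so the result is purely algebraic and pointwise.
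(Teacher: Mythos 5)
Your proof is correct and follows essentially the same route as the paper: expand $\mdensity_t+\mflux_x$ by the product rule, cancel the mixed-derivative terms via skew-symmetry, then substitute the PDE dotted with $\vec{z}_x$ (respectively $\vec{z}_t$) and use skew-symmetry of $L$ (respectively $K$) to kill the quadratic form. The only difference is cosmetic ordering of the steps, so nothing further is needed.
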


\begin{proof}
  
  The proofs of \eqref{eqn:mlaw} and \eqref{eqn:elaw} follow from the
  application of the PDE \eqref{eqn:multisym} and the constant
  skew-symmetric structures of the $K$ and $L$. As we utilise this
  methodology for the proof of momentum and energy conservation laws
  on the discrete level we shall present the proof of \eqref{eqn:mlaw}
  fully to inform the sequel. 
  
  Through \eqref{eqn:momentumstrong} we can write
  \begin{equation} \label{p:cclaw1}
    \mflux_x
    =
    \frac12 \vec{z}_x \cdot K \vec{z}_t
    + \frac12 \vec{z} \cdot K \vec{z}_{tx}
    - \nabla\S{\vec{z}} \cdot \vec{z}_x
    .
  \end{equation}
  Acting on the PDE \eqref{eqn:multisym} with $\cdot \vec{z}_x$ from
  the right we have
  \begin{equation} \label{p:cclaw2}
    \nabla\S{\vec{z}} \cdot \vec{z}_x
    =
    K \vec{z}_t \cdot \vec{z}_x + L \vec{z}_x \cdot \vec{z}_x 
    = 
    K \vec{z}_t \cdot \vec{z}_x
    ,
  \end{equation}
  through the skew-symmetry of $L$. Applying \eqref{p:cclaw2} to
  \eqref{p:cclaw1} we find
  \begin{equation} \label{p:cclaw3}
    \mflux_x
    =
    - \frac12 \vec{z}_x \cdot K \vec{z}_t
    + \frac12 \vec{z} \cdot K \vec{z}_{tx}
    .
  \end{equation}
  Through \eqref{eqn:momentumstrong} we can also write
  \begin{equation} \label{p:cclaw4}
    \begin{split}
      \mdensity_t
      & =
      \frac12 \vec{z}_{xt} \cdot K \vec{z} 
      + \frac12 \vec{z}_x \cdot K \vec{z}_t \\
      & = 
      -\frac12 \vec{z} \cdot K \vec{z}_{xt}
      + \frac12 \vec{z}_x \cdot K \vec{z}_t
      ,
    \end{split}
  \end{equation}
  through the skew-symmetry of $K$. Summing \eqref{p:cclaw3} and
  \eqref{p:cclaw4} we obtain the desired result for the momentum law
  \eqref{eqn:mlaw}. For the energy law \eqref{eqn:elaw} we follow the
  same methodology but instead act on \eqref{eqn:multisym} with
  $ \cdot \vec{z}_t$.

\end{proof}

\begin{example}[Nonlinear wave equations] \label{ex:nlw}

  Consider the nonlinear wave equation
  \begin{equation}
    u_{tt} - u_{xx} - \Vp{u} = 0
    .
  \end{equation}
  Through the introduction of the auxiliary variables
  \begin{equation}
    \begin{split}
      v & := u_t \\
      w & := u_x
      ,
    \end{split}
  \end{equation}
  we can write the nonlinear wave equation in multisymplectic form
  through defining $\vec{z} = (u,v,w)$, so $D=3$, with
  \begin{equation}
    \begin{split}
      K
      =
      \begin{pmatrix}
        0 & -1 & 0 \\
        1 & 0 & 0 \\
        0 & 0 & 0
      \end{pmatrix}
      & \qquad \qquad \qquad
      L =
      \begin{pmatrix}
        0 & 0 & 1 \\
        0 & 0 & 0 \\
        -1 & 0 & 0 
      \end{pmatrix}
      \\
      \S{\vec{z}} & = 
      \frac12 v^2 - \frac12 w^2 + \V{u}
      .
    \end{split}
  \end{equation}
  We additionally observe that the multisymplectic form of the
  momentum and energy density are compatible with the standard forms
  which are given by $\frac12 u_{xt} u - \frac12 u_x u_t$ and
  $\frac12 u u_{xx} - \frac12 u_t^2 - \V{u}$ respectively.
  Note that this multisymplectic PDE falls within the framework
  discussed at the beginning of this section, with a Lagrangian
  density of
  \begin{equation}
    L\bc{u,u_t,u_x}
    =
    \frac12 u_t^2 - \frac12 u_x^2 - \V{u}
    .
  \end{equation}
  
\end{example}


  

\begin{example}[A nonlinear {Schr\"odinger} equation] \label{ex:nls}

  Let us consider the nonlinear Schr\"odinger \emph{(NLS)} equation
  \begin{equation}
    i \xi_{t} + \xi_{xx} + \abs{\xi}^2 \xi = 0
    ,
  \end{equation}
  where $\xi(t,x)$ takes complex values. Through splitting into real
  and complex components, and introducing auxiliary variables for the
  spatial derivatives, we may rewrite the NLS equation in
  multisymplectic form. In particular, we define
  $\vec{z} = (u, v, p, q)$ through the introduction of the variables
  \begin{equation}
    \begin{split}
      u & = \Re{\xi} \\
      v & = \Im{\xi} \\
      p & = u_x \\
      q & = v_x
      ,
    \end{split}
  \end{equation}
  the multisymplectic formulation may then be expressed through the
  skew-symmetric operators
  \begin{equation}
    \begin{split}
      K
      =
      \begin{pmatrix}
        0 & -1 & 0 & 0 \\
        1 & 0 & 0 & 0 \\
        0 & 0 & 0 & 0 \\
        0 & 0 & 0 & 0
      \end{pmatrix}
      & \qquad \qquad \qquad
      L =
      \begin{pmatrix}
        0 & 0 & 1 & 0 \\
        0 & 0 & 0 & 1 \\
        -1 & 0 & 0 & 0 \\
        0 & -1 & 0 & 0
      \end{pmatrix}
      ,
    \end{split}
  \end{equation}
  and the symmetric nonlinear function
  \begin{equation}
    \S{\vec{z}}
    =
    - \frac14 \bc{u^2 + v^2}^2 - \frac12 \bc{p^2 + q^2}
    .
  \end{equation}
    
\end{example}

\section{Continuous space-time finite element
  approximations} \label{sec:cg}

We shall now focus on the design of a conforming space-time finite
element method which is constructed through the tensor product of
spatial and temporal finite element discretisations. This decoupling
of the spatial and temporal discretisations allows us to combine a
conservative spatial method with a temporally conservative method,
which for the study of Hamiltonian ODEs and Hamiltonian PDEs are
fundamentally different, see \cite{self:thesis}.

Before defining our finite element space we must give consideration to
the discretisation of the temporal and spatial domains. We partition
our temporal domain $\bs{0,T}$ such that
$0:=t_0 < t_1 < \cdots < t_N =:T$, and define a temporal finite
element as $\telement{n} := \left[t_n,t_{n+1}\right)$ which possesses an
element length $\dt{n} := t_{n+1} - t_n$. Similarly, we partition our
periodic spatial domain $S^1$ such that
$0:=x_0 < x_1 < \cdots < x_M := 1$ and define a spatial finite element
as $\selement{m} := \bc{x_m,x_{m+1}}$ which possesses an element
length $\dx{m} := x_{m+1}-x_m$.

To define the spatial and temporal finite element spaces
simultaneously we consider an arbitrary domain $\Omega \subset \real$
which is partitioned such that $y_0 < y_1 < \cdots < y_J$ possessing
an arbitrary element $\aelement{j} := \bc{y_{j},y_{j+1}}$. 

\begin{definition}[Finite element spaces] \label{def:fes}

  The concise definition of a given finite element space depends on
  the discretisation of the domain being considered. Here we consider
  the discretisation of $\Omega$ by the elements $\aelement{j}$.

  Let $\bpoly{r}\bc{\aelement{j}}$ denote the space of polynomials of
  degree $r$ on an interval $\aelement{j} \subset \mathbb{R}$, and
  $\left( \bpoly{r} \bc{\aelement{j}} \right)^{D}$ be the
  $D$-dimensional vector extension of the aforementioned space, then
  the discontinuous finite element space is
  \begin{equation} \label{def:dpoly}
    \dpoly{r}\bc{\Omega}
    =
    \{ \vec{W} : \left. \vec{W} \right|_{\aelement{j}} 
    \in \left( \bpoly{r} \bc{\aelement{j}} \right)^{D}, j=0,...,J-1 \},
  \end{equation}
  further to this the continuous finite element space is defined
  analogously with global continuity enforced, i.e.,
  \begin{equation} \label{def:cpoly}
    \cpoly{r} (\Omega)
    =
    \dpoly{r} (\Omega) \cap \bc{\mathcal{C}^0\bc{\Omega}}^D
    .
  \end{equation}

\end{definition}

\begin{remark}[Space-time finite element
  space] \label{rem:spacetimespace} 

  In this work we define our space-time finite element spaces through
  the tensor product of one dimensional \revise{temporal and spatial}
  finite element spaces, allowing us, in some sense, to use
  \revise{temporal or spatial} finite element techniques where
  required. We shall seek a discrete solution in the trial space
  \begin{equation}
    \cpoly{q+1}\bc{\bs{0,T}} \times \cpoly{p}\bc{S^1}
  \end{equation}
  which we test against all functions in
  \begin{equation}
    \dpoly{q}\bc{\bs{0,T}} \times \cpoly{p}\bc{S^1}
    .
  \end{equation}
  Note that spatially we choose our trial and test spaces to be the
  same, however, temporally we choose our trial space such that if we
  differentiate in time we enter the test space. This is typical for
  conservative temporal finite element approximations, see
  \cite{TangSun:2012, BetchSteinmann:2000, Hansbo:2001,
    EstepFrench:1994, self:invariant, self:thesis}. \revise{Both the
    spatial and temporal spaces for the trial functions
    are conforming, in the sense that they are finite dimensional
    subspaces of the solution space on the continuous level.}

\end{remark}

We may define our space-time finite element
approximation of multisymplectic PDEs as follows.

\begin{definition}[Continuous space-time finite element method] \label{def:stfem}

  Seek $\vec{Z} \in \cpoly{q+1}\bc{\bs{0,T}} \times \cpoly{p}\bc{S^1}$ such that
  \begin{equation} \label{eqn:stfem}
    \begin{split}
      \stint 
      K\vec{Z}_t \cdot \phi + L\vec{Z}_x \cdot \phi
      - \nabla \S{\vec{Z}} \cdot \phi
      \di{x} \di{t} 
      & = 0 
      \qquad
      \forall \phi \in \dpoly{q}\bc{\bs{0,T}} \times \cpoly{p}\bc{S^1}
      \\
      \vec{Z}(0,x) & = \icproject{ \vec{z}(0,x) }
      ,
    \end{split}
  \end{equation}
  where $\icproject{}$ represents the $L_2$ projection into
  $\cpoly{p}(S^1)$.

\end{definition}

\begin{remark}[Localisation of the finite element
  method] \label{rem:localisation}

  While we have formulated \eqref{eqn:stfem} globally, it is crucial
  that we can write the formulation locally in \emph{time} due to the
  evolutionary nature of the problem. By design, our test function
  $\phi$ is discontinuous which allows us to rewrite the method in a
  time stepping fashion as seeking
  $\vec{Z} \in \cpoly{q+1}\bc{\telement{n}} \times \dpoly{p}\bc{S^1}$
  for $n = 0,...,N-1$ such that
  \begin{equation} \label{eqn:stlfem}
    \begin{split}
      \stlint 
      K\vec{Z}_t \cdot \psi + L\vec{Z}_x \cdot \psi
      - \nabla \S{\vec{Z}} \cdot \psi
      \di{x} \di{t} 
      & = 0 
      \qquad
      \forall \psi \in \bpoly{q}\bc{\telement{n}} \times \dpoly{p}\bc{S^1}
      ,
    \end{split}
  \end{equation}
  where $\vec{Z}(t_n,x)$ is enforced through either global continuity,
  or the initial condition $\vec{Z}(0,x) = \icproject{ \vec{z}(0,x) }$. This
  local formulation can be implemented in a time stepping fashion.

  Further to localising time we may localise the formulation over
  space. However, as the test functions are spatially continuous we
  may not express this localisation concisely, without the
  incorporation of hybridisable dG techniques or through examining the
  underlying nonlinear system of equations for which we solve.

\end{remark}

\begin{remark}[A spatially discontinuous finite element
  approximation] \label{rem:sdg}

  To clarify exposition here we have assumed a spatially continuous
  finite element approximation. In fact, through an appropriate
  discretisation of the spatial derivative we may construct a
  discontinuous geometric space-time finite element discretisation. A
  discontinuous discretisation has two significant benefits: The first
  is the resulting method is more receptive to adaptivity due to the
  spatially discontinuous nature of the solution, although
  complications do arise from the temporal continuity. The second is
  that the discontinuous nature allows us to employ local
  discontinuous Galerkin techniques \cite{CockburnShu:1998} to reduce
  the dimension of the system within the implementation of the method
  through rewriting the method as a single equation in terms of the
  original variable. This is often referred to as the \emph{primal
    form} \cite{Castillo:2006}. In practice, this is standard for the
  implementation of multisymplectic PDEs, such as box schemes
  \cite{BridgesReich:2001, AscherMcLachlan:2004}. We note that due to
  the conforming nature of the finite element method in time, we still
  cannot reduce components of the system with temporal derivatives
  into primal form. The development of novel tools is required to
  design a fully discontinuous approximation and falls beyond the
  scope of this work, although progress has been made to these ends
  \cite[Chapter 3]{self:thesis}. We shall investigate a spatially
  discontinuous approximation in detail in Section \ref{sec:dg}.
  
\end{remark}

We shall now investigate the discrete conservation laws which are
preserved by continuous space-time finite element approximation.

\begin{theorem}[Discrete conservation laws] \label{thm:claws}

  Let the conditions of Definition \eqref{def:stfem} hold, and in
  particular let $\vec{Z}$ be the solution of the space-time finite
  element method \eqref{eqn:stfem}. Additionally define the scalar function
  \begin{equation} \label{eqn:w}
    W :=
    \nabla \S{\vec{Z}} \cdot \project{\vec{Z}_x}
    ,
  \end{equation}
  where $\project{}$ is the $L_2$ projection into the finite element
  space $\dpoly{q}\bc{\telement{n}} \times \cpoly{p}\bc{S^1}$.
  Further define the fluxes $\dmflux,\deflux$ and densities
  $\dmdensity, \dedensity$ such that
  \begin{align} \label{eqn:fluxdensity}
    \dmflux = \frac12 \vec{Z}_t \cdot K\vec{Z} - \S{\vec{Z}}
    \quad & \quad
      \dmdensity = \frac12 \vec{Z}_x \cdot K\vec{Z}
    \\
    \deflux = \frac12 \vec{Z}_t \cdot L \vec{Z}
    \quad & \quad
            \dedensity = \frac12 \vec{Z}_x \cdot L \vec{Z}
            -
    \S{\vec{Z}} 
            .
  \end{align}
  The numerical solution $\vec{Z}$ conserves a consistent discrete momentum
  locally in time, namely
  \begin{equation} \label{eqn:mclaw}
    \stlint {\dmdensity}_t + \bc{\dmflux + \S{\vec{Z}}}_x - W \di{x} \di{t} = 0
    .
  \end{equation}
  Additionally, a discrete energy is conserved locally in time,
  which we can write as
  \begin{equation} \label{eqn:eclaw}
    \stlint \dedensity_t + \deflux_x \di{x} \di{t} = 0
    .
  \end{equation}
 
\end{theorem}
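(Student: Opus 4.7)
The strategy mirrors the continuous proof of the classical conservation laws, but requires a careful selection of test functions compatible with the discrete trial and test spaces. The key observation is that $\vec{Z}_t$ restricted to a single temporal slab is a polynomial of degree $q$ in time and retains its spatial regularity, so it already belongs to $\bpoly{q}\bc{\telement{n}} \times \cpoly{p}\bc{S^1}$; by contrast, $\vec{Z}_x$ is generically discontinuous across spatial element boundaries and is therefore not an admissible test function. This is precisely the role of the projection $\project{\vec{Z}_x}$, and the residual $W$ is what compensates for the resulting consistency error in the momentum law.

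For the momentum law, the plan is to test \eqref{eqn:stlfem} against $\psi = \project{\vec{Z}_x}$. The $\nabla \S{\vec{Z}}$ contribution is $\stlint W \di{x} \di{t}$ by definition of $W$. The $L$-contribution vanishes: applying projection orthogonality $\stlint (\vec{Z}_x - \project{\vec{Z}_x}) \cdot \phi \di{x} \di{t} = 0$ with $\phi = L \project{\vec{Z}_x}$ (which lies in the projection space since $L$ is constant) reduces this contribution to $\stlint \project{\vec{Z}_x} \cdot L \project{\vec{Z}_x} \di{x} \di{t}$, which vanishes by skew-symmetry of $L$. For the $K$-contribution I apply the same orthogonality with $\phi = K \vec{Z}_t$ (admissible since $\vec{Z}_t$ is in the test space) to obtain $\stlint K \vec{Z}_t \cdot \project{\vec{Z}_x} \di{x} \di{t} = \stlint K \vec{Z}_t \cdot \vec{Z}_x \di{x} \di{t}$. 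Assembling these identities, the scheme yields $\stlint K \vec{Z}_t \cdot \vec{Z}_x \di{x} \di{t} = \stlint W \di{x} \di{t}$. It then remains to expand $\dmdensity_t + (\dmflux + \S{\vec{Z}})_x$ using the product rule: the cross terms $\tfrac12 \vec{Z}_x \cdot K \vec{Z}_t + \tfrac12 \vec{Z}_t \cdot K \vec{Z}_x$ cancel by skew-symmetry of $K$, leaving a term proportional to $\vec{Z}_{xt} \cdot K \vec{Z}$; a spatial integration by parts exploiting periodicity of $S^1$ converts this into $\stlint \vec{Z}_x \cdot K \vec{Z}_t \di{x} \di{t}$, closing the argument.

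For the energy law the argument is more direct, because $\vec{Z}_t$ lies in the test space and no projection is needed. Testing \eqref{eqn:stlfem} against $\psi = \vec{Z}_t$, the $K \vec{Z}_t \cdot \vec{Z}_t$ term vanishes pointwise by skew-symmetry of $K$, and $\nabla \S{\vec{Z}} \cdot \vec{Z}_t = \S{\vec{Z}}_t$ by the chain rule; the scheme therefore reduces to $\stlint \vec{Z}_t \cdot L \vec{Z}_x \di{x} \di{t} = \stlint \S{\vec{Z}}_t \di{x} \di{t}$. Expanding $\dedensity_t + \deflux_x$ via the product rule, the pair of mixed $L$-terms cancels by skew-symmetry of $L$ and the $\S{\vec{Z}}_t$ term appears with the expected sign; one spatial integration by parts, using periodicity, reduces the remainder to exactly the identity just derived, giving vanishing space-time integral.

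The principal obstacle is the momentum case: the spatial non-conformity of $\vec{Z}_x$ forces both the introduction of $\project{\vec{Z}_x}$ and the appearance of $W$, so that momentum is preserved only in the integrated form \eqref{eqn:mclaw} rather than pointwise. The delicate bookkeeping consists in invoking projection orthogonality twice, once to kill the $L$-contribution and once to rewrite the $K$-contribution, and in recognising $W$ as the precise obstruction to a pointwise law; the energy law then follows without any projection argument, since $\vec{Z}_t$ conveniently already inhabits the test space.
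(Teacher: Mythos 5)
Your proposal is correct and follows essentially the same route as the paper's proof: both obtain the key identities by testing the local scheme \eqref{eqn:stlfem} with $\psi = \project{\vec{Z}_x}$ (using $L_2$-projection orthogonality against $K\vec{Z}_t$ and against $L\project{\vec{Z}_x}$, the latter killed by skew-symmetry of $L$) and with $\psi = \vec{Z}_t$, and then expand the densities and fluxes by the product rule, cancelling the mixed terms by skew-symmetry and disposing of the remaining perfect spatial derivative via periodicity. The only differences are presentational — you phrase the last step as a spatial integration by parts where the paper invokes the fundamental theorem of calculus on the flux term, and both treatments inherit the same sign ambiguity from the ordering $\vec{Z}_x \cdot L\vec{Z}$ versus $\vec{Z}\cdot L\vec{Z}_x$ in \eqref{eqn:fluxdensity} relative to the continuous definition \eqref{eqn:energystrong}.
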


\begin{remark}[Initial remarks on Theorem \ref{thm:claws}]

  $W$ as constructed in \eqref{eqn:w} is a discrete
  approximation of $\S{\vec{Z}}$, which is designed to allow us to
  apply the finite element approximation to the spatial momentum flux
  contribution in a way which mimics the continuous arguments. This
  construction is required as $\vec{Z}_x$ is not in the test space due
  to the Petrov-Galerkin nature of the formulation in time and the
  global continuity of the spatial finite element space.

  As our method is constructed under spatial and temporal integrals,
  the natural discrete conservation laws must also be defined under
  the integral. This weakens the notion of a conservation law. This
  may be strengthened slightly when considering a spatially
  discontinuous approximation by explicitly localising these laws to
  single elements in space-time, as will be seen in Section
  \ref{sec:dg}.

\end{remark}

\begin{remark}[The consistent conserved momentum described in Theorem
  \ref{thm:claws}]

  We note that the consistent discrete momentum conservation
  \eqref{eqn:mclaw} law does not fall concisely within the expected
  format for a conservation law. The lack of conservation here is
  caused by the typically nonlinear term $\nabla\S{\vec{Z}}$ and the
  fact that $Z_x$ does not live within the finite element space
  $\dpoly{q}\bc{\telement{n}} \times \cpoly{p}\bc{S^1}$. On a case by
  case basis, the conservation of momentum may be examined for
  particular examples. We shall see that in situations where schemes
  have been constructed in the literature which preserve both
  invariants, we also may expect conservation of both momentum and
  energy, for example for the linear wave equation, we do indeed
  obtain it. For brevity we shall not present proofs on a case by case
  basis here instead presenting select numerical simulations.

  In cases where we do not obtain a momentum conservation law through
  alternate means, we note that the deviation in momentum may be
  quantified locally by
  \begin{equation}
    \stlint \S{\vec{Z}}_x - \nabla\S{\vec{Z}} \cdot
    \project{\vec{Z}_x}
    \di{x} \di{t}
    ,
  \end{equation}
  which may be written equivalently as
  \begin{equation}
    \stlint \nabla\S{\vec{Z}} \cdot \bc{
      \vec{Z}_x - \project{\vec{Z}_x}}
    \di{x}\di{t}
    ,
  \end{equation}
  so the deviation in momentum is quantified by the difference between
  $\vec{Z}_x$ and its projection into the finite element space. This
  may be thought of as a quantification of how discontinuous the
  spatial derivative of $\vec{Z}$ is. Experimentally, when simulating
  smooth solutions which are not highly oscillatory we expect this
  deviation to be small.
  
\end{remark}

\begin{proof}[Proof of Theorem \ref{thm:claws}]
  
  Through the definition of the momentum density $\dmdensity$ given in
  \eqref{eqn:fluxdensity} we can write
  \begin{equation} \label{p:mclaw0}
    \begin{split}
      \stlint {\dmdensity}_t \di{x}\di{t}
      & =
      \stlint  \frac12 \vec{Z}_{xt} \cdot K \vec{Z} 
      \ \revise{+} \ \frac12 \vec{Z}_x \cdot K \vec{Z}_t \
      \revise{\di{x}\di{t}} \\
      & \ \revise{=
        \stlint \vec{Z}_x \cdot K \vec{Z}_t \di{x}\di{t}  
        ,
      }
    \end{split}
  \end{equation}
  \revise{in view of the skew-symmetric inner product induced by
    $K$ and integration by parts. }
  Further, through the definition of the momentum flux $\dmflux$ in
  \eqref{eqn:fluxdensity} we find
  \begin{equation} \label{p:mclaw1}
    \begin{split}
      \stlint \bc{{\dmflux} + \S{\vec{Z}}}_x - W \di{x} \di{t}
      & =
      \stlint
      - \nabla \S{\vec{Z}} \cdot \project{\vec{Z}_x}
      \di{x} \di{t}
    \end{split}
  \end{equation}
  \revise{in view of the periodic spatial boundary.} We observe
  that $\vec{Z}_x$ is not in the test space. With this in mind we
  choose $\psi = \project{\vec{Z}_x}$ in the temporally local scheme
  \eqref{eqn:stlfem} allowing us to write
  \begin{equation} \label{p:mclaw2}
    \begin{split}
      0 & =
      \stlint
      K \vec{Z}_t \cdot \project{\vec{Z}_x} 
      + L \vec{Z}_x \cdot \project{\vec{Z}_x}
      - \nabla \S{\vec{Z}} \cdot \project{\vec{Z}_x} \di{x} \di{t}
      \\
      & = 
      \stlint
      K \vec{Z}_t \cdot \vec{Z}_x
      + L \project{\vec{Z}_x} \cdot \project{\vec{Z}_x}
      - \nabla \S{\vec{Z}} \cdot \project{\vec{Z}_x} \di{x} \di{t}
      \\
      & = 
      \stlint
      K \vec{Z}_t \cdot \vec{Z}_x
      - \nabla \S{\vec{Z}} \cdot \project{\vec{Z}_x} \di{x} \di{t}
      ,
    \end{split}
  \end{equation}
  through the definition of the $L_2$ projection into the finite
  element space $\dpoly{q}\bc{\telement{n}} \times \cpoly{p}\bc{S^1}$
  and the skew symmetry of the matrix $L$. \revise{ Indeed, that
    $K\vec{Z}_t \in \dpoly{q}\bc{\telement{n}} \times
    \cpoly{p}\bc{S^1}$ is crucial to reach second line of
    \eqref{p:mclaw2} as it allows us to exploit the definition of the
    $L_2$ projection and is fundamental to the design of our finite
    element discretisation.\footnote{\revise{In the second line of
        \eqref{p:mclaw2} we have used that
        $K\vec{Z}_t, \project{\vec{Z}_x} \in
        \dpoly{q}\bc{\telement{n}} \times \cpoly{p}\bc{S^1}$ to choose
      each as a test function in the definition of the $L_2$
      projection, i.e.,
      \begin{equation}
        \stlint \bc{Z_x - \project{Z_x}} \cdot K\vec{Z}_t \di{x}\di{t}
        =
        0
        \qquad
        \text{and}
        \qquad
        \stlint \bc{L\vec{Z}_x - L\project{\vec{Z}_x}} \cdot
        \project{\vec{Z}_x}
        \di{x}\di{t}
        =
        0
        .
      \end{equation}
    }}}
Applying \eqref{p:mclaw2} to
  \eqref{p:mclaw1} we have
  \begin{equation} \label{p:mclaw3}
    \begin{split}
      \stlint \bc{{\dmflux} + \S{\vec{Z}}}_x - W \di{x} \di{t}
      & =
      \ \revise{\stlint - K\vec{Z}_t \cdot \vec{Z}_x \di{x}\di{t}
        }
      .
    \end{split}
  \end{equation}
  Summing \eqref{p:mclaw3} with \eqref{p:mclaw0} we obtain the
  momentum conservation law \eqref{eqn:mclaw}.

  Turning our attention to the proof of local energy conservation, we
  apply the definitions of the energy density
  \eqref{eqn:fluxdensity} finding
  \begin{equation}
    \stlint \dedensity_t \di{x}\di{t}
    =
    \stlint \frac12 \vec{Z}_{xt} \cdot L \vec{Z}
    + \frac12 \vec{Z}_x\cdot L \vec{Z}_t
    - \vec{Z}_t \cdot \nabla \S{\vec{Z}} \di{x}\di{t}
    .
  \end{equation}
  Through choosing $\psi = \vec{Z}_t$ in the temporally local
  scheme \eqref{eqn:stlfem} we can write
  \begin{equation} \label{p:leclaw1}
    \stlint \dedensity_t \di{x}\di{t}
    =
    \stlint \frac12 \vec{Z}_{xt} \cdot L \vec{Z}
    + \frac12 \vec{Z}_x \cdot L \vec{Z}_t
    + K\vec{Z}_t \cdot \vec{Z}_t
    - L\vec{Z}_x \cdot \vec{Z}_t
    \di{x}\di{t}
    .
  \end{equation}
  Through the skew-symmetric inner products induced by $K$ and $L$ we
  can reduce \eqref{p:leclaw1} to
  \begin{equation} \label{p:leclaw2}
    \begin{split}
      \stlint \dedensity_t \di{x}\di{t}
      & =
      0
      .
    \end{split}
  \end{equation}
  Additionally, through the definition of the flux $\deflux$ we observe
  that
  \begin{equation} \label{p:leclaw3}
    \begin{split}
      \stlint \deflux_x \di{x}\di{t}
      & = 
      0
    \end{split}
  \end{equation}
  utilising the fundamental theorem of calculus and the periodic
  boundary conditions. \revise{As, under the spatial integral, the
    energy density and flux are both zero, clearly their sum will be
    zero allowing us to conclude. Note that while for the energy law
    there was no interplay between the density and flux terms of the
    energy in the proof this was not the case for the momentum
    conservation law.}
  
\end{proof}

\begin{remark}[Theorem \ref{thm:claws} in the semi-discrete
  setting] \label{rem:semiclaws}

  While Theorem \ref{thm:claws} is posed in the fully discrete
  setting, the proofs of similar results in either the spatially or
  temporally semi-discrete setting may be obtained through following
  similar arguments. Indeed, if we allow time to be continuous we
  obtain the conservation laws
  \begin{equation} \label{eqn:clawsspace}
    \begin{split}
    \slint {\dmdensity}_t + \bc{\dmflux + \S{\vec{Z}}}_x - W \di{x}
    & = 0 \\
    \slint \dedensity_t + \deflux_x \di{x} & = 0
    ,
    \end{split}
  \end{equation}
  as a point-wise result in time where $\vec{Z}$ is discrete in space
  and continuous in time. Furthermore, if we allow space to be
  continuous and time to be discrete we obtain the temporally local
  conservation laws
  \begin{equation} \label{eqn:clawstime}
    \begin{split}
      \tint {\dmdensity}_t + \dmflux_x \di{t} & = 0 \\
      \tint \dedensity_t + \deflux_x  \di{t} & = 0
      ,
    \end{split}
  \end{equation}
  which are point-wise in space. Note that if we are spatially
  continuous we may \emph{exactly} preserve both the momentum and
  energy conservation laws.

  While the temporal and spatial conservation laws here are deeply
  intertwined, we note that, similarly to \cite{McLachlanQuispel:2014},
  if we were to couple our temporal discretisation with an
  alternative spatial discretisation which preserves conservation laws
  in the semi-discrete setting we would obtain an alternate
  conservative discretisation in the fully discrete setting.

\end{remark}

\begin{remark}[An alternate scheme with an exact momentum conservation
  law for the nonlinear wave equation] \label{rem:mclaw}

  For the nonlinear wave equation described in Example \ref{ex:nlw},
  perturbing the finite element scheme described in Definition
  \ref{def:stfem} preserves a natural discrete momentum. In
  particular, this modified scheme is given by seeking
  $\vec{Z} \in \cpoly{q+1}\bc{\bs{0,T}} \times \dpoly{p}\bc{S^1}$ such
  that
  \begin{equation} \label{eqn:mstfem}
    \begin{split}
      \stint 
      K\vec{Z}_t \cdot \phi + L\vec{Z}_x \cdot \phi
      - \cproject{\nabla \S{\vec{Z}}} \cdot \phi
      \di{x} \di{t} 
      & = 0 
      \qquad
      \forall \phi \in \dpoly{q}\bc{\bs{0,T}} \times \dpoly{p}\bc{S^1}
      \\
      \vec{Z}(0,x) & = \icproject{ \vec{z}(0,x) }
      ,
    \end{split}
  \end{equation}
  where $\icproject{}$ represents the $L_2$ projection into
  $\dpoly{p}(S^1)$ and $\cproject{}$
  the $L_2$ projection into $\cpoly{q+1}{\bc{[0,T]}} \times
  \dpoly{q}\bc{S^1}$. This discretisation has been modified through
  projecting the non-linearity of the PDE into the finite element trial
  space. Such a discretisation may be viewed as the introduction of
  additional auxiliary variables which ensure that $\nabla\S{\vec{Z}}$
  is within the finite element space. 

  For the nonlinear wave equation, this discretisation preserves the
  momentum conservation law
  \begin{equation} \label{eqn:mmclaw}
    \stlint \dmdensity_t + \dmflux_x \di{x}\di{t}
    =
    0
    ,
  \end{equation}
  where $\dmdensity,\dmflux$ are given by \eqref{eqn:fluxdensity}. The
  energy law is no longer conserved when $\nabla\S{\vec{Z}}$ is
  nonlinear, however, the consistent energy law
  \begin{equation}
    \stlint \bc{\dedensity - \S{\vec{Z}}}_t + \cproject{\nabla\S{\vec{Z}}}
    Z_t + \deflux_x \di{x} \di{t} = 0
    ,
    \end{equation}
    where $\cproject{}$ is the $L_2$ projection into
    $\cpoly{q+1}\bc{\telement{n}} \times \dpoly{p}\bc{S^1}$, is
    preserved.

\end{remark}

\subsection{Existence and uniqueness} \label{sec:cg:eu}

Due to skew-symmetric nature of the multisymplectic PDE
\eqref{eqn:multisym} and the possible sparsity of the operators $K,L$
it is not possible to prove existence and uniqueness on the continuous
level for a generic problem of this type. Although, in practice for
physically motivated examples such results are expected, as a physical
PDE typically has a dense non-skew-symmetric formulation, for example
the wave equation. With this in mind, when considering the existence
and uniqueness of the approximation \eqref{eqn:stfem} we shall focus
on the wave equation described in Example \ref{ex:nlw} under the
assumption that $\V{u} \ge 0$ for all $u$. We note that this includes
the linear wave equation, i.e., when $\V{u}=0$.

As our problem is fully discrete, it is possible to prove existence
and uniqueness of the underlying nonlinear system of equations, and
this is typically the approach in the literature, see for example
\cite{KarakashianMakridakis:1999}. For brevity, we shall instead
utilise the temporally conforming nature of the space-time
discretisation to show existence of solutions through the
Picard-Lindel\"of theorem. Throughout we shall consider existence and
uniqueness \emph{only} over the arbitrary temporal interval
$[t_n,t_{n+1})$, which may be naturally extended globally through the
global continuity of the discrete solution.

For the wave equation, we may explicitly write our finite element
discretisation over an arbitrary temporal element as seeking
$U,V,W \in \cpoly{q+1}\bc{\telement{n}} \times \cpoly{p}\bc{S^1}$ such
that
\begin{equation} \label{eqn:stwave}
  \begin{split}
    \stlint
    V_t\phi - W_x\phi + \V{U}\phi \di{x}\di{t}& = 0 \\
    \stlint
    V \psi - U_t \psi \di{x}\di{t} & = 0 \\
    \stlint
    W \chi - U_x \chi \di{x}\di{t} & = 0
    ,
  \end{split}
\end{equation}
$\forall \phi,\psi,\chi \in \dpoly{q}\bc{\telement{n}} \times
\cpoly{p}\bc{S^1}$ where $U(t_n),V(t_n),W(t_n)$ is enforced by either
temporal continuity or the initial data.

Our space-time finite element discretisation \eqref{eqn:stwave}
exactly preserves the energy density under a temporally local
integral, i.e.,
\begin{equation}
  \begin{split}
    0 & = \stlint {\dedensity}_t \di{x}\di{t} = \stlint \bc{ \frac12 \vec{Z} \cdot L \vec{Z}_x -
      \S{\vec{Z}} }_t \di{x} \di{t}
    =
    \stlint \bc{ \frac12 W_x U - \frac12 U_xW - \frac12 V^2 + \frac12 W^2 - \V{U} }_t
    \di{x} \di{t} \\
    & =
    \stlint \bc{ - \frac12 V^2 - \frac 12 \sproject{U_x}^2 - \V{U}
    }_t \di{x} \di{t}
    ,
  \end{split}
\end{equation}
where $\sproject{}$ is the $L_2$ projection into the \emph{spatial}
finite element space, as
\begin{equation}
  \stlint \bc{W-U_x} W_t \di{x}\di{t} = 0
  \qquad
  \textrm{and}
  \qquad
  \stlint \bc{W-U_x}\sproject{U_{x}}_t\di{x}\di{t}=0
  ,
\end{equation}
which follow from \eqref{eqn:stwave}. Crucially, we observe that
conservation of this energy over time yields stability of the
discretisation over time, i.e., that assuming $U_x, \V{U}, V$ are
finite at $t=0$ we have that
\begin{equation} \label{eqn:cg:bounded}
  \begin{split}
    \norm{L_2(S^1)}{V(t_n)}^2 \le C \\
    \norm{L_2(S^1)}{\sproject{U_x(t_n)}}^2 \le C \\
    \norm{\V{S^1}}{U(t_n)} \le C
    ,
  \end{split}
\end{equation}
where
\begin{equation}
  C:= \norm{L_2(S^1)}{V(0)}^2 + \norm{L_2(S^1)}{U_x(0)}^2
  + \norm{\V{S^1}}{U(0)}
  .
\end{equation}
Here $\norm{\V{S^1}}{\cdot}$ represents the norm induced by
$\V{\cdot}$, and in particular
\begin{equation}
  \norm{\V{S^1}}{U(t)} := \int_{S^1} \V{U(t)} \di{x}
  .
\end{equation}
Note that if $\V{U}=0$ then the third inequality of
\eqref{eqn:cg:bounded} does not provide us with any information,
however we may still obtain stability of $U$ through inverse estimates
\cite{Thomee:2006}. Furthermore, spatially the auxiliary variable $W$
is equivalent to the $L_2$ projection of $U_x$, which allows us to
reduce \eqref{eqn:stwave} to write
\begin{equation} \label{eqn:stwavered}
  \begin{split}
    \stlint
    V_t\phi - \sproject{U_x}_x\phi + \V{U}\phi \di{x}\di{t}& = 0 \\
    \stlint
    V \psi - U_t \psi \di{x}\di{t} & = 0
    ,
  \end{split}
\end{equation}
where $\sproject{}$ is the $L_2$ projection into the spatial
finite element space. Indeed, through applying the definition of the
$L_2$ projection to all terms of
\eqref{eqn:stwavered} which are not within the test space we
may interpret this space-time
approximation as the point-wise finite dimensional ODE system
\begin{equation} \label{p:eucg1}
  \begin{split}
    U_t & = \project{V} \\
    V_t & = \project{\sproject{U_x}_x - \V{U}}
    ,
  \end{split}
\end{equation}
where $\project{}$ represents the $L_2$ projection into the space-time
finite element space
$\dpoly{q}\bc{\telement{n}}\times\cpoly{p}(S^1)$. Through inverse
estimates and the stability of the $L_2$ projection we see that the
right hand side of our ODE is continuous in $U$ and $V$, and through
\eqref{eqn:cg:bounded} the solution remains in a bounded set depending
on the initial data. Additionally, the Jacobian of the right hand side
is a uniformly bounded operator, thus through invoking the
Picard-Lindel\"of theorem we yield a unique solution over $\telement{n}$ which
may be extended globally.

\section{Spatially  discontinuous finite element approximation} \label{sec:dg}
\renewcommand{\gfunc}[1]{%
  \ifthenelse{\isempty{#1}}{\ensuremath{\mathcal{G}}}%
  {\ensuremath{\mathcal{G}\left(#1\right)}}}

We shall now focus our attention on a spatially discontinuous
approximation. Such an approximation allows us to concisely localise
our conservation laws spatially, in addition to the benefits discussed
in Remark \ref{rem:sdg}.

As we consider \emph{discontinuous} function spaces we require
the following additional definitions to concisely describe the
method about points with multiple values.

\begin{definition}[Jumps and averages] \label{def:jumpavg}

  Due to the discontinuous nature of the finite element space finite
  element functions are permitted to be multi-valued at the vertices of
  the elements. With this in mind we introduce notation
  \begin{equation} \label{eqn:dg:xlim}
    \vec{U}_m^+ := \vec{U}(x_m^+) := \lim_{x \searrow x_m} \vec{U}(x),
    \qquad
    \vec{U}_m^- := \vec{U}(x_m^-) := \lim_{x \nearrow x_m} \vec{U}(x),
  \end{equation}
  to describe the values of the function on the right and left of the
  discontinuity \revise{at $x_m$} respectively. We further define the
  jump of a function at the point $x_m$ to be
  \begin{equation} \label{eqn:dg:jump}
    \jump{\vec{U}_m} = \vec{U}_m^- - \vec{U}_m^+
  \end{equation} 
  and the average as
  \begin{equation} \label{eqn:dg:avg}
    \avg{\vec{U}_m} = \frac12 \left( \vec{U}_m^- + \vec{U}_m^+ \right).
  \end{equation}
  
\end{definition}

\begin{definition}[Discrete operator for first spatial
  derivatives] \label{def:gfunc}

  Let $\vec{U} \in \dpoly{p}\bc{S^1}$, then $\gfunc{} : \dpoly{p}\bc{S^1} \to \dpoly{p}\bc{S^1}$
  such that
  \begin{equation} \label{eqn:dg:gfunc}
    \int_{S^1} \gfunc{\vec{U}} \cdot \phi  \di{x}
    =
    \sum_{m=0}^{M-1} \int_{\selement{m}} \vec{U}_x \cdot \phi  \di{x}
    - \sum_{m=0}^{M-1} \jump{\vec{U}_m} \cdot \avg{\phi_m}
    \qquad
    \forall \phi \in \dpoly{p}\bc{S^1}
    .
  \end{equation}

\end{definition}

With this definition in mind, we define the spatially discontinuous
space-time finite element approximation of multisymplectic PDEs as
follows.

\begin{definition}[Spatially discontinuous space-time finite element
  method] \label{def:stfemdg}

  Seek $\vec{Z} \in \cpoly{q+1}\bc{\bs{0,T}} \times \dpoly{p}\bc{S^1}$ such that
  \begin{equation} \label{eqn:stfemdg}
    \begin{split}
      \stint 
      K\vec{Z}_t \cdot \phi + L\gfunc{\vec{Z}} \cdot \phi
      - \nabla \S{\vec{Z}} \cdot \phi
      \di{x} \di{t} 
      & = 0 
      \qquad
      \forall \phi \in \dpoly{q}\bc{\bs{0,T}} \times \dpoly{p}\bc{S^1}
      \\
      \vec{Z}(0,x) & = \icproject{ \vec{z}(0,x) }
      ,
    \end{split}
  \end{equation}
  where $\icproject{}$ represents the $L_2$ projection into
  $\dpoly{p}(S^1)$.
\end{definition}

\begin{remark}[Localisation of the spatially discontinuous approximation]

  Due to the local nature of the test functions in both space and
  time, the finite element method proposed in Definition
  \ref{def:stfemdg} may be written locally as
    \begin{equation} \label{eqn:dg:lsltfem}
    \begin{split}
      \sltlint 
      K\vec{Z}_t \cdot \chi + L\gfunc{\vec{Z}} \cdot \chi
      - \nabla \S{\vec{Z}} \cdot \chi
      \di{x} \di{t} 
      & = 0 
      \qquad
      \forall \chi \in
      \dpoly{q}\bc{\telement{n}} \times \dpoly{p}\bc{\selement{m}}
      \\
      \vec{Z}(0,x) & = \icproject{ \vec{z}(0,x) }
      .
    \end{split}
  \end{equation}
  Note that, while this method may be formulated locally in space, it
  must be implemented globally due to the global nature of the
  operator $\gfunc{}$. For readers more familiar with finite
  difference methods, this localisation may be thought of similarly to
  defining a finite difference method over its stencil. That is to say
  that the method may be uniquely defined over its stencil but must be
  implemented globally.

\end{remark}

\begin{remark}[Connection between the spatially discontinuous
  approximation and local discontinuous Galerkin
  methods] \label{rem:ldg}

  Readers well versed in local discontinuous Galerkin \emph{(ldG)}
  methods, \cite{CockburnShu:1998, CastilloEtAl:2000, XuShu:2005}, may
  spot similarities with our spatially discontinuous approximation
  \eqref{def:stfemdg}. Indeed, multisymplectic PDEs may often be
  viewed the reformulation of a PDE as a first order system through
  introducing first derivatives as auxiliary variables, for example in
  Example \ref{ex:nlw}. However, this is not always the
  case. Additionally, ldG methods are formulated in terms of
  \emph{local} spatial derivatives with either upwind or downwind
  fluxes. Here our spatial derivative is a global operator, as when
  restricted to a single element it depends on both upwind and
  downwind information. Here we do obtain one of the core benefits of
  ldG methods: For auxiliary variables of first spatial derivatives,
  we may reduce the computational complexity of our implementations
  through rewriting them in primal form as discussed in Remark
  \ref{rem:sdg}.

\end{remark}

Before examining the conservation laws preserved by the
multisymplectic form we must understand the properties of the discrete
spatial operator $\gfunc{}$, as these will be key in investigating
discrete local conservation laws.

\begin{proposition}[Properties of $\gfunc{}$] \label{prop:gfunc}
  
  Let $\vec{U},\vec{V} \in \dpoly{p}\bc{S^1}$, and $\gfunc{}$ be as given in
  Definition \ref{def:gfunc}, then $\gfunc{}$ is orthogonal to
  constants in the sense that
  \begin{equation} \label{eqn:dg:gorth}
    \int_{S^1} \gfunc{\vec{U}} \cdot \vec{1} \di{x} = 0 
    .
  \end{equation}
  Additionally it respects the skew-symmetric identity
  \begin{equation} \label{eqn:dg:gskew}
    \int_{S^1} \gfunc{\vec{U}} \cdot \vec{V} \di{x}
    =
    - \int_{S^1} \vec{U} \cdot \gfunc{\vec{V}} \di{x}
    ,
  \end{equation}
  which corresponds to integration by parts. Finally, $\gfunc{}$ also
  respects the product rule
  \begin{equation} \label{eqn:dg:gproduct}
    \int_{S^1} \gfunc{\vec{U} \cdot \vec{V}} \di{x} 
    =
    \int_{S^1} \gfunc{\vec{U}} \cdot \vec{V}
    + \vec{U} \cdot \gfunc{\vec{V}} \di{x}
    .
  \end{equation}
    Locally, we can write these three properties as
    \begin{equation} \label{eqn:dg:lgorth}
      \int_{\selement{m}} \gfunc{\vec{U}} \cdot \vec{1} \di{x}
      \revise{\
        = a_m\bc{\vec{U} \cdot \vec{1}}
        }
      ,
    \end{equation}
  \begin{equation} \label{eqn:dg:lgskew}
    \begin{split}
      \int_{\selement{m}} \gfunc{\vec{U}} \cdot \vec{V} \di{x}
      &\revise{ =
        -\int_{\selement{m}} \vec{U} \cdot \gfunc{\vec{V}} \di{x}
        + b_m\bc{\vec{U},\vec{V}}
      }
    \end{split}
  \end{equation}
  and
  \revise{
    \begin{equation} \label{eqn:dg:lgprod}
      \begin{split}
        \int_{\selement{m}} \gfunc{\vec{U} \cdot \vec{V}} \di{x}
        - a_m\bc{\vec{U} \cdot \vec{V}}
        & =
        \int_{\selement{m}} \gfunc{\vec{U}} \cdot \vec{V} 
        + \vec{U} \cdot \gfunc{\vec{V}}
        \di{x} - b_m\bc{\vec{U},\vec{V}}
        ,
      \end{split}
    \end{equation}
    where we have defined
    \begin{equation} \label{eqn:a}
      a_m\bc{\vec{U} \cdot \vec{V}}
      :=
      \frac12 \avg{\vec{U}_{m+1} \cdot \vec{V}_{m+1}} 
      - \frac12 \avg{\vec{U}_m  \cdot \vec{V}_{m}}
    \end{equation}
    and
    \begin{equation} \label{eqn:b}
      b_m\bc{\vec{U},\vec{V}}
      :=
      \frac12 \bc{ \vec{U}_{m+1}^- \cdot \vec{V}_{m+1}^+
        + \vec{U}_{m+1}^+ \cdot \vec{V}_{m+1}^-}
      - \frac12 \bc{ \vec{U}_m^- \cdot \vec{V}_m^+
        + \vec{U}_m^+ \cdot \vec{V}_m^- }
      .
    \end{equation}
  }

\end{proposition}

\revise{Note that here we have introduced variables $a_m$ and $ b_m$ to
  represent the flux contributions arising in the properties of
  $\gfunc{}$. This is done to simplify exposition in the
  sequel, in particular for the proof of Theorem \ref{thm:clawsdg}.}
\revise{Additionally,} the structure of the input arguments of $\gfunc{}$ is not
always consistent within Proposition \ref{prop:gfunc}, i.e., sometimes
$\gfunc{}$ operates on a vector and sometimes a scalar. We have made
this choice \revise{as $\gfunc{}$ shall act on both in the sequel.}

\begin{proof}

  To prove Proposition \ref{prop:gfunc} it suffices to show the local
  properties. All three global properties follow immediately from the
  globalisation of the local properties in view of periodic boundary
  conditions. To show local properties we shall utilise the definition
  of $\gfunc{}$ as described in Definition \ref{def:gfunc}. We may
  restrict this definition to a single element by choosing
  \begin{equation}
    \phi(x) =
    \begin{cases}
      \chi(x) \quad \forall x \in \selement{m} \\
      0 \qquad \textrm{ otherwise,}      
    \end{cases}
  \end{equation}
  for all $\chi \in \dpoly{q}\bc{\selement{m}}$, and can be explicitly
  written as
  \begin{equation} \label{eqn:dg:lgfunc}
    \int_{\selement{m}} \gfunc{\vec{U}} \cdot \chi \di{x}
    =
    \int_{\selement{m}} \vec{U}_x \cdot \chi \di{x}
    - \frac12 \jump{\vec{U}_m} \cdot \chi_m^+
    - \frac12 \jump{\vec{U}_{m+1}} \cdot \chi_{m+1}^-
    \qquad
    \forall \chi \in \dpoly{q}\bc{\selement{m}}
    .
  \end{equation}
  
  Through \eqref{eqn:dg:lgfunc} we observe that
  \begin{equation}
    \begin{split}
      \int_{\selement{m}} \gfunc{\vec{U}} \cdot \vec{1} \di{x}
      & =
      \int_{\selement{m}} \vec{U}_x \cdot \vec{1} \di{x}
      - \frac12 \jump{\vec{U}_m} \cdot\vec{1}
      - \frac12 \jump{\vec{U}_{m+1}} \cdot \vec{1}
      \\
      & =
      \bc{\vec{U}_{m+1}^- - \vec{U}_m^+} \cdot \vec{1}
      - \frac12 \bc{\vec{U}_m^--\vec{U}_m^+} \cdot \vec{1}
      - \frac12 \bc{\vec{U}_{m+1}^--\vec{U}_{m+1}^+} \cdot \vec{1}
      \\
      & = 
      \avg{\vec{U}_{m+1}} \cdot \vec{1} 
      -\avg{\vec{U}_m} \cdot \vec{1}
      ,
    \end{split}
  \end{equation}
  through the fundamental theorem of calculus and Definition
  \ref{def:jumpavg} yielding \eqref{eqn:dg:lgorth}. Again, in view of
  \eqref{eqn:dg:lgfunc}
  \begin{equation}
    \begin{split}
      \int_{\selement{m}} \gfunc{\vec{U}} \cdot \vec{V} \di{x}
      & = 
      \int_{\selement{m}} \vec{U}_x \cdot \vec{V} \di{x}
      - \frac12 \jump{\vec{U}_m} \cdot \vec{V}_m^+
      - \frac12 \jump{\vec{U}_{m+1}} \cdot \vec{V}_{m+1}^-
      \\
      & = 
      - \int_{\selement{m}} \vec{U} \cdot \vec{V}_x \di{x}
      + \frac12 \vec{U}_{m+1}^- \cdot \vec{V}_{m+1}^-
      - \frac12 \vec{U}_m^+ \cdot \vec{V}_m^+
      \\
      & \qquad
      + \frac12 \vec{U}_{m+1}^+ \cdot \vec{V}_{m+1}^-
      - \frac12 \vec{U}_m^- \cdot \vec{V}_m^+
      \\
      & = 
      - \int_{\selement{m}} \vec{U} \cdot \gfunc{\vec{V}} \di{x}
      + \frac12 \vec{U}_{m+1}^- \cdot \vec{V}_{m+1}^+
      + \frac12 \vec{U}_{m+1}^+ \cdot \vec{V}_{m+1}^-
      \\
      & \qquad
      - \frac12 \vec{U}_m^- \cdot \vec{V}_m^+
      - \frac12 \vec{U}_m^+ \cdot \vec{V}_m^-
      ,
    \end{split}
  \end{equation}
  through integration by parts and application of Definition
  \ref{def:jumpavg} yielding \eqref{eqn:dg:lgskew}.

  Note that as \eqref{eqn:dg:lgorth} holds for
  vectors it trivially holds for \revise{any scalar $w$ and
    \begin{equation}
      \int_{\selement{m}} \gfunc{w}
      =
      \frac12 \avg{w_{m+1}}
      -
      \frac12 \avg{w_m}
      .
    \end{equation}
    In particular, when $w=\vec{U}\cdot\vec{V}$ we observe that}
  \begin{equation} \label{p:dg:gprop1}
    \int_{\selement{m}} \gfunc{\vec{U} \cdot \vec{V}} \di{x}
    =
    \revise{
      a_m\bc{\vec{U}\cdot\vec{V}}
      ,
      }
    \end{equation}
    \revise{which we shall be utilised in the sequel.}
    Through summing \eqref{p:dg:gprop1} and
  \eqref{eqn:dg:lgskew} we obtain \eqref{eqn:dg:lgprod} allowing us to
  conclude. 

\end{proof}

\begin{theorem}[Discrete conservation laws] \label{thm:clawsdg}

  Let the conditions of Definition \eqref{def:stfem} hold, and in
  particular let $\vec{Z}$ be the solution of the space-time finite
  element method \eqref{eqn:stfemdg}. Additionally define the scalar function
  \begin{equation} \label{eqn:dg:w}
    W :=
    \nabla \S{\vec{Z}} \cdot \project{\gfunc{\vec{Z}}}
    ,
  \end{equation}
  where $\project{}$ is the $L_2$ projection into the finite element
  space $\bc{\bpoly{q}\bc{\telement{n}}}^D \times \cpoly{p}\bc{S^1}$.
  Further define the fluxes $\dmflux,\deflux$ and densities
  $\dmdensity, \dedensity$ such that
  \begin{align} \label{eqn:dg:fluxdensity}
    \dmflux = \frac12 \vec{Z}_t \cdot K\vec{Z} - \S{\vec{Z}}
    \quad & \quad
      \dmdensity = \frac12 \gfunc{\vec{Z}} \cdot K\vec{Z}
    \\
    \deflux = \frac12 \vec{Z}_t \cdot L \vec{Z}
    \quad & \quad
            \dedensity = \frac12 \gfunc{\vec{Z}} \cdot L \vec{Z}
            \ \revise{+} \ \S{\vec{Z}} 
            .
  \end{align}
  The numerical solution $\vec{Z}$ conserves a consistent discrete momentum
  locally in time, namely
  \begin{equation} \label{eqn:dg:mclaw}
    \stlint \dmdensity_t + \gfunc{\dmflux + \S{\vec{Z}}} - W \di{x} \di{t} = 0
    .
  \end{equation}
  Additionally, a discrete energy is conserved locally in time,
  which we can write as
  \begin{equation} \label{eqn:dg:eclaw}
    \stlint \dedensity_t + \gfunc{\deflux} \di{x} \di{t} = 0
    .
  \end{equation}
  We can restrict these conservation laws locally in space allowing us
  to write
  \begin{equation} \label{eqn:dg:lmclaw}
    \begin{split}
      \sltlint \dmdensity_t + \gfunc{\dmflux + \S{\vec{Z}}} - W \di{x} \di{t} 
      & = 
      \revise{\tint
        \frac12 a_m\bc{\vec{Z}\cdot K \vec{Z}}
        + \frac12 b_m\bc{\vec{Z}, K \vec{Z}}
          \di{t}}
      ,
  \end{split}
  \end{equation}
  and
  \begin{equation} \label{eqn:dg:leclaw}
    \begin{split}
      \sltlint \dedensity_t + \gfunc{\deflux} \di{x} \di{t} & =
      \revise{\tint
        \frac12 a_m\bc{\vec{Z}\cdot L \vec{Z}}
        + \frac12 b_m\bc{\vec{Z}, L\vec{Z}}
        \di{t}
      }
    \end{split} 
  \end{equation}
  respectively, \revise{where $a_m$ and $b_m$ are the flux
    contributions defined in \eqref{eqn:a} and \eqref{eqn:b}.}

\end{theorem}

\begin{proof}[Proof of Theorem \ref{thm:clawsdg}]

  \revise{Before beginning we note that the arguments in this proof
    follow a similar structure to that of the proof of Theorem
    \ref{thm:claws}, but requires us to overcome additional
    technical details.} Through the definition of the momentum density
  $\dmdensity$ given in \eqref{eqn:dg:fluxdensity} we can write
  \begin{equation} \label{p:dg:mclaw00}
    \begin{split}
      \sltlint \dmdensity_t \di{x}\di{t}
      & =
      \sltlint \frac12 \gfunc{\vec{Z}}_t \cdot K \vec{Z} 
      + \frac12 \gfunc{Z} \cdot K \vec{Z}_t
      \revise{\di{x} \di{t}}
      .
    \end{split}
  \end{equation}
  \revise{Through local integration by parts \eqref{eqn:dg:lgskew} and
    the skew-symmetry of $K$ we may rewrite \eqref{p:dg:mclaw00} as
    \begin{equation} \label{p:dg:mclaw0}
      \begin{split}
        \sltlint \dmdensity_t \di{x} \di{t}
        & =
        \sltlint
        -\frac12 \vec{Z}_t \cdot K \gfunc{\vec{Z}}
        + \frac12 \gfunc{\vec{Z}} \cdot K \vec{Z}_t
        \di{x}\di{t}
        +
        \tint \frac12 b_m\bc{\vec{Z}_t,K\vec{Z}} \di{t} \\
        & =
        \sltlint
        \gfunc{\vec{Z}} \cdot K \vec{Z}_t
        \di{x}\di{t}
        + \tint
        \frac12 b_m\bc{\vec{Z}_t,K\vec{Z}}
        \di{t}
        .
      \end{split}
    \end{equation}
    Note that here we have used that
    $\gfunc{K\vec{Z}} = K \gfunc{\vec{Z}}$, which holds as $K$ is a
    \emph{constant} skew-symmetric matrix.}
  Further, through the
  definition of the momentum flux $\dmflux$ in
  \eqref{eqn:dg:fluxdensity} we find
  \begin{equation} \label{p:dg:mclaw1}
    \begin{split}
      \sltlint \gfunc{{\dmflux} + \S{\vec{Z}}} - W \di{x} \di{t}
      & =
      \sltlint
      - \nabla \S{\vec{Z}} \cdot \project{\gfunc{\vec{Z}}}
      \di{x} \di{t}
      \ \revise{
        +\tint \frac12 a_m\bc{\vec{Z}_t \cdot K \vec{Z}}
        \di{t}
        }
      ,
    \end{split}
  \end{equation}
  \revise{after simplification, application of \eqref{p:dg:gprop1}
  and by construction of $W$ \eqref{eqn:dg:w}.} \revise{Note that
  $a_m$ and $b_m$ represent flux contributions as defined in
    Proposition \ref{prop:gfunc}.} We observe that, due to a
  difference in the temporal component of the trial and test spaces,
  $\gfunc{\vec{Z}}$ is not in the test space. With this in mind we
  choose $\chi = \project{\gfunc{\vec{Z}}}$ in our local scheme
  \eqref{eqn:dg:lsltfem}, that
  \begin{equation} \label{p:dg:mclaw2}
    \begin{split}
      0 & =
      \sltlint
      K \vec{Z}_t \cdot \project{\gfunc{\vec{Z}}} 
      + L \gfunc{\vec{Z}} \cdot \project{\gfunc{\vec{Z}}}
      - \nabla \S{\vec{Z}} \cdot \project{\gfunc{\vec{Z}}} \di{x} \di{t}
      \\
      & = 
      \sltlint
      K \vec{Z}_t \cdot \gfunc{\vec{Z}}
      + L \project{\gfunc{\vec{Z}}} \cdot \project{\gfunc{\vec{Z}}}
      - \nabla \S{\vec{Z}} \cdot \project{\gfunc{\vec{Z}}} \di{x} \di{t}
      \\
      & = 
      \sltlint
      K \vec{Z}_t \cdot \gfunc{\vec{Z}}
      - \nabla \S{\vec{Z}} \cdot \project{\gfunc{\vec{Z}}} \di{x} \di{t}
      ,
    \end{split}
  \end{equation}
  through the definition of the $L_2$ projection into the finite
  element space
  $\bpoly{q}\bc{\selement{m}} \times \bpoly{p}\bc{\telement{n}}$ and
  the skew symmetry of the matrix $L$. Applying \eqref{p:dg:mclaw2} to
  \eqref{p:dg:mclaw1} we can write
  \begin{equation} \label{p:dg:mclaw3}
    \begin{split}
      \sltlint \gfunc{{\dmflux} + \S{\vec{Z}}} - W \di{x} \di{t}
      & =
      \revise{ \sltlint - \gfunc{\vec{Z}} \cdot K \vec{Z}_t \di{x}\di{t}
        + \tint \frac12 a_m\bc{\vec{Z}_t \cdot K\vec{Z}}
        \di{t}
        .
      }
    \end{split}
  \end{equation}
  \revise{Summing \eqref{p:dg:mclaw3} with \eqref{p:dg:mclaw0} we
    obtain the} local momentum conservation law \eqref{eqn:dg:lmclaw}.
  Through summing the \eqref{eqn:dg:lmclaw} for $m=0,...,M-1$ then we
  obtain the spatially global momentum conservation law
  \eqref{eqn:dg:mclaw} in view of the periodic boundary conditions.

  Turning our attention to the proof of local energy conservation, we
  apply the definitions of the energy density
  \eqref{eqn:dg:fluxdensity} finding
  \begin{equation}
    \sltlint \dedensity_t \di{x}\di{t}
    =
    \sltlint \frac12 \gfunc{\vec{Z}}_t \cdot L \vec{Z}
    + \frac12 \gfunc{\vec{Z}} \cdot L \vec{Z}_t
    - \vec{Z}_t \cdot \nabla \S{\vec{Z}} \di{x}\di{t}
    .
  \end{equation}
  Through choosing $\chi = \vec{Z}_t$ in the localised space-time
  scheme \eqref{eqn:dg:lsltfem} we can write
  \begin{equation} \label{p:dg:leclaw1}
    \sltlint \dedensity_t \di{x}\di{t}
    =
    \sltlint \frac12 \gfunc{\vec{Z}}_t \cdot L \vec{Z}
    + \frac12 \gfunc{\vec{Z}} \cdot L \vec{Z}_t
    + K\vec{Z}_t \cdot \vec{Z}_t
    - L\gfunc{\vec{Z}} \cdot \vec{Z}_t
    \di{x}\di{t}
    .
  \end{equation}
  Through the skew-symmetric inner products induced by $K$ and $L$,
  and the skew-symmetry of $\gfunc{}$, we
  can reduce \eqref{p:dg:leclaw1} to
  \begin{equation} \label{p:dg:leclaw2}
    \begin{split}
      \sltlint \dedensity_t \di{x}\di{t}
      & =
      \ \revise{
        \sltlint \frac12 \gfunc{\vec{Z}}_t \cdot L \vec{Z}
        - \frac12 \gfunc{\vec{Z}} \cdot L \vec{Z}_t
        \di{x} \di{t}
        =
        \tint \frac12 b_m\bc{\vec{Z}_t,L\vec{Z}} \di{t}
      }
      .
    \end{split}
  \end{equation}
  Additionally we observe that
  \begin{equation} \label{p:dg:leclaw3}
    \begin{split}
      \sltlint \gfunc{\deflux} \di{x}\di{t}
      & =
      \ \revise{
        \tint
        \frac12 a_m\bc{\vec{Z}_t \cdot L \vec{Z}}
        \di{t}
    }
    ,
    \end{split}
  \end{equation}
  utilising \eqref{eqn:dg:lgorth} from Proposition
  \ref{prop:gfunc}. Summing \eqref{p:dg:leclaw2} and \eqref{p:dg:leclaw3} we
  obtain the local energy conservation law \eqref{eqn:dg:leclaw}. Further
  summing the local result over the spatial domain, and observing the
  periodic boundary conditions, we obtain the global result \eqref{eqn:dg:eclaw}.

\end{proof}

\subsection{Existence and uniqueness} \label{sec:dg:eu}

To show existence and uniqueness of the spatially discontinuous scheme
for wave equations with $\V{u}\ge 0$ we may mimic the arguments made
in Section \ref{sec:cg:eu}, as long as our numerical solution remains
within a bounded set. We again utilise the energy conservation law,
which states that
\begin{equation}
  \begin{split}
    0 & =
    \stlint \bc{ \frac12 \vec{Z} \cdot K \gfunc{\vec{Z}} -
      \S{\vec{Z}}}_t
    \di{x} \di{t}
    =
    \stlint \bc{ \frac12 \gfunc{W}U - \frac12 \gfunc{U}W - \frac12 V^2 +
      \frac12 W^2 - \V{U} }_t
    \di{x}\di{t} \\
    & =
    \stlint \bc{-\frac12 \gfunc{U}^2 - \frac12 V^2 - \V{U}}_t
    \di{x}\di{t}
    ,
  \end{split}
\end{equation}
after noting the skew-symmetry of $\gfunc{}$ and that
\begin{equation}
  \stlint \bc{W - \gfunc{U}} W_t \di{x}\di{t} = 0
  \qquad
  \textrm{and}
  \qquad
  \stlint \bc{W-\gfunc{U}}\gfunc{U}_t \di{x}\di{t} = 0
  .
\end{equation}
This immediately yields boundedness of
$\norm{L_2(S^1)}{\gfunc{U(t_n)}}$, $\norm{L_2(S^1)}{V(t_n)}$ and
$\norm{\V{S^1}}{U(t_n)}$ for arbitrary time $t=t_n$, so after mimicking
the arguments made in Section \ref{sec:cg:eu} we may conclude that a
unique numerical solution exists.

\section{Numerical experiments} \label{sec:numerics}

In this section we illustrate the numerical performance of the
numerical methods designed in Section \ref{sec:cg} and Section
\ref{sec:dg} for select examples through summarising extensive
numerical experiments. The brunt of the computational work here has
been conducted by Firedrake \cite{Firedrake:2017, Firedrake:extruded}
and utilises PETsc solvers \cite{petsc-user-ref}. Throughout we shall
use direct linear solvers and set our nonlinear solver tolerance to be
$10^{-12}$. We employ a Gauss quadrature which is either of high
enough order to be able to integrate exactly, or of order $16$,
whichever criteria is met first.

For each benchmark test we fix the temporal and spatial polynomial
degrees $q$ and $p$, respectively, assume uniform spatial and temporal
element sizes, and compute a sequence of solutions with
$\dt{} = \dx{} = h(i) = 2^{-i}$ with corresponding errors $a(i)$ with
the following definition in mind.

\begin{definition}[Experimental order of convergence]
  
  Given two sequences $a(i)$ and $h(i) \searrow 0$ we define the
  \emph{experimental order of convergence} (EOC) to be the local slope
  of the $\log{a(i)}$ vs. $\log{h(i)}$ curve, i.e.,
  \begin{equation} \label{eqn:eoc}
    \textrm{EOC}(a,h;i) = \frac{\log{\frac{a_{i+1}}{a_i}}}{\log{\frac{h_{i+1}}{h_i}}}.
  \end{equation}
  
\end{definition}

Throughout \revise{this section}, we shall graphically show the
propagation of $L_2$ errors over time, i.e., we measure the error
\revise{sequentially up to} the temporal node $t_n$ in the Bochner
norm
\begin{equation} \label{eqn:enorm}
  L_2\bc{\bs{0,t_n}, L_2\bc{S^1}}
  .
\end{equation}

In addition, we examine the preservation of the discrete
momentum and energy conservation laws
\begin{equation} \label{eqn:momentum2}
  \stlint \dmdensity_t + \gfunc{\dmflux} \di{x} \di{t} = 0
\end{equation}
and
\begin{equation} \label{eqn:energy2}
  \stlint \dedensity_t + \gfunc{\deflux} \di{x} \di{t} = 0
\end{equation}
respectively, where $\dmdensity,\dmflux,\dedensity,\deflux$ are as
given by \eqref{eqn:dg:fluxdensity}. Note that when considering the
spatially continuous approximation the action of $\gfunc{}$ is exactly
the continuous spatial derivative. In particular, we shall check that
the value of the momentum on the temporal nodes does not deviate,
i.e., that
\begin{equation} \label{eqn:momentum}
  \int_{S^1} \dmdensity\bc{t_n,x} \di{x}
  =
  \int_{S^1} \dmdensity\bc{0,x} \di{x}
  ,
\end{equation}
and similarly that the value of the energy on the temporal nodes does not deviate
\begin{equation} \label{eqn:energy}
  \int_{S^1} \dedensity\bc{t_n,x} \di{x}
  =
  \int_{S^1} \dedensity\bc{0,x} \di{x}
  .
\end{equation}
By the fundamental theorem of calculus, this is equivalent to checking
\eqref{eqn:momentum2} and \eqref{eqn:energy2}. We note that
numerically, the deviation in these invariants may propagate over
time. In addition, we shall also numerically verify that our
approximations are mass invariant where appropriate.

\subsection{Test 1: The linear wave equation} \label{sec:num:lw}

Recall in Example \ref{ex:nlw} we showed that the nonlinear wave
equation falls within the multisymplectic framework, and indeed also
falls naturally within the variational framework discussed at the
beginning of Section
\ref{sec:multisym}. In particular the multisymplectic formulation of
the linear wave equation, i.e., where $\V{u} = 0$, may be simplified to
\begin{equation} \label{eqn:lw}
  \begin{split}
    v_t - w_x & = 0 \\
    v - u_t & = 0 \\
    w - u_x & = 0
    .
  \end{split}
\end{equation}
We shall begin by considering the spatially continuous approximation
\eqref{eqn:stlfem} which we note is the natural reduction of the
linear wave equation to a first order system. In addition, both the
proposed finite element approximation \eqref{eqn:stfem} and the
alternate momentum preserving approximation \eqref{eqn:mstfem},
described in Remark \ref{rem:mclaw}, are
equivalent in this setting. Assuming a solution in the form of a
harmonic wave, an exact solution for the auxiliary system may be
described by
\begin{equation} \label{eqn:lwexact}
  \begin{split}
    u & = \frac12 \sin{2\pi\bc{x+t}} \\
    v & = \pi \cos{2\pi\bc{x+t}} \\
    w & = \pi \cos{2\pi\bc{x+t}}
    .
  \end{split}
\end{equation}
Here we shall carry out numerical simulations of \eqref{eqn:stlfem} for the linear
wave equation with \eqref{eqn:lwexact} for varying polynomial degree. 
Plotting the errors and corresponding order of convergence we obtain
Figure \ref{fig:lw:eoc}.
\begin{figure}[h]
  \centering
  \subfigure[][$q=0$ and $p=1$]{
    \includegraphics[
    width=0.30\textwidth]{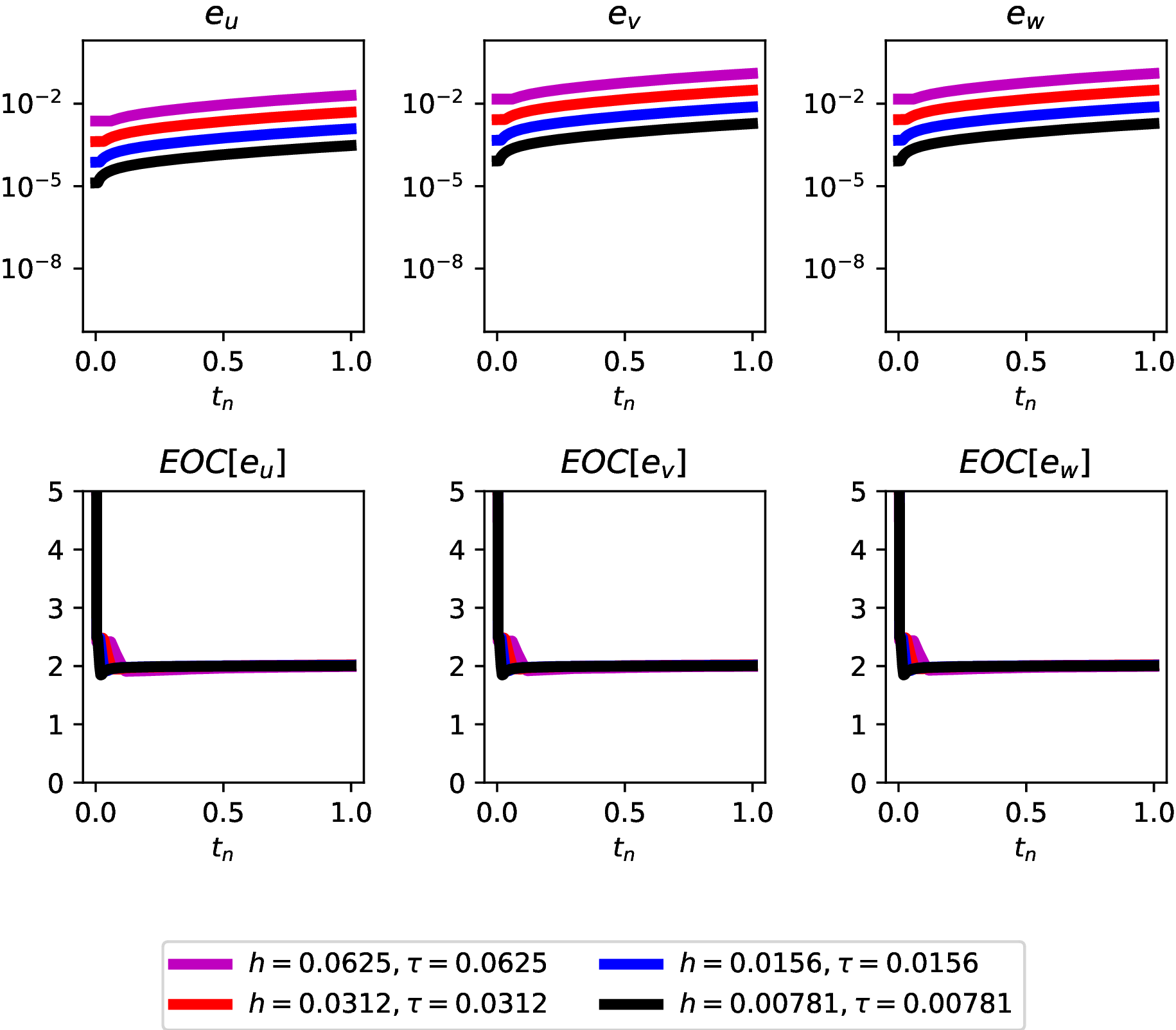}
  } \subfigure[][$q=0$ and $p=2$]{ \includegraphics[
    width=0.30\textwidth]{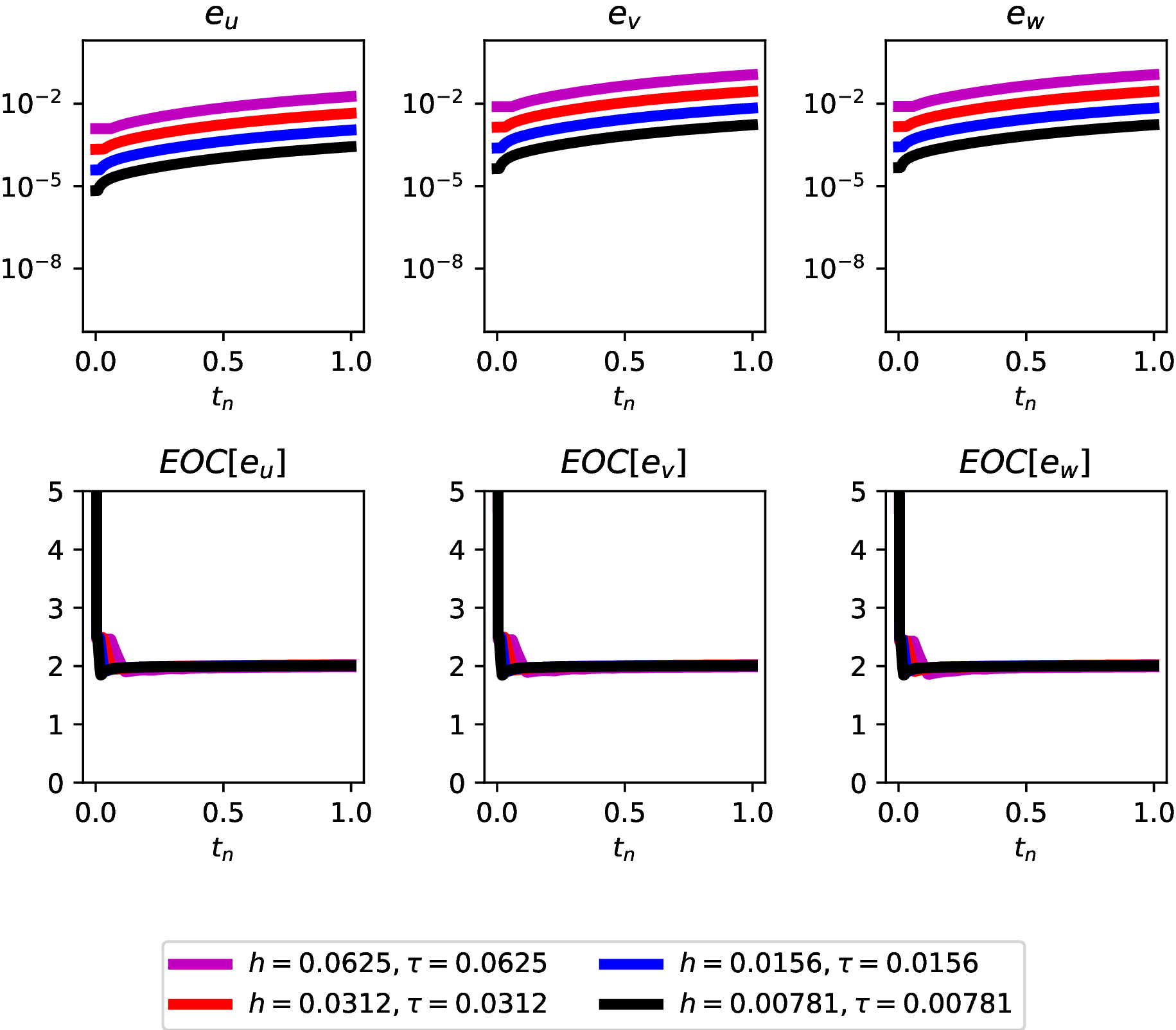}
  } \subfigure[][$q=0$ and $p=3$]{ \includegraphics[
    width=0.30\textwidth]{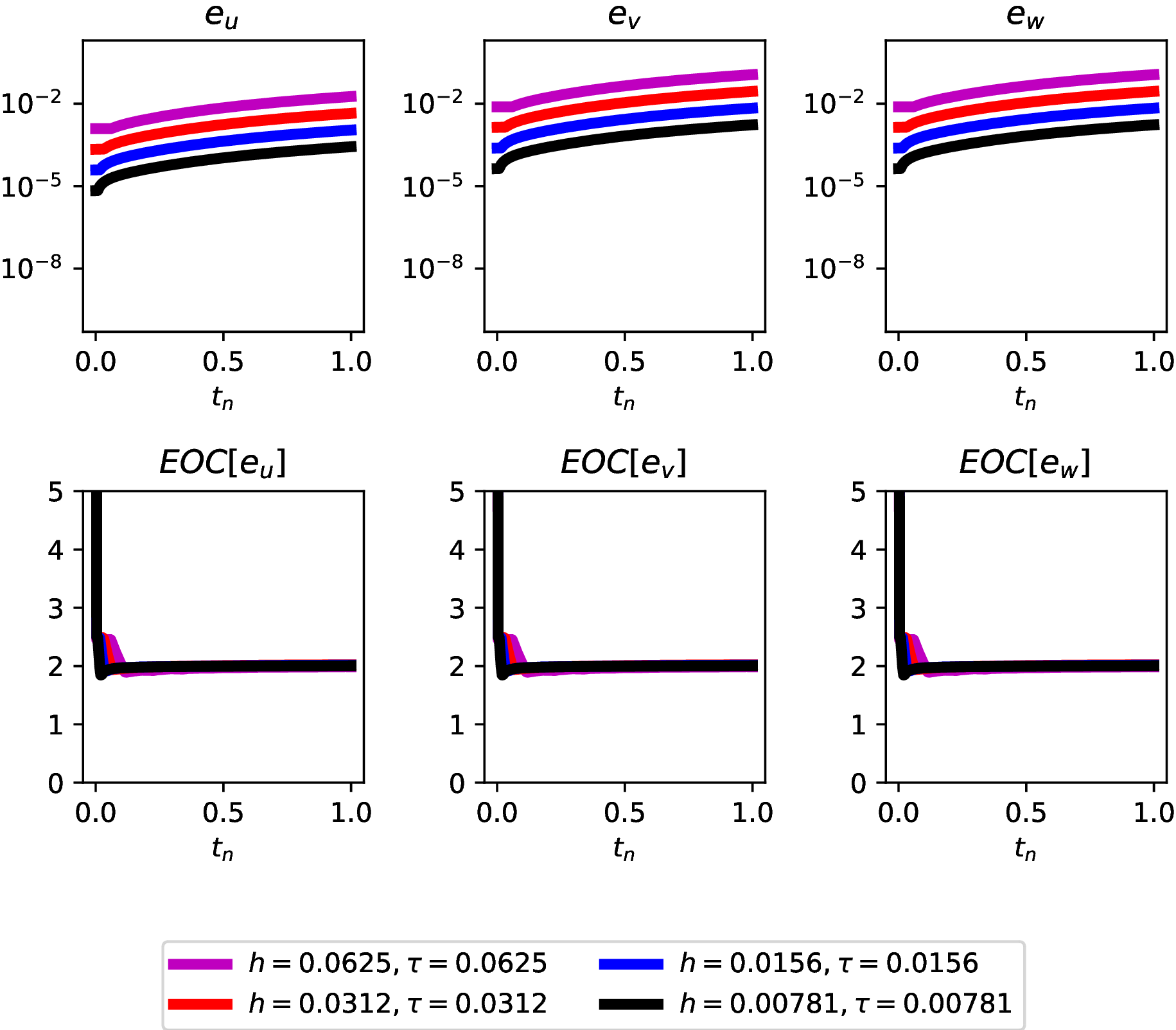}
  }
  \\
    \subfigure[][$q=1$ and $p=1$]{
    \includegraphics[
    width=0.30\textwidth]{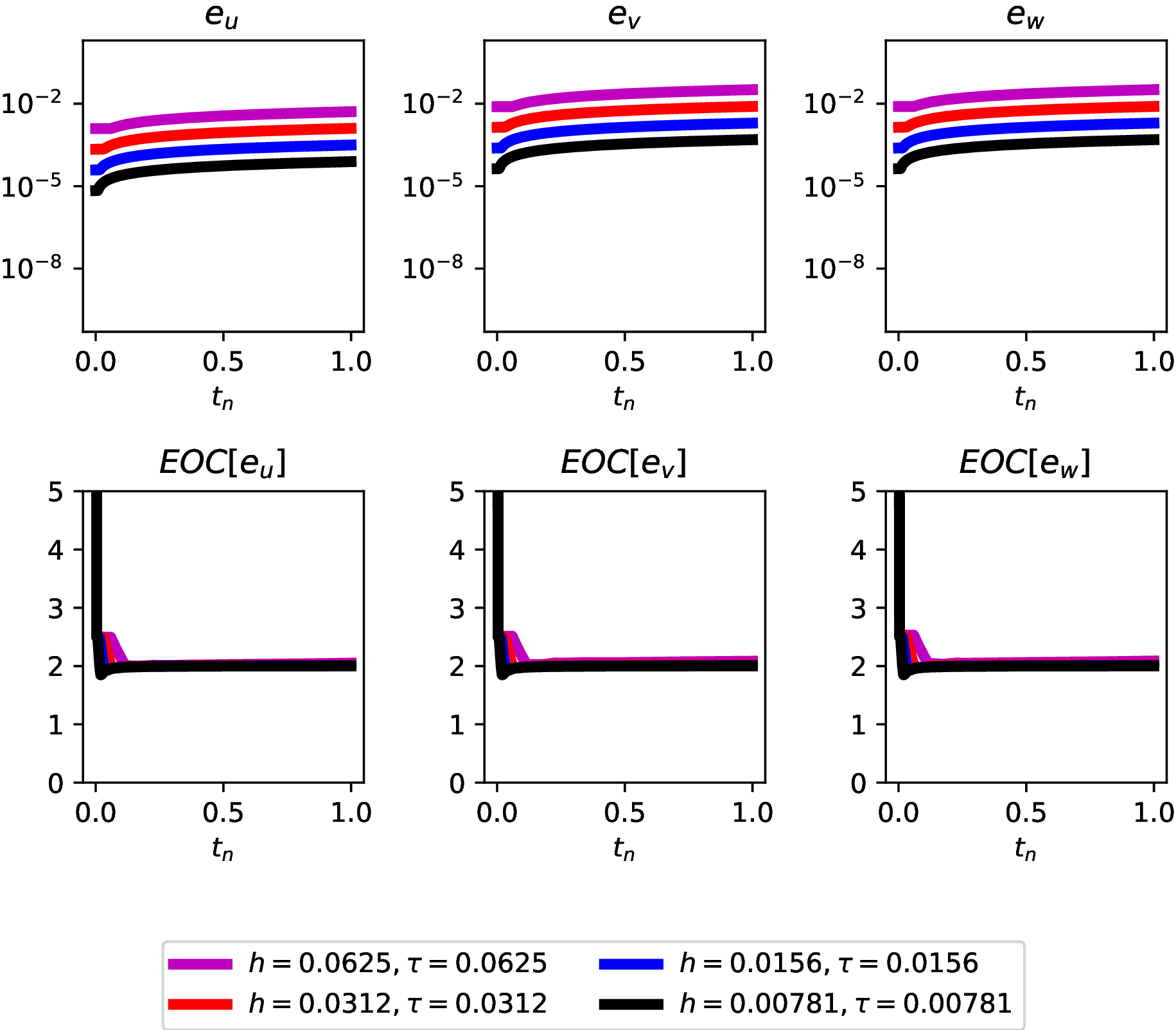}
  } \subfigure[][$q=1$ and $p=2$]{ \includegraphics[
    width=0.30\textwidth]{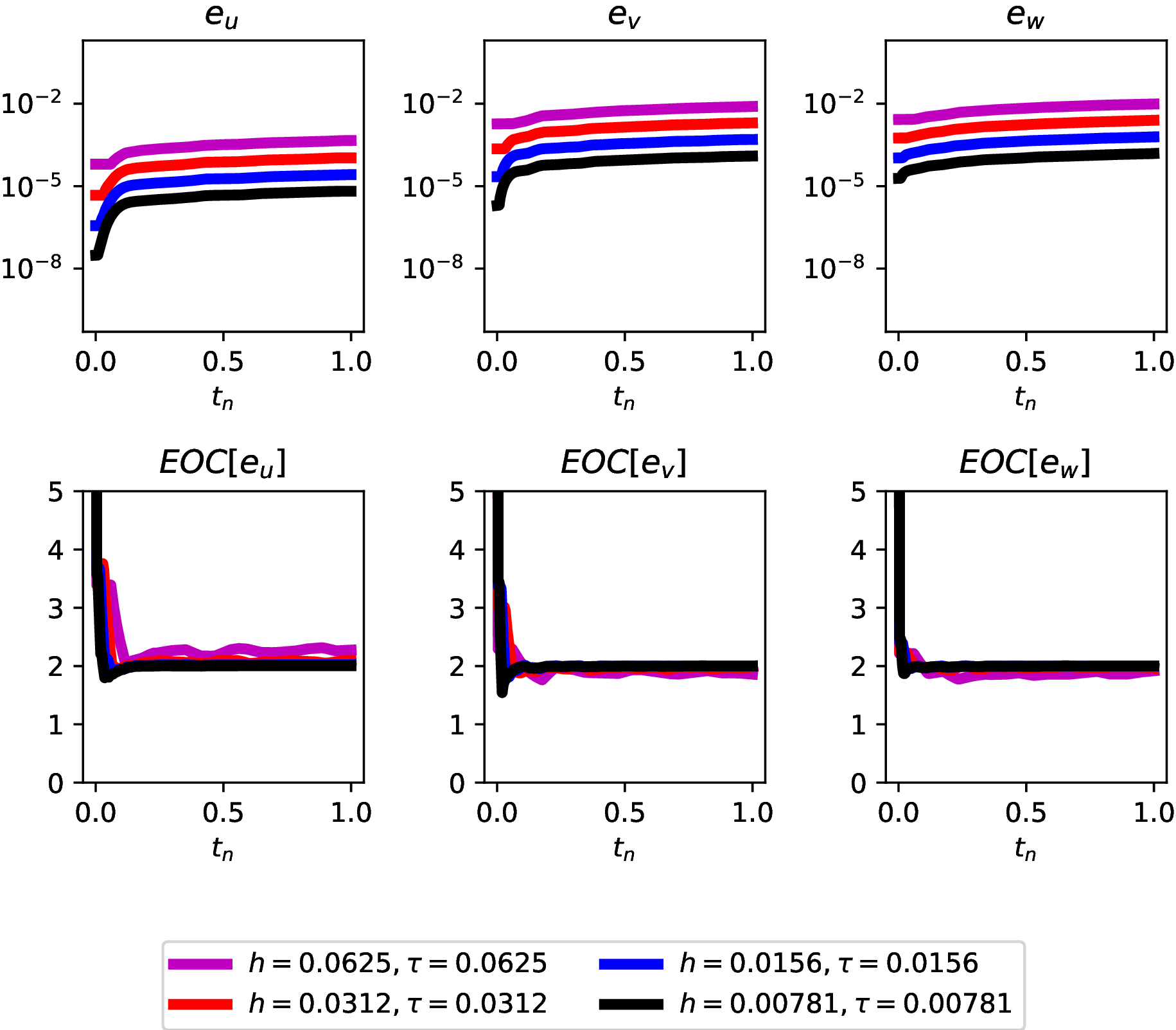}
  } \subfigure[][$q=1$ and $p=3$]{ \includegraphics[
    width=0.30\textwidth]{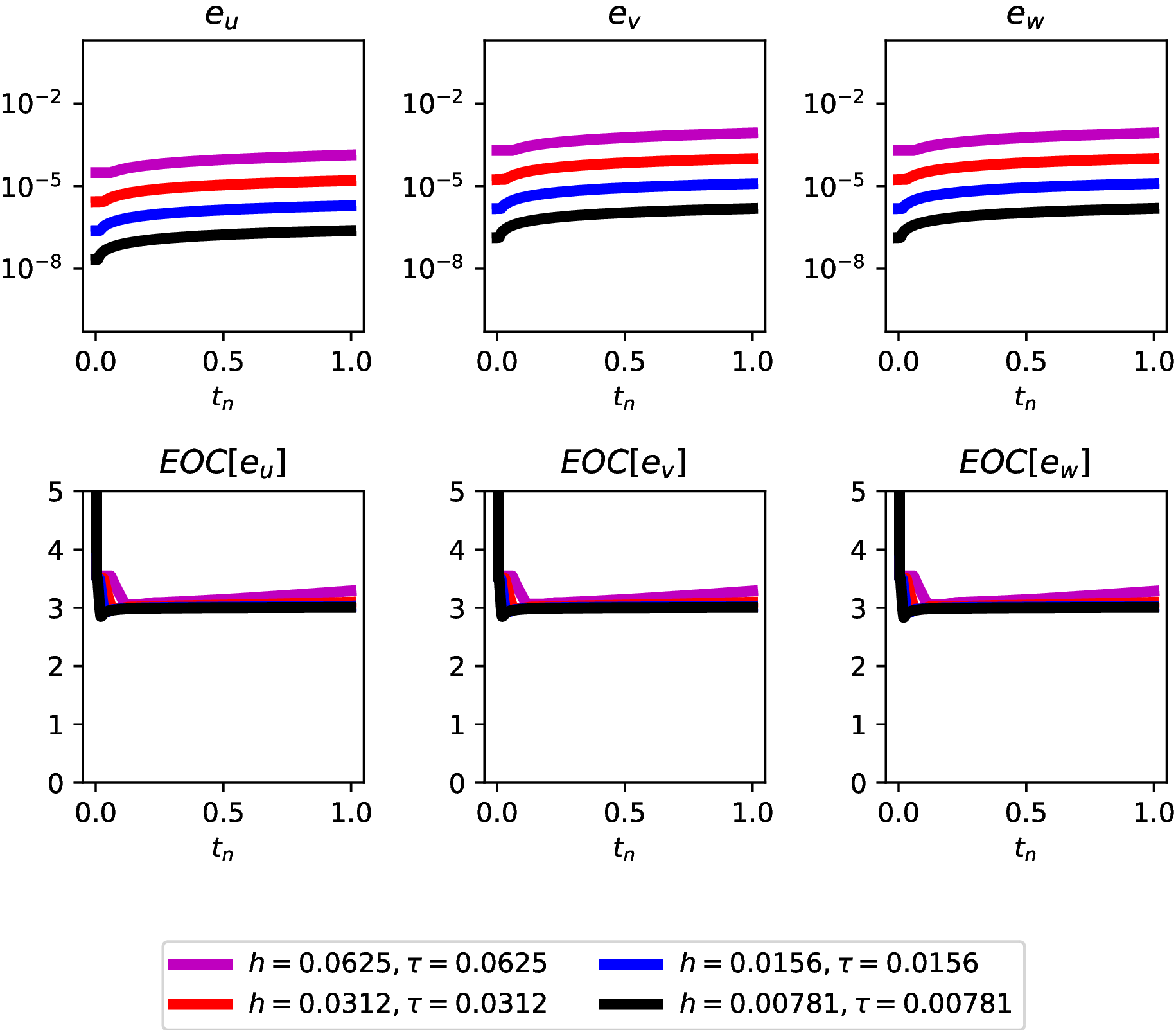}
  }
  \\
    \subfigure[][$q=2$ and $p=1$]{
    \includegraphics[
    width=0.30\textwidth]{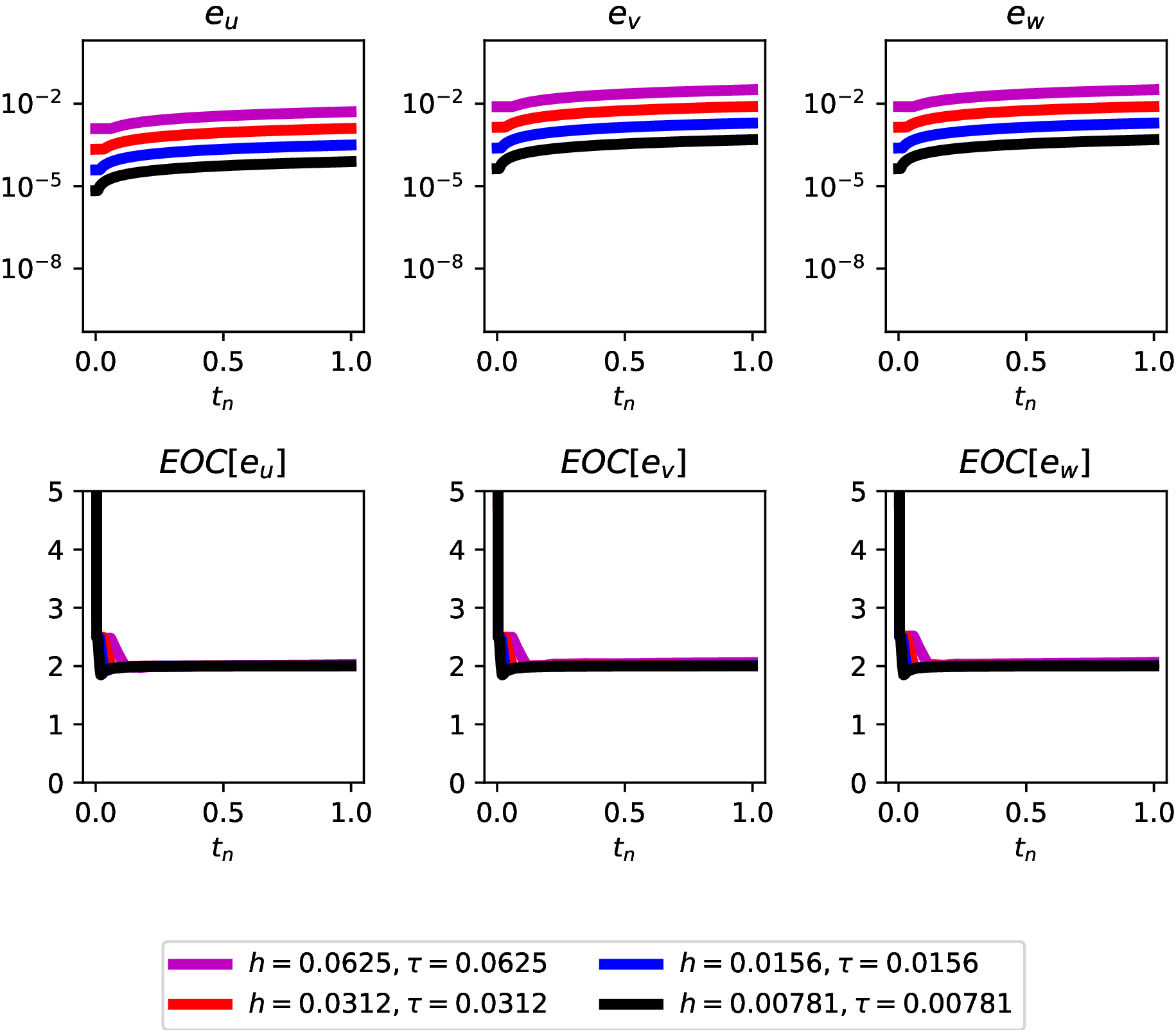}
  } \subfigure[][$q=2$ and $p=2$]{ \includegraphics[
    width=0.30\textwidth]{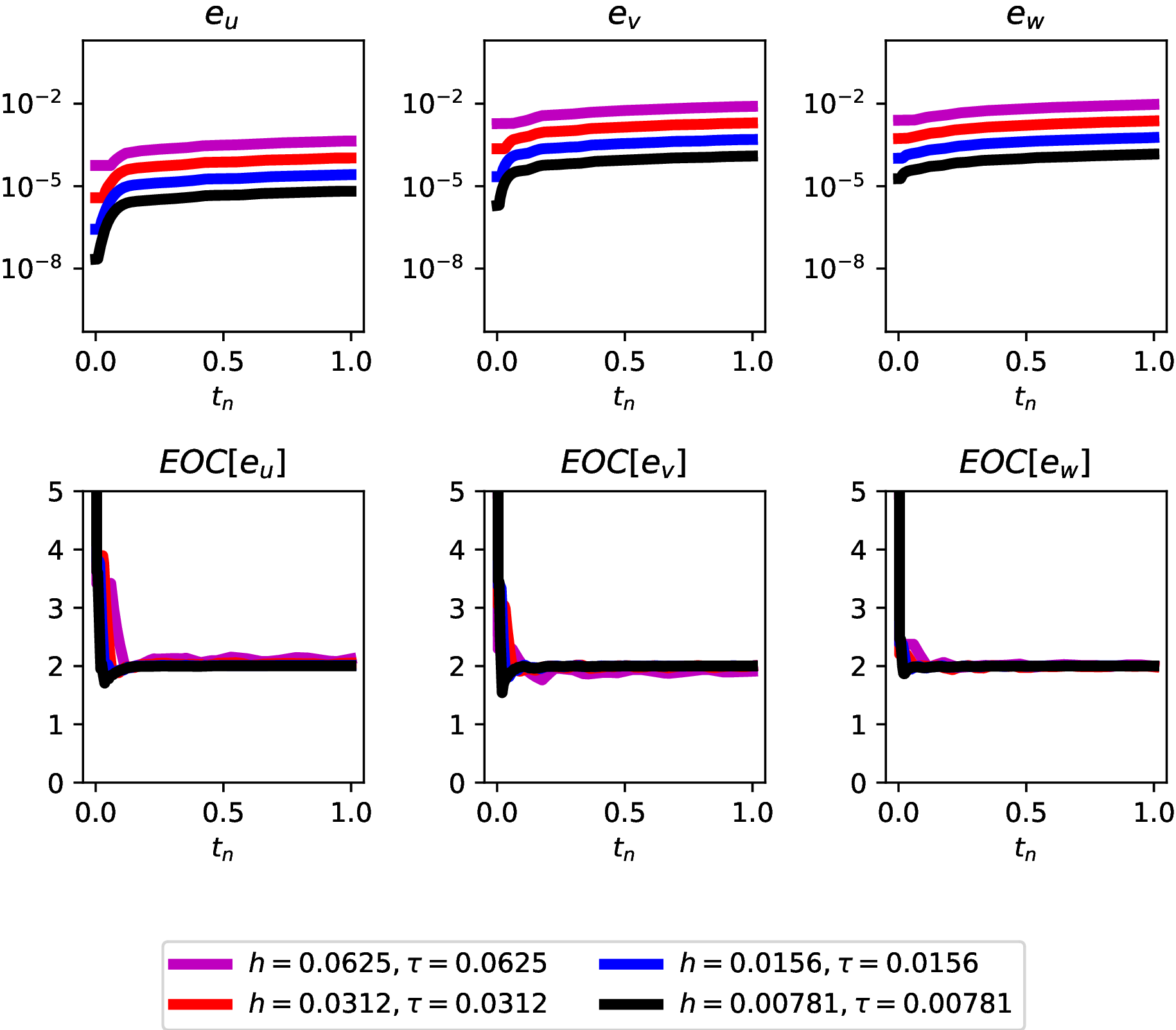}
  } \subfigure[][$q=2$ and $p=3$]{ \includegraphics[
    width=0.30\textwidth]{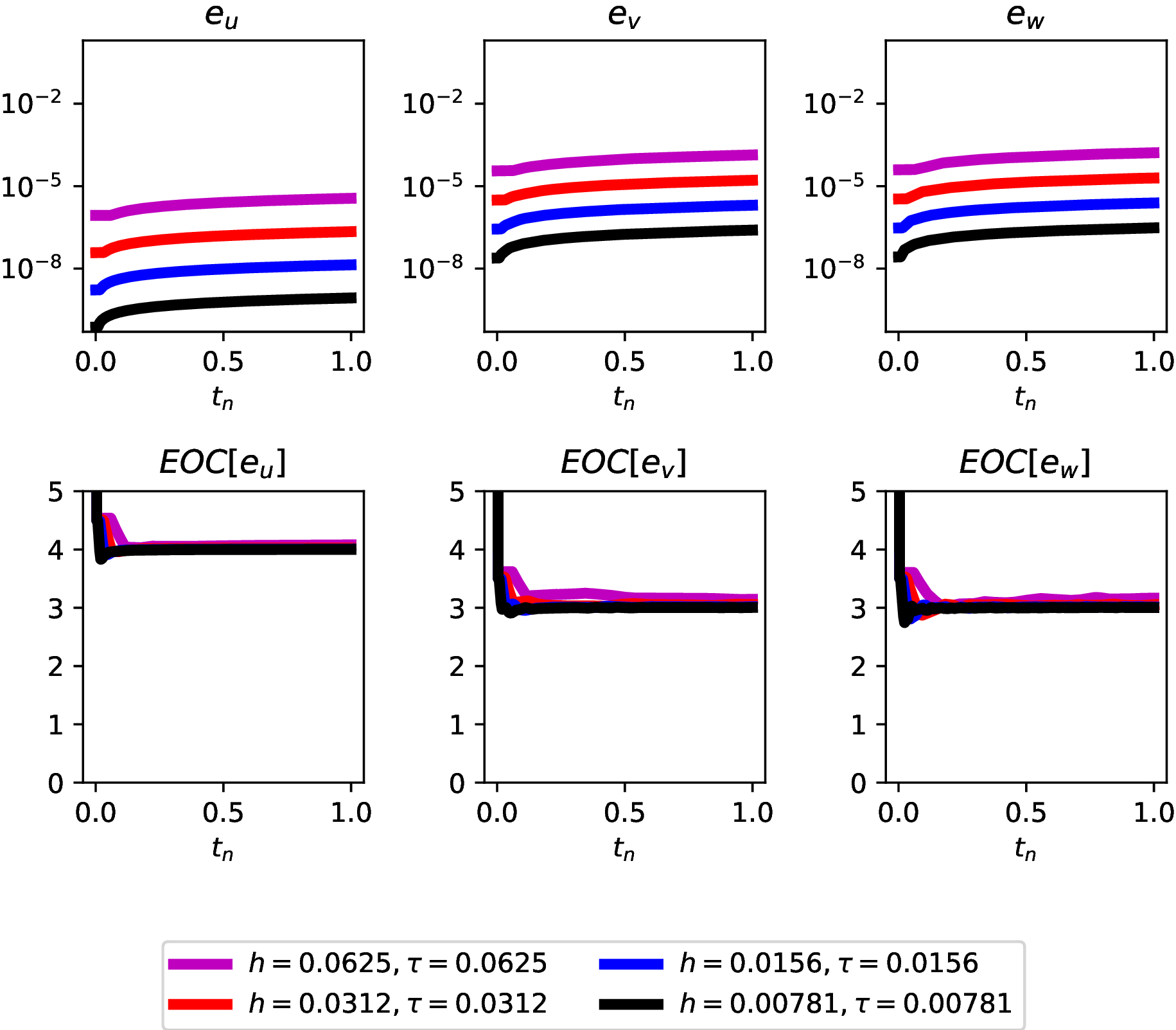}
  }

  \caption{The error of the spatially continuous finite element scheme
    \eqref{eqn:stlfem} for the linear wave equation \eqref{eqn:lw}
    initialised by the exact solution \eqref{eqn:lwexact}. Here
    $e_u,e_v,e_w$ denotes the errors in the components $U,V,W$
    measured in the Bochner norm \eqref{eqn:enorm}. Below we plot the
    EOC \eqref{eqn:eoc} corresponding to each of these errors. \label{fig:lw:eoc}}
\end{figure}
Within this figure we consider the errors for the solution $U$ and the
auxiliary variables independently. We shall now focus our attention to
the error in $U$, denoted $e_u$. The simulations suggest a temporal experimental order of accuracy of
$\mathcal{O}\bc{\dt{}^{q+2}}$, which is optimal in time. However,
spatially we observe that for $p$ even we obtain sub-optimal convergence
at rate $\mathcal{O}\bc{\dx{}^q}$, but for odd $p$ we converge
optimally at rate $\mathcal{O}\bc{\dx{}^{q+1}}$. This phenomenon is
supported by more extensive numerical experimentation.

Instead simulating the spatially discontinuous finite element approximation \eqref{eqn:stfemdg} for the linear
wave equation we obtain the errors and EOCs shown in Figure \ref{fig:lw:eoc:dg}.
\begin{figure}[h]
  \centering
  \subfigure[][$q=0$ and $p=1$]{
    \includegraphics[
    width=0.30\textwidth]{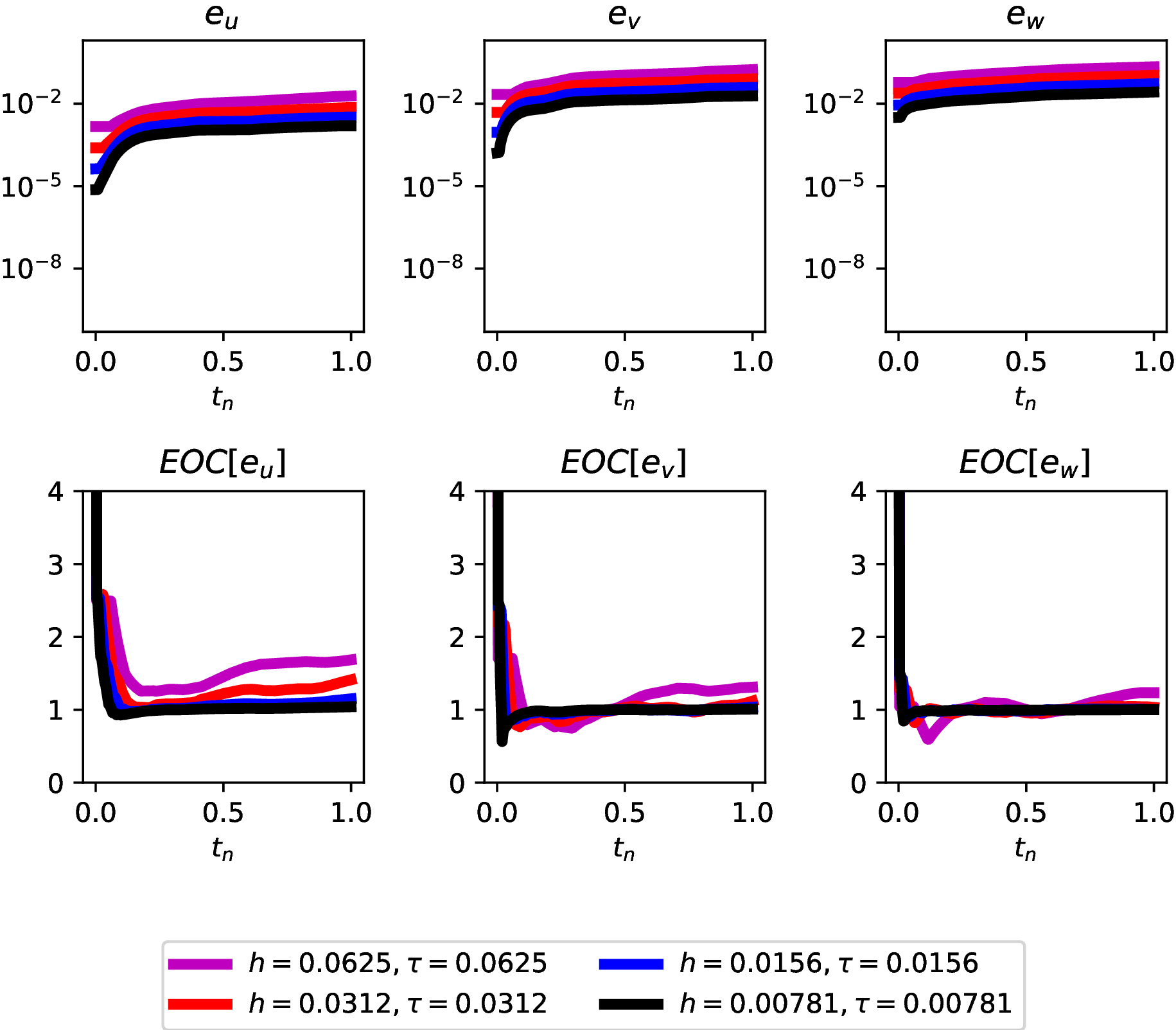}
  } \subfigure[][$q=0$ and $p=2$]{ \includegraphics[
    width=0.30\textwidth]{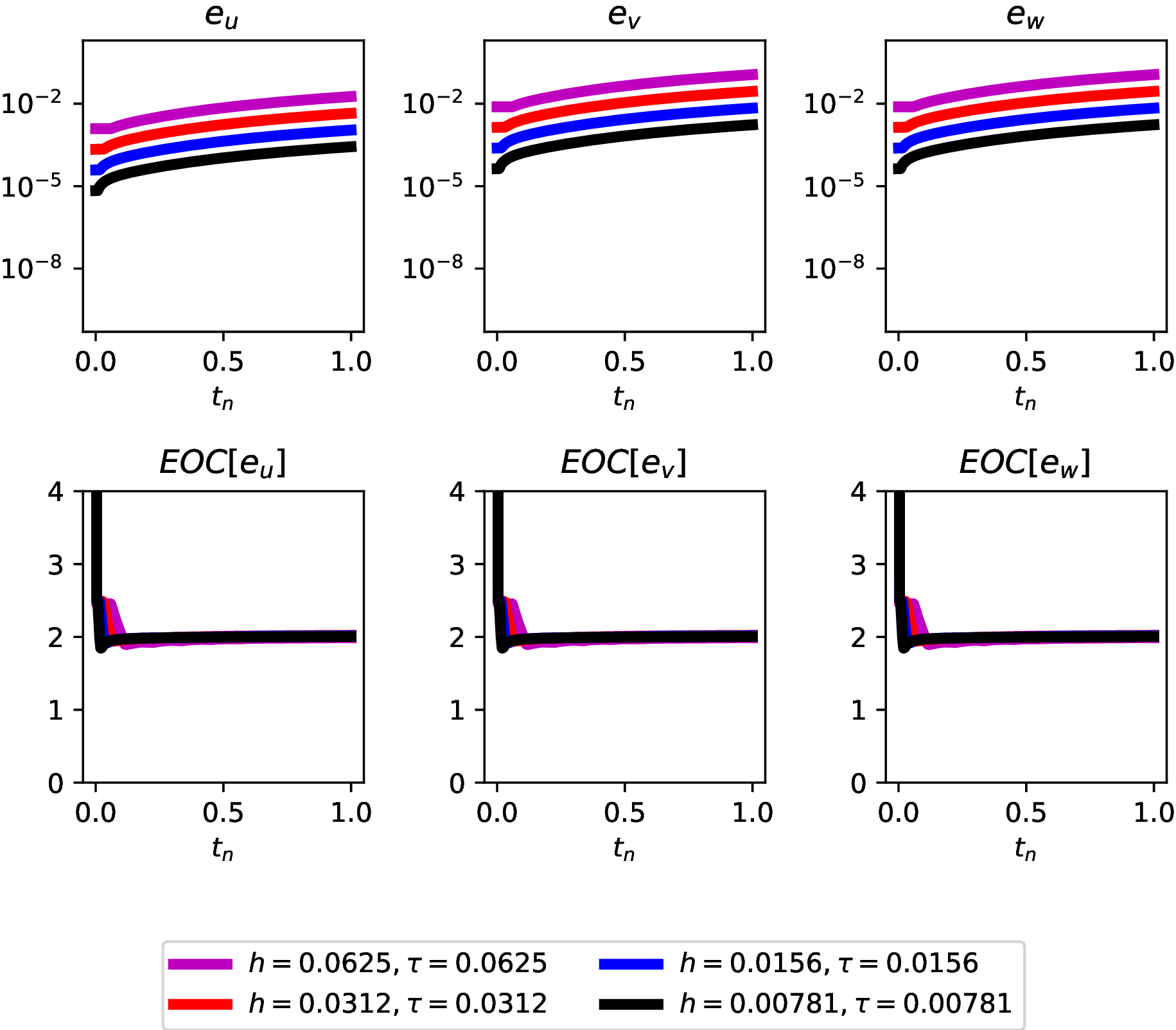}
  } \subfigure[][$q=0$ and $p=3$]{ \includegraphics[
    width=0.30\textwidth]{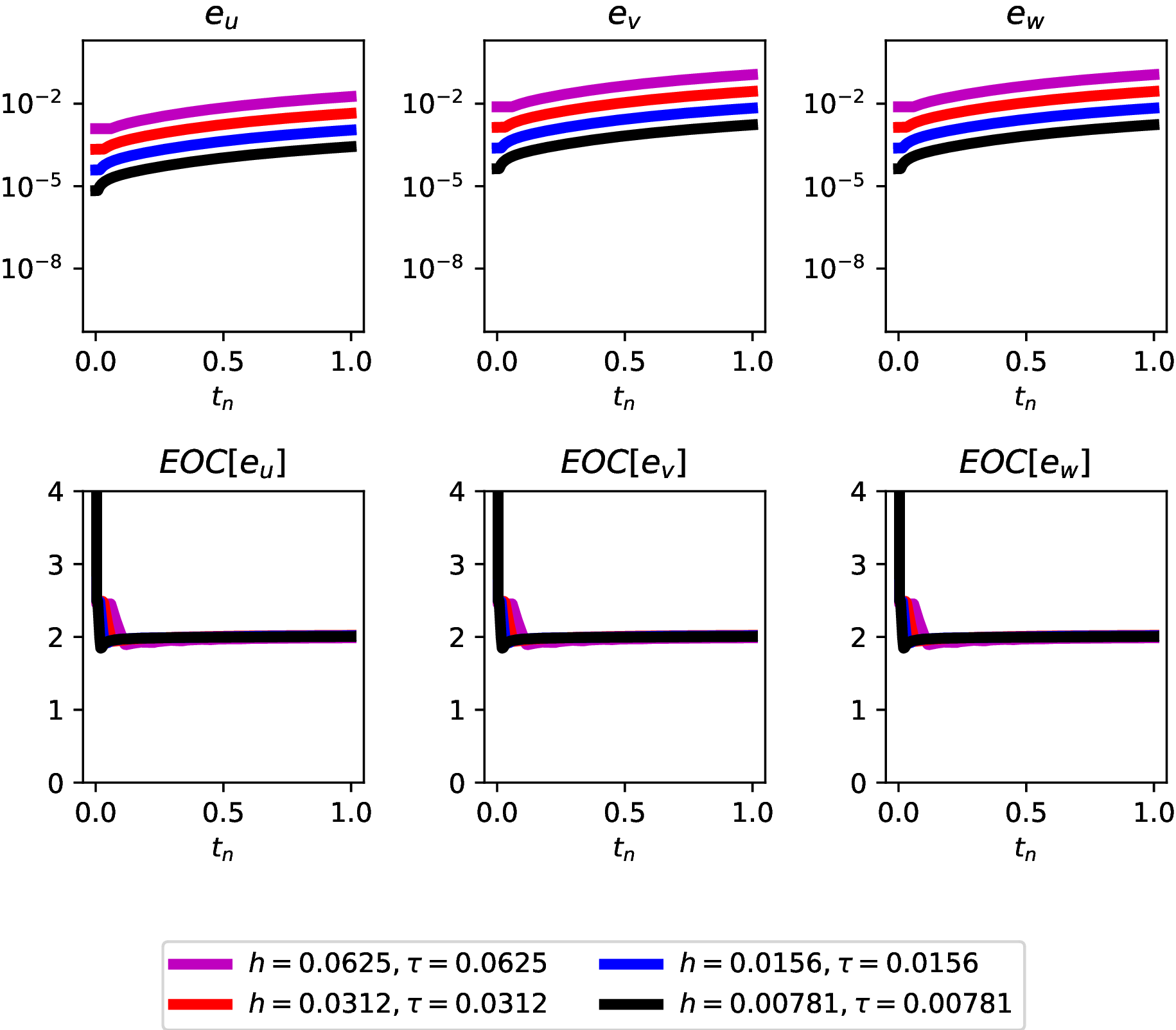}
  }
  \\
    \subfigure[][$q=1$ and $p=1$]{
    \includegraphics[
    width=0.30\textwidth]{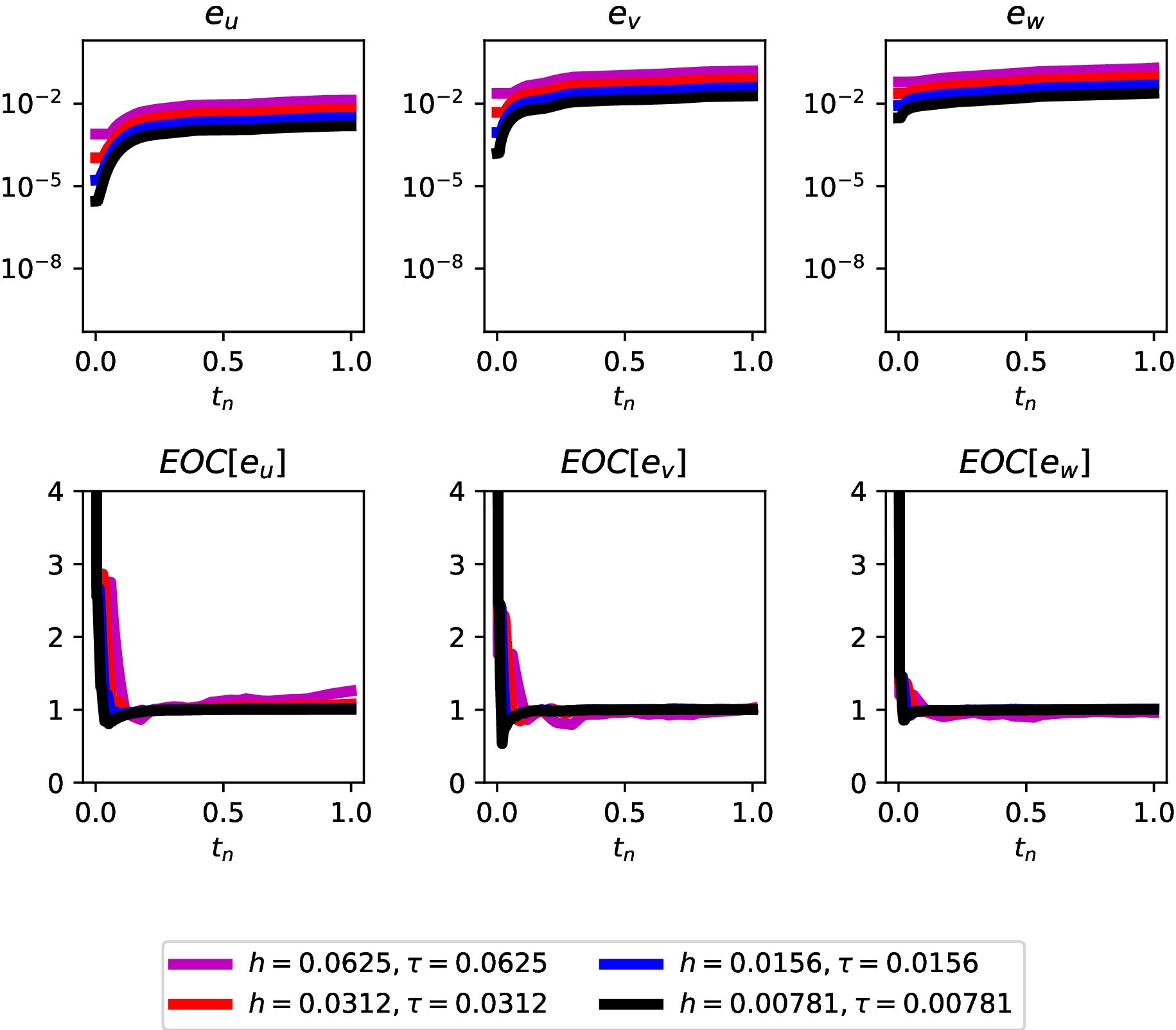}
  } \subfigure[][$q=1$ and $p=2$]{ \includegraphics[
    width=0.30\textwidth]{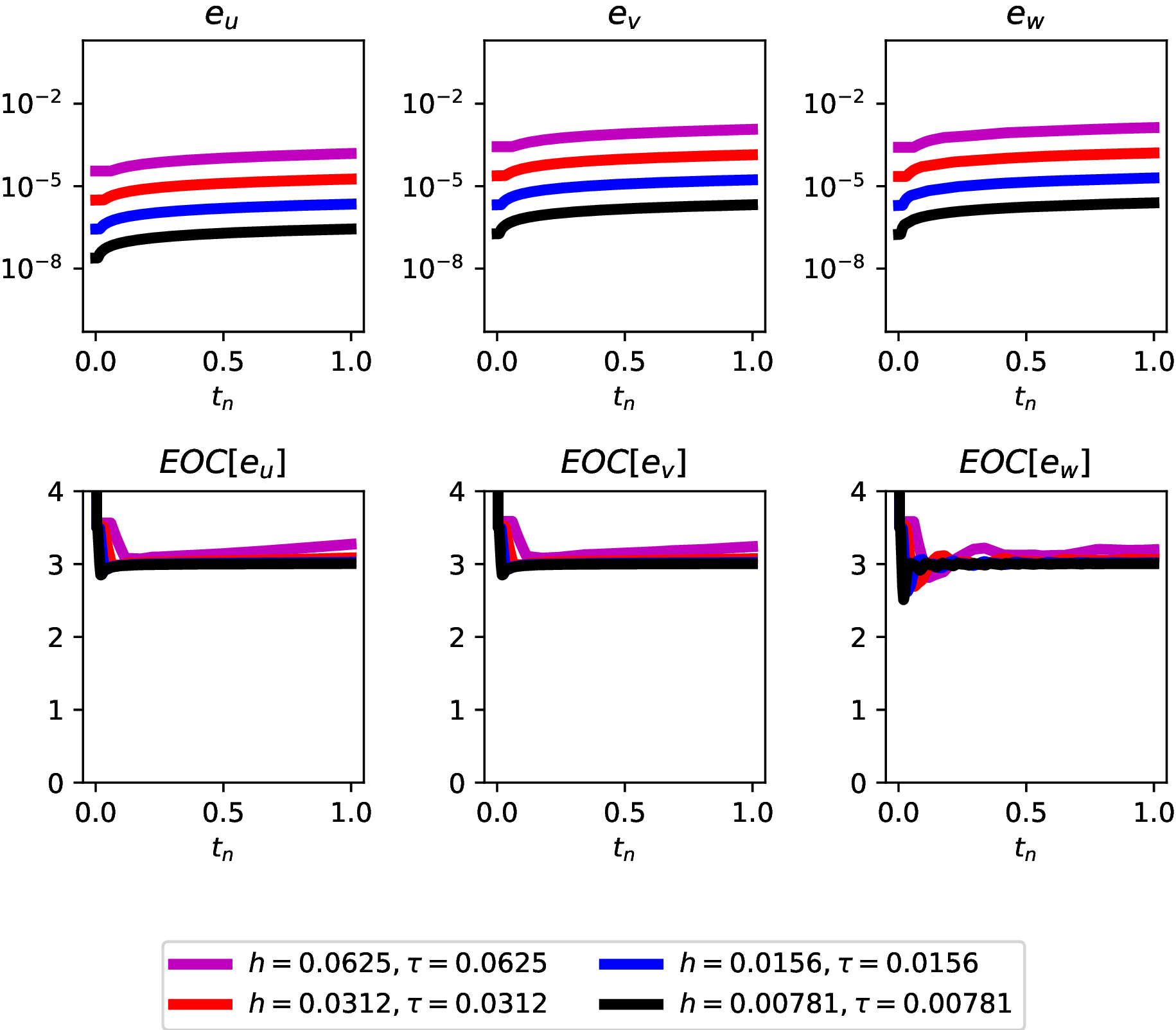}
  } \subfigure[][$q=1$ and $p=3$]{ \includegraphics[
    width=0.30\textwidth]{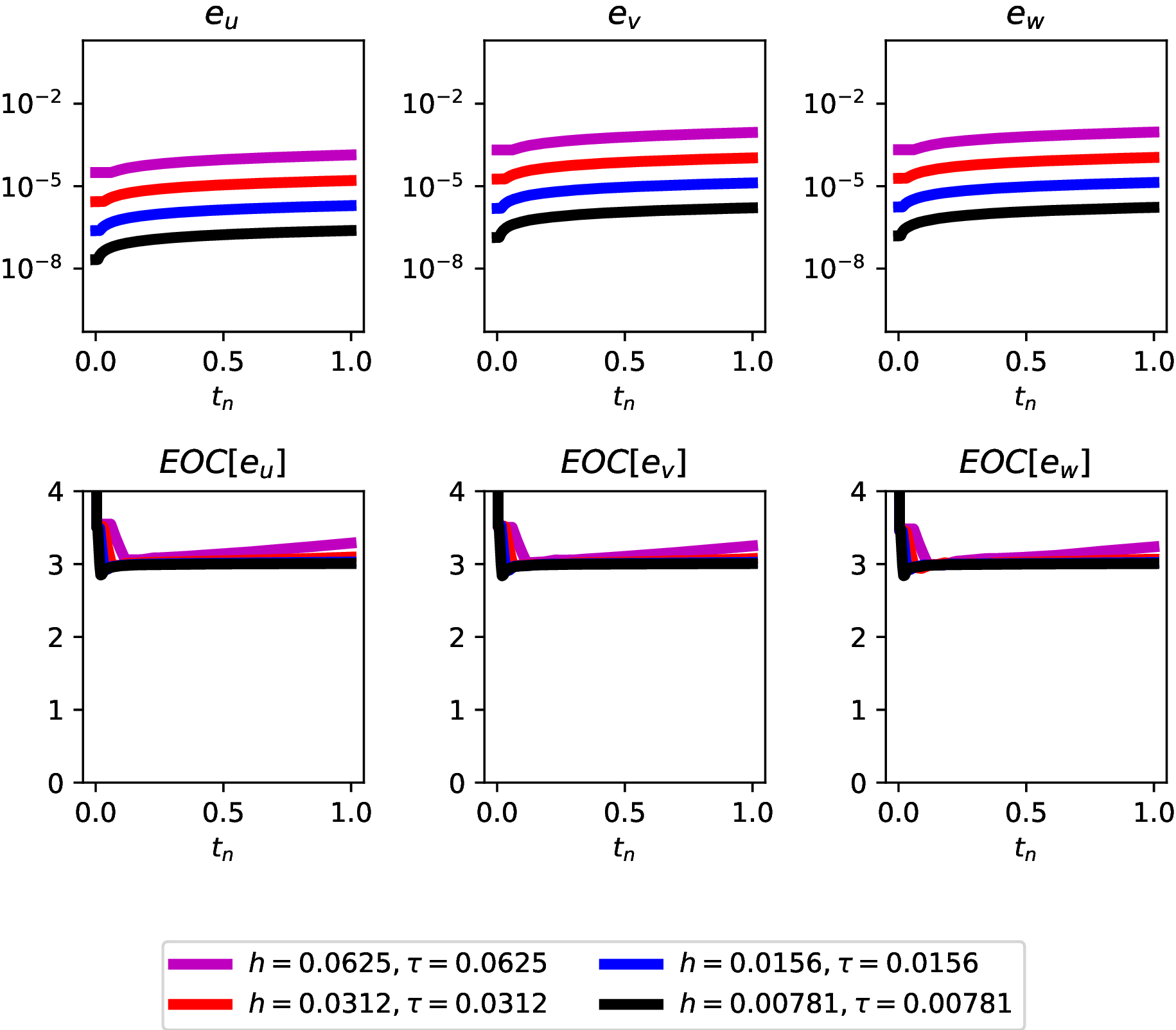}
  }
  \\
    \subfigure[][$q=2$ and $p=1$]{
    \includegraphics[
    width=0.30\textwidth]{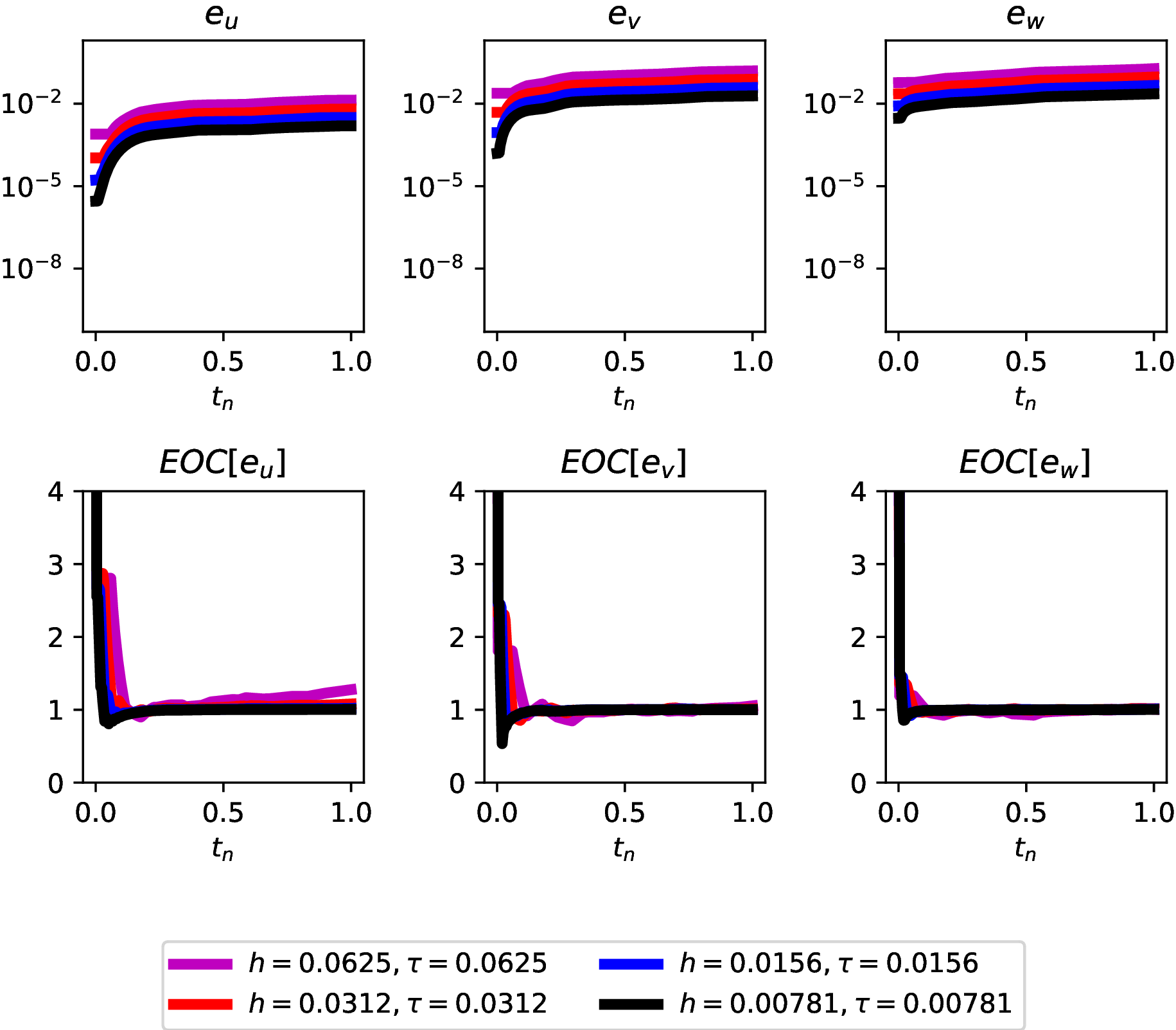}
  } \subfigure[][$q=2$ and $p=2$]{ \includegraphics[
    width=0.30\textwidth]{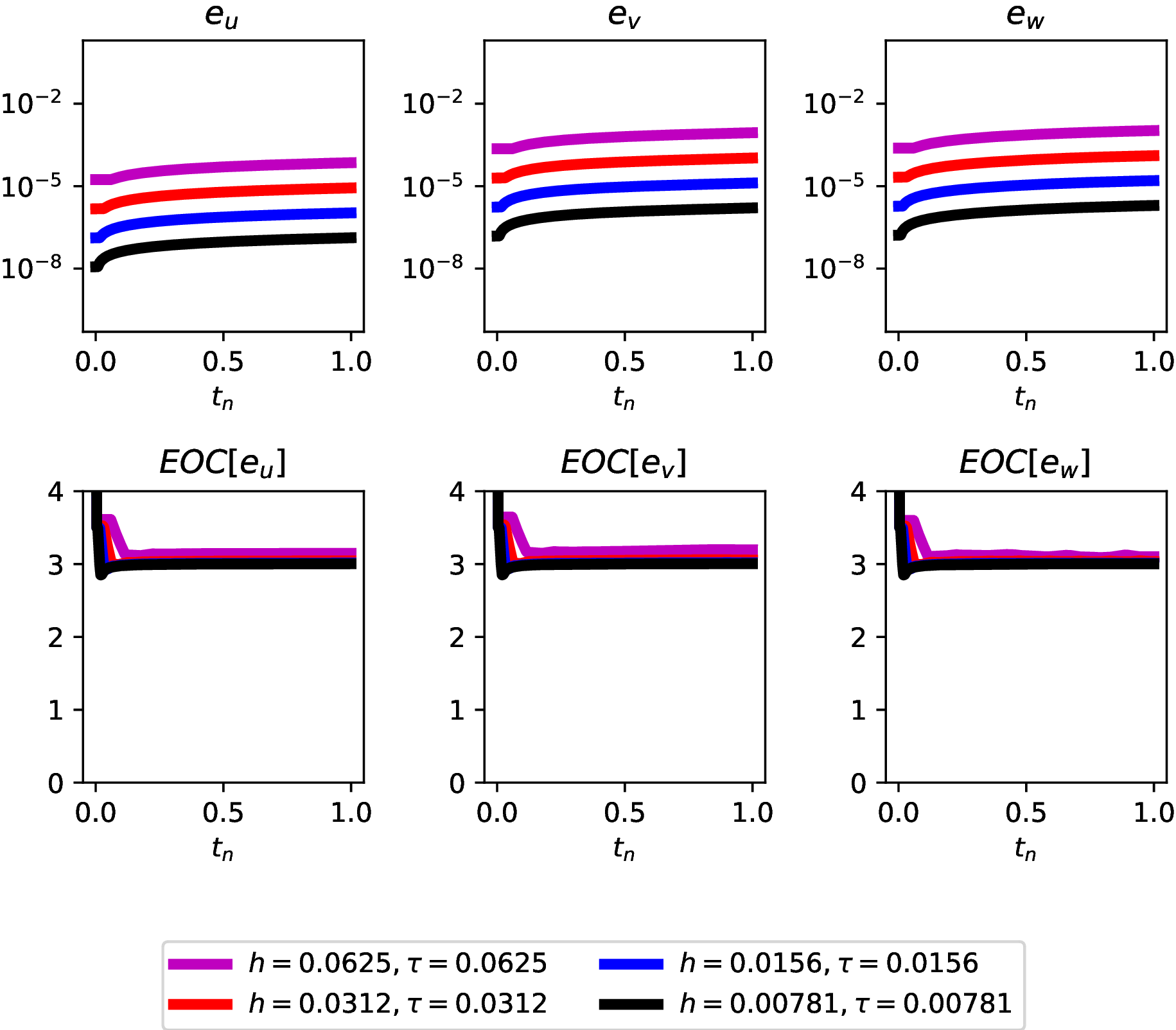}
  } \subfigure[][$q=2$ and $p=3$]{ \includegraphics[
    width=0.30\textwidth]{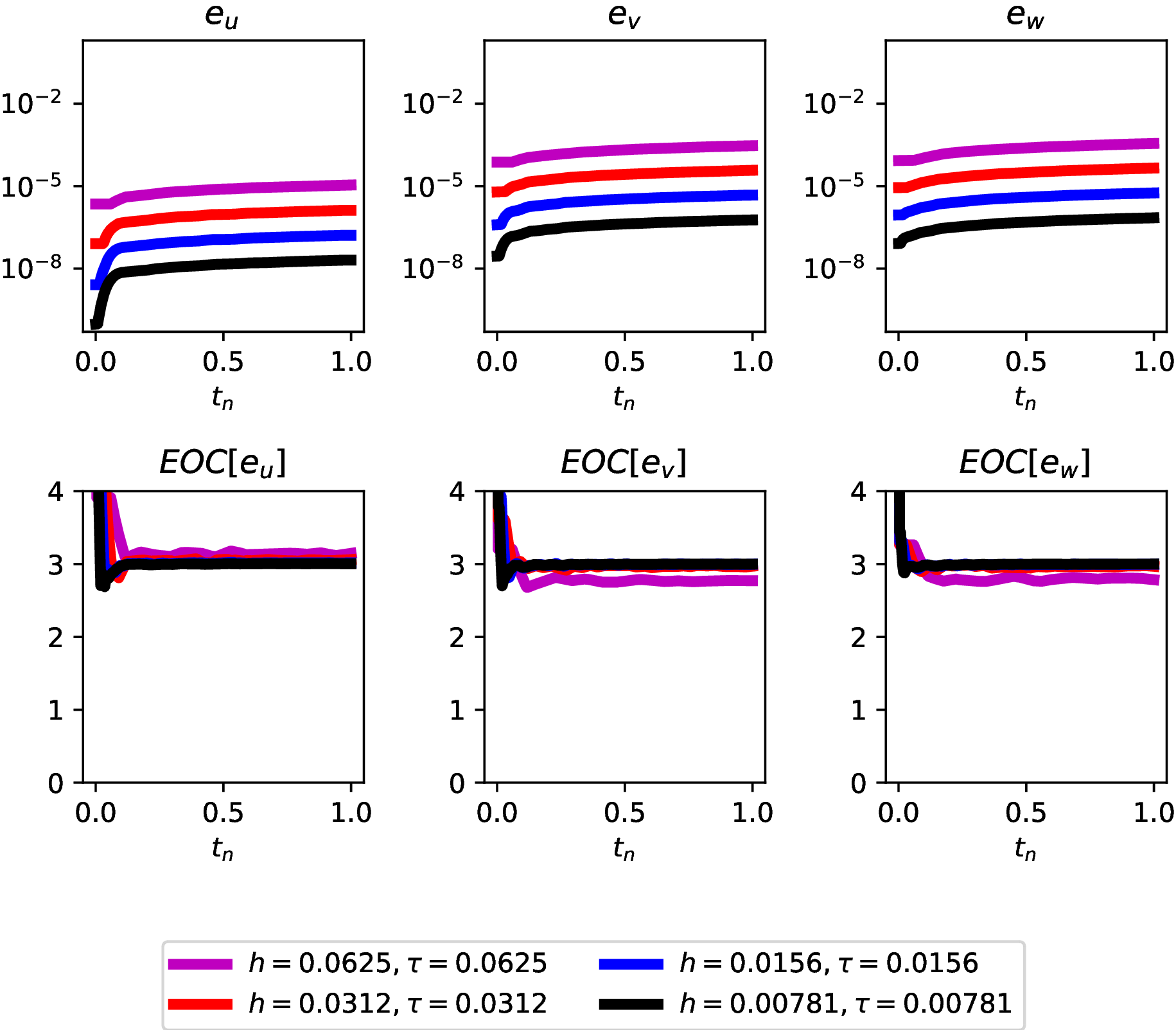}
  }

  \caption{The error of the spatially \emph{discontinuous} finite element scheme
    \eqref{eqn:stfemdg} for the linear wave equation \eqref{eqn:lw}
    initialised by the exact solution \eqref{eqn:lwexact}. Here
    $e_u,e_v,e_w$ denotes the errors in the components $U,V,W$
    measured in the Bochner norm \eqref{eqn:enorm}. Below we plot the
    EOC \eqref{eqn:eoc} corresponding to each of these errors. \label{fig:lw:eoc:dg}}
\end{figure}
These simulations suggest a temporal experimental order of accuracy of
$\mathcal{O}\bc{\dt{}^{q+2}}$, as expected as our temporal
discretisation is identical to the previous simulation. However,
spatially we observe that for $p$ odd we obtain sub-optimal convergence
at rate $\mathcal{O}\bc{\dx{}^q}$, but for even $p$ we converge
optimally at rate $\mathcal{O}\bc{\dx{}^{q+1}}$. This sub-optimality is
also observed in the spatially continuous case however the sub-optimal
and optimal cases are reversed. This phenomenon is supported by more
extensive numerical experimentation. In this case it is likely caused
by issues with the spatial first derivative operator $\gfunc{}$, which
is known to lack uniqueness for $p$ odd, see \cite{self:defocus}. We
may resolve this sub-optimal convergence for the linear wave equation
by modifying $\gfunc{}$ appropriately such that the spatial
derivatives involve an appropriate mix of upwind and downwind fluxes
as in \cite{GiesselmannPryer:2016}, however the resulting
discretisation not be valid for an arbitrarily multisymplectic PDE.

Further to these convergence simulations, all numerical simulations
for the linear wave equation preserve the momentum
\eqref{eqn:momentum} and energy \eqref{eqn:energy} up to machine
precision locally.
  
\subsection{Test 2: The nonlinear wave equation}

Throughout this test we shall restrict ourselves to the spatially
continuous scheme, as the numerical results do not differ in the
spatially discontinuous case. Instead choosing the potential in
Example \ref{ex:nlw} to be $\V{u} = \frac14 u^4$ we obtain a nonlinear
wave equation. We may explicitly rewrite the multisymplectic
formulation as the system
\begin{equation} \label{eqn:nlw}
  \begin{split}
    v_t - w_x + \Vp{u} & = 0 \\
    v - u_t & = 0 \\
    w - u_x & = 0
    .
  \end{split}
\end{equation}
In this setting, our proposed finite element scheme \eqref{eqn:stfem}
and the momentum conserving alternative \eqref{eqn:mstfem} are
distinct. In addition to the conservation laws obtained via the
multisymplectic structure we also expect simulations to be mass
conserving, i.e.,
\begin{equation} \label{eqn:nlwmass}
  \int_{S^1} U\bc{t_n,x} \di{x}
  =
  \int_{S^1} U\bc{0,x} \di{x}
  .
\end{equation}
Through fixing $\dt{}=0.1$ and $\dx{}=0.01$ we run extensive
numerical simulations investigating the deviation in mass, momentum
\eqref{eqn:momentum} and energy \eqref{eqn:energy} in Figure
\ref{fig:nlw:dev}.
\begin{figure}[h]
  \centering
  \subfigure[][$q=0$ and $p=1$]{
    \includegraphics[
    width=0.30\textwidth]{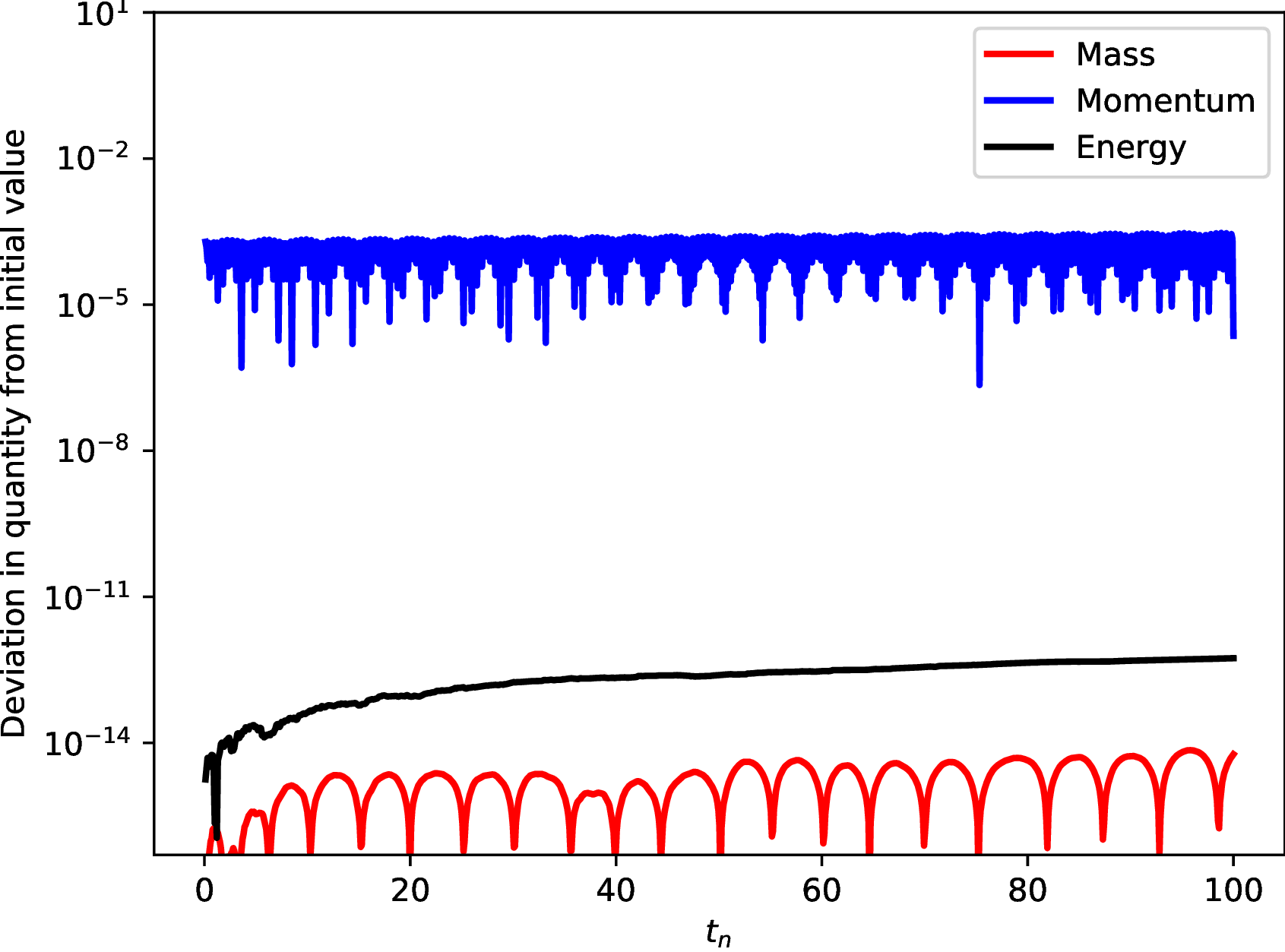}
  } \subfigure[][$q=0$ and $p=2$]{ \includegraphics[
    width=0.30\textwidth]{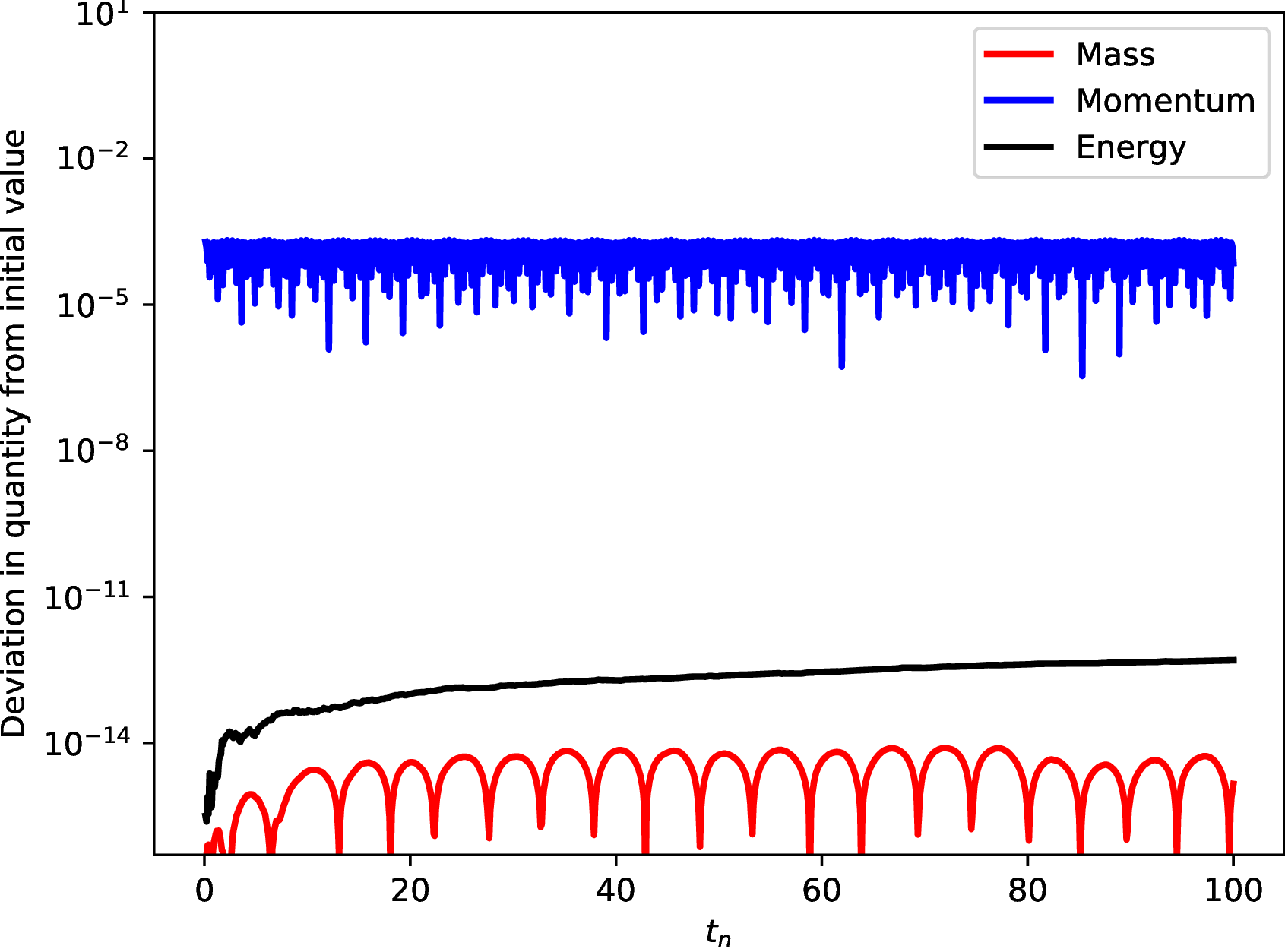}
  } \subfigure[][$q=0$ and $p=3$]{ \includegraphics[
    width=0.30\textwidth]{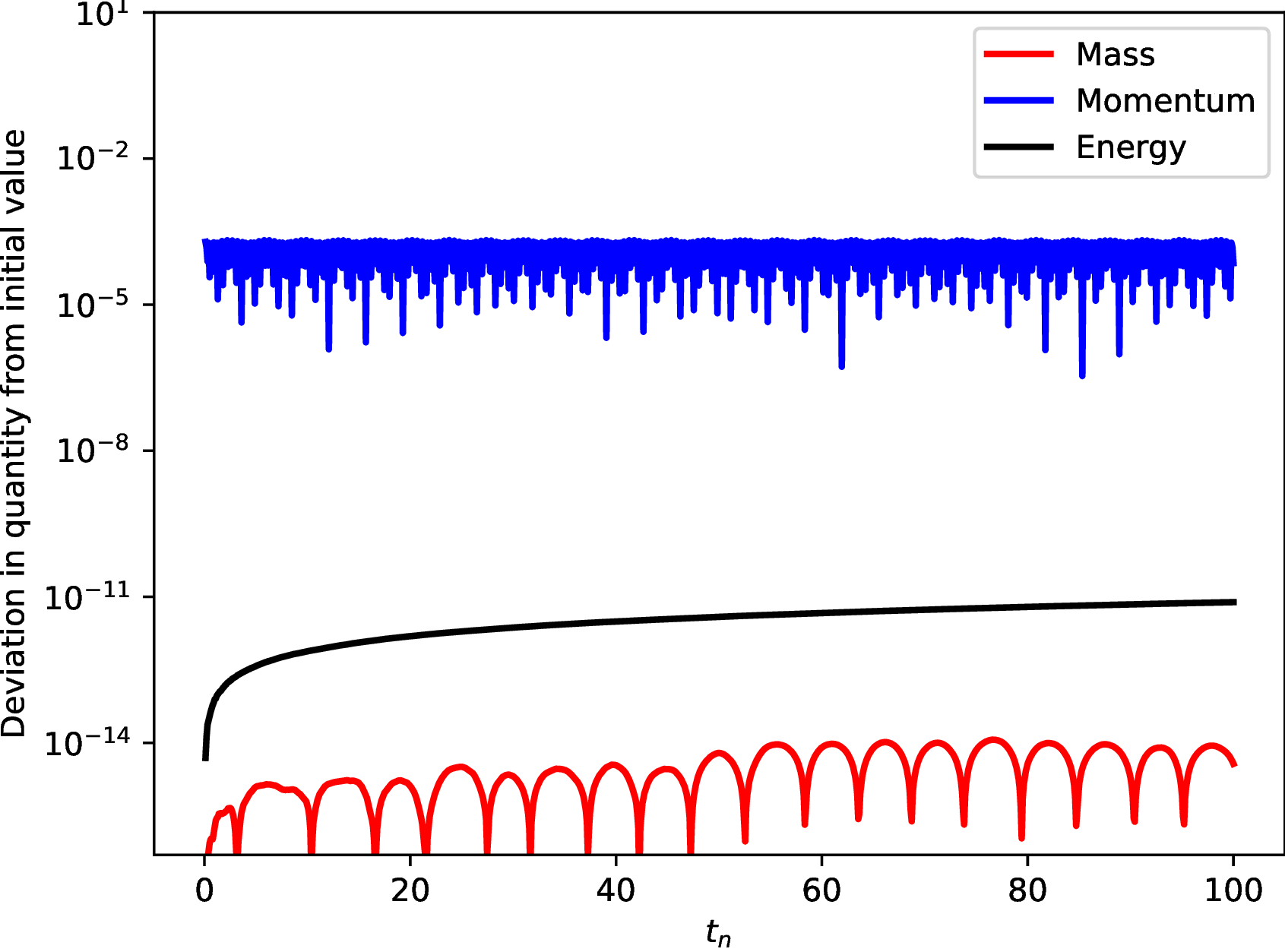}
  }
  \\
    \subfigure[][$q=1$ and $p=1$]{
    \includegraphics[
    width=0.30\textwidth]{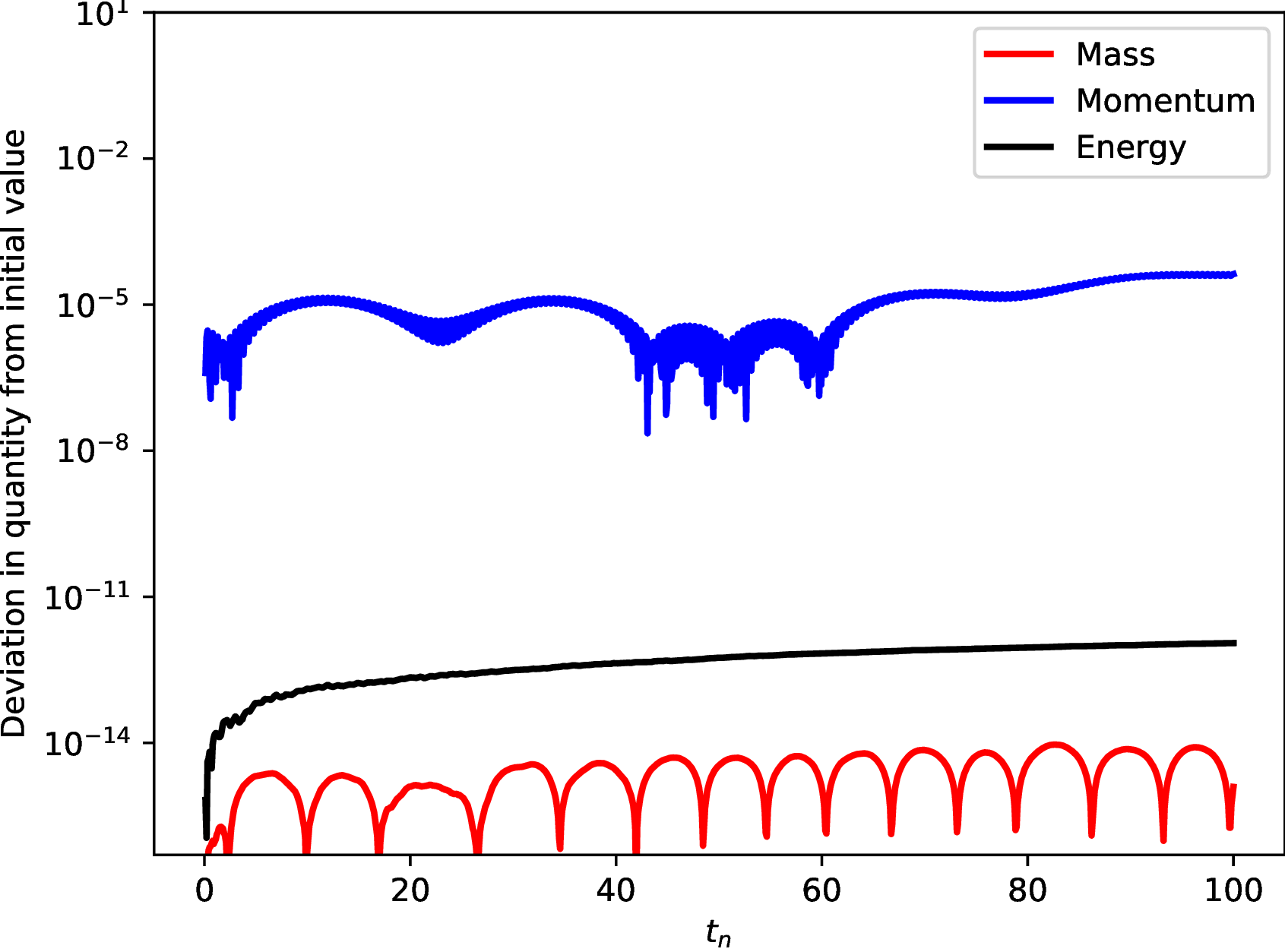}
  } \subfigure[][$q=1$ and $p=2$]{ \includegraphics[
    width=0.30\textwidth]{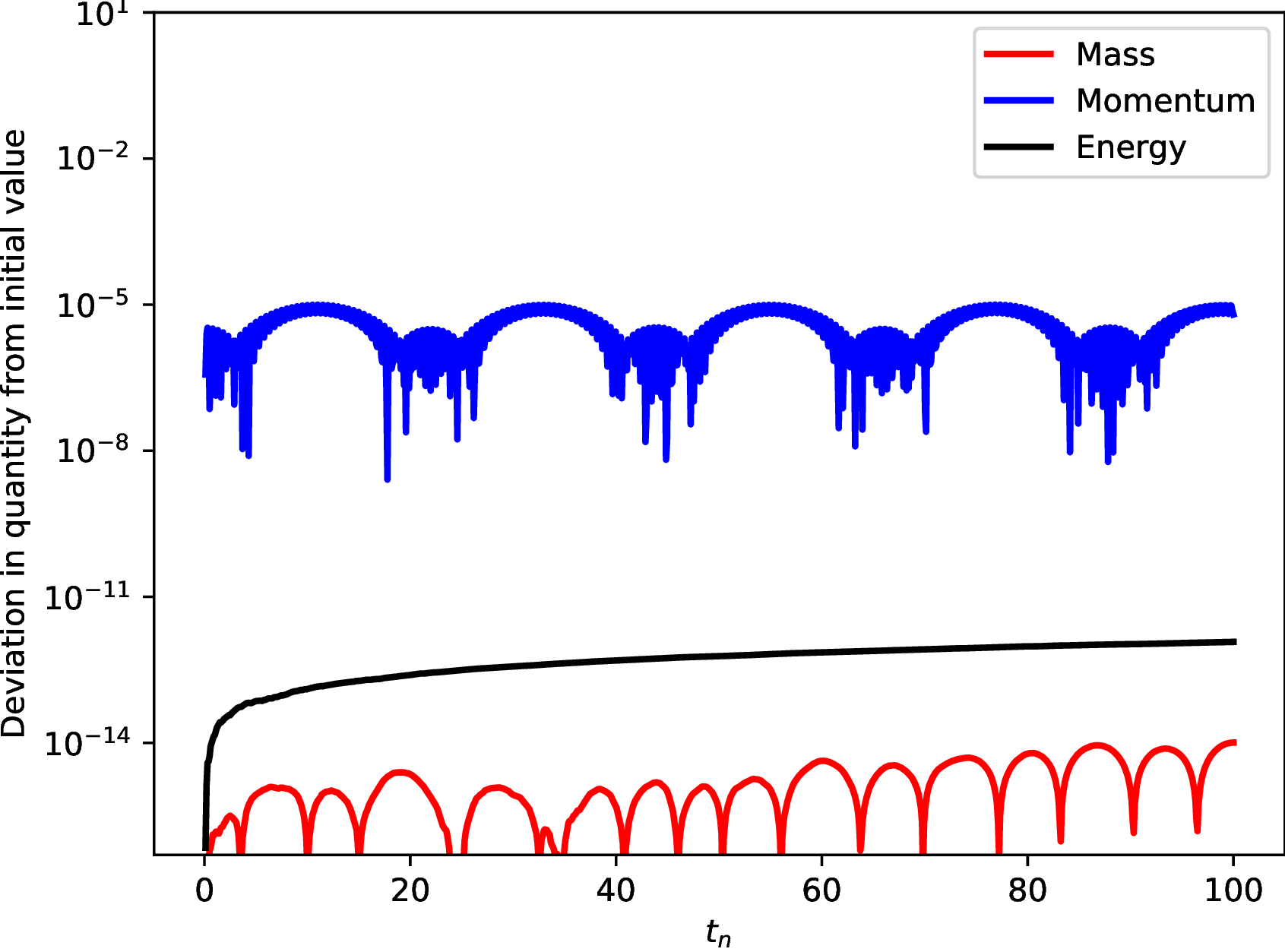}
  } \subfigure[][$q=1$ and $p=3$]{ \includegraphics[
    width=0.30\textwidth]{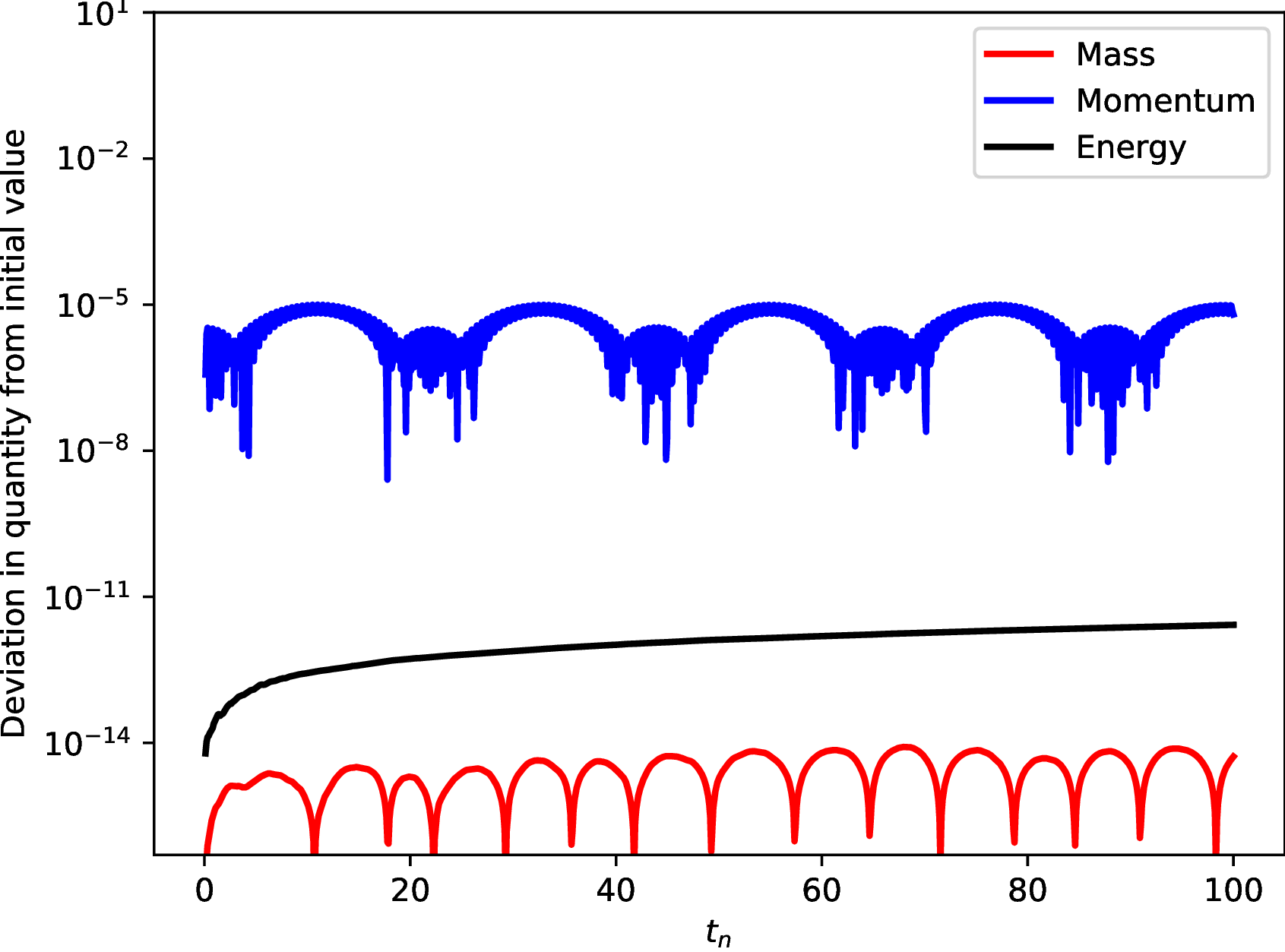}
  }
  \\
    \subfigure[][$q=2$ and $p=1$]{
    \includegraphics[
    width=0.30\textwidth]{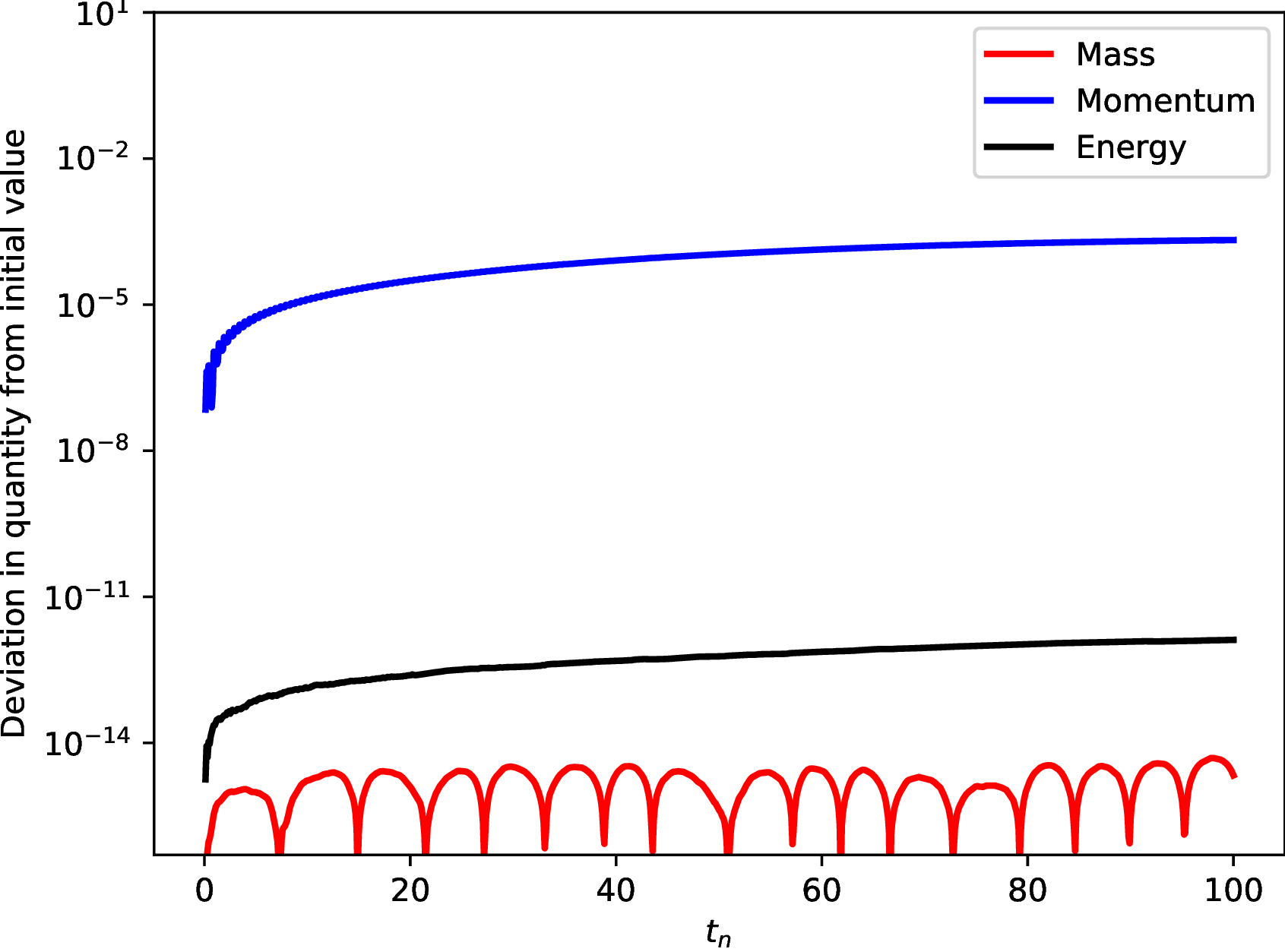}
  } \subfigure[][$q=2$ and $p=2$]{ \includegraphics[
    width=0.30\textwidth]{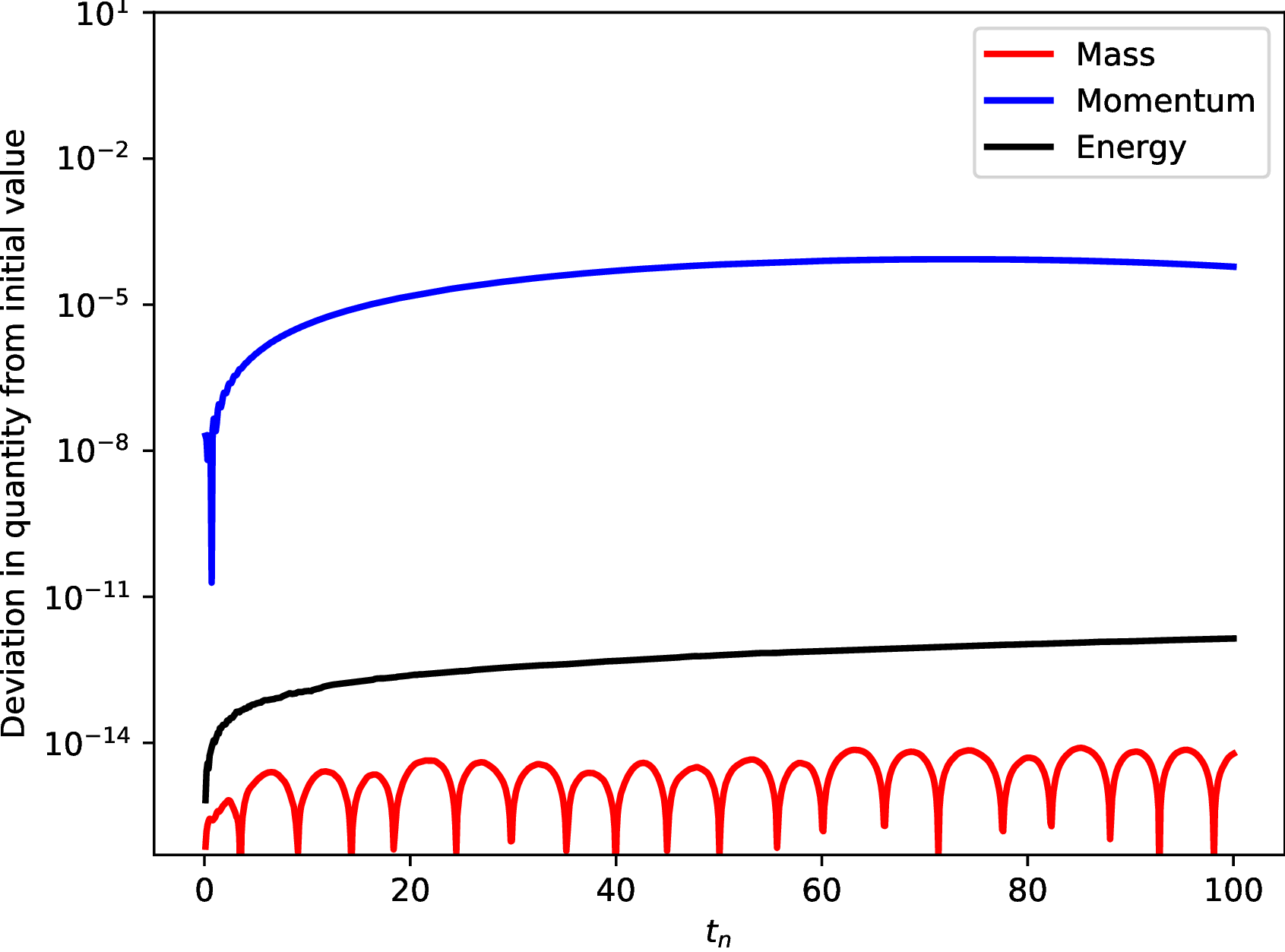}
  } \subfigure[][$q=2$ and $p=3$]{ \includegraphics[
    width=0.30\textwidth]{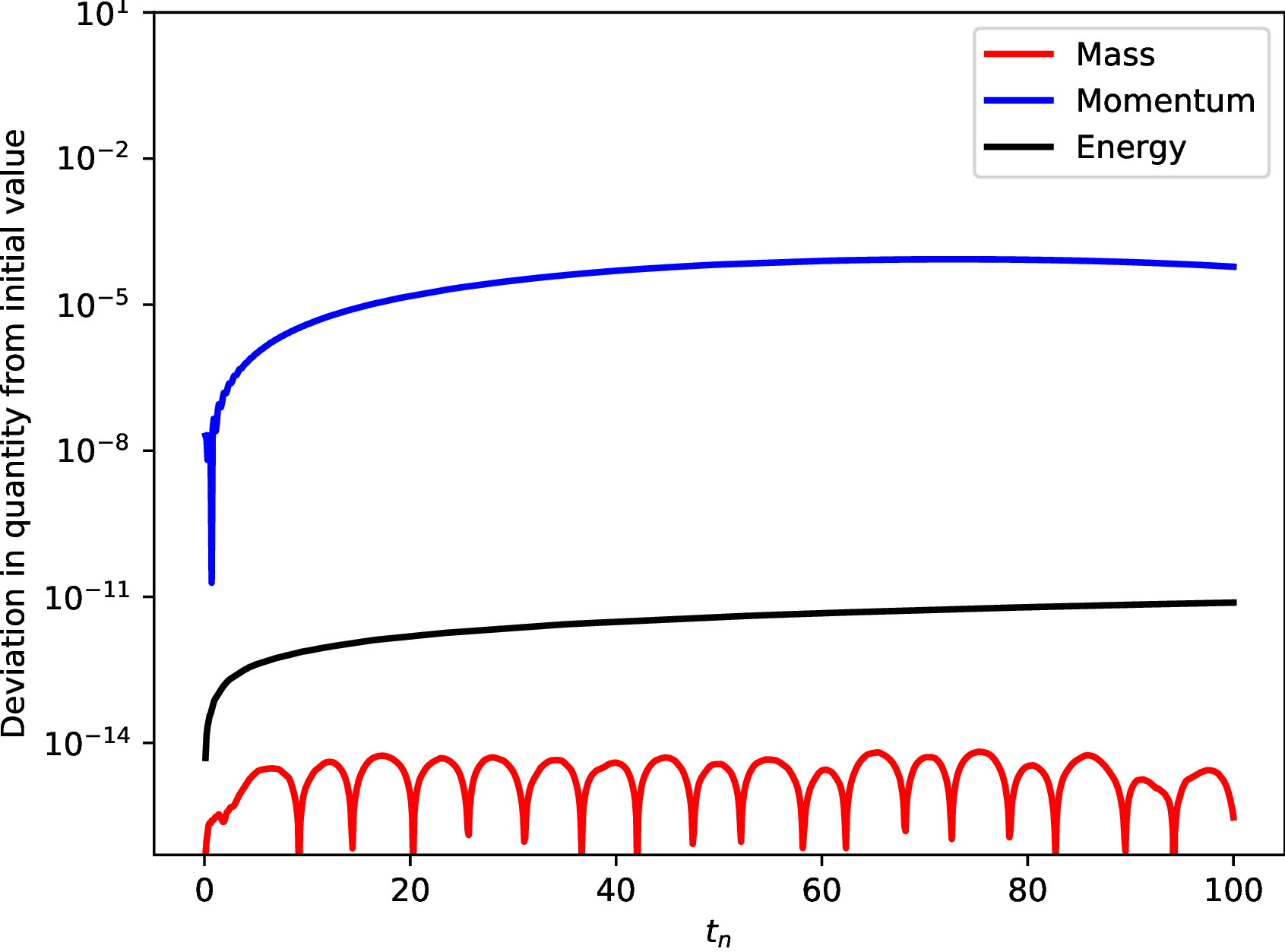}
  }

  \caption{The deviation in the conservation laws \eqref{eqn:nlwmass},
    \eqref{eqn:momentum} and \eqref{eqn:energy} for the
    spatially continuous finite element scheme \eqref{eqn:stlfem} for the
    nonlinear wave equation \eqref{eqn:nlw} initialised by the exact
    solution \eqref{eqn:lwexact} with varying temporal degree $q$ and
    spatial degree $p$. All simulations have uniform temporal and
    spatial elements with $\dt{}=0.1$ and $\dx{}=0.01$. We notice that
    the energy density is exactly preserved locally over time and
    globally over space. The deviation in momentum, while not exactly
    preserved, remains bounded over long time. \label{fig:nlw:dev}}
\end{figure}
We observe that the energy density is indeed conserved locally in
time and globally in space. Over long time errors below solver
tolerance propagate in the energy, with this propagation becoming more
significant for higher polynomial degree. For all simulations the
deviation in mass does not propagate over long time, and the momentum
remains bounded deviating by approximately $10^{-5}$.

Through a small perturbation to the finite element approximation,
namely that made within Remark \ref{rem:mclaw} with \eqref{eqn:mclaw},
we also investigate the numerical deviation in conservation laws in
Figure \ref{fig:nlw:devm}.
\begin{figure}[h]
  \centering
  \subfigure[][$q=0$ and $p=1$]{
    \includegraphics[
    width=0.30\textwidth]{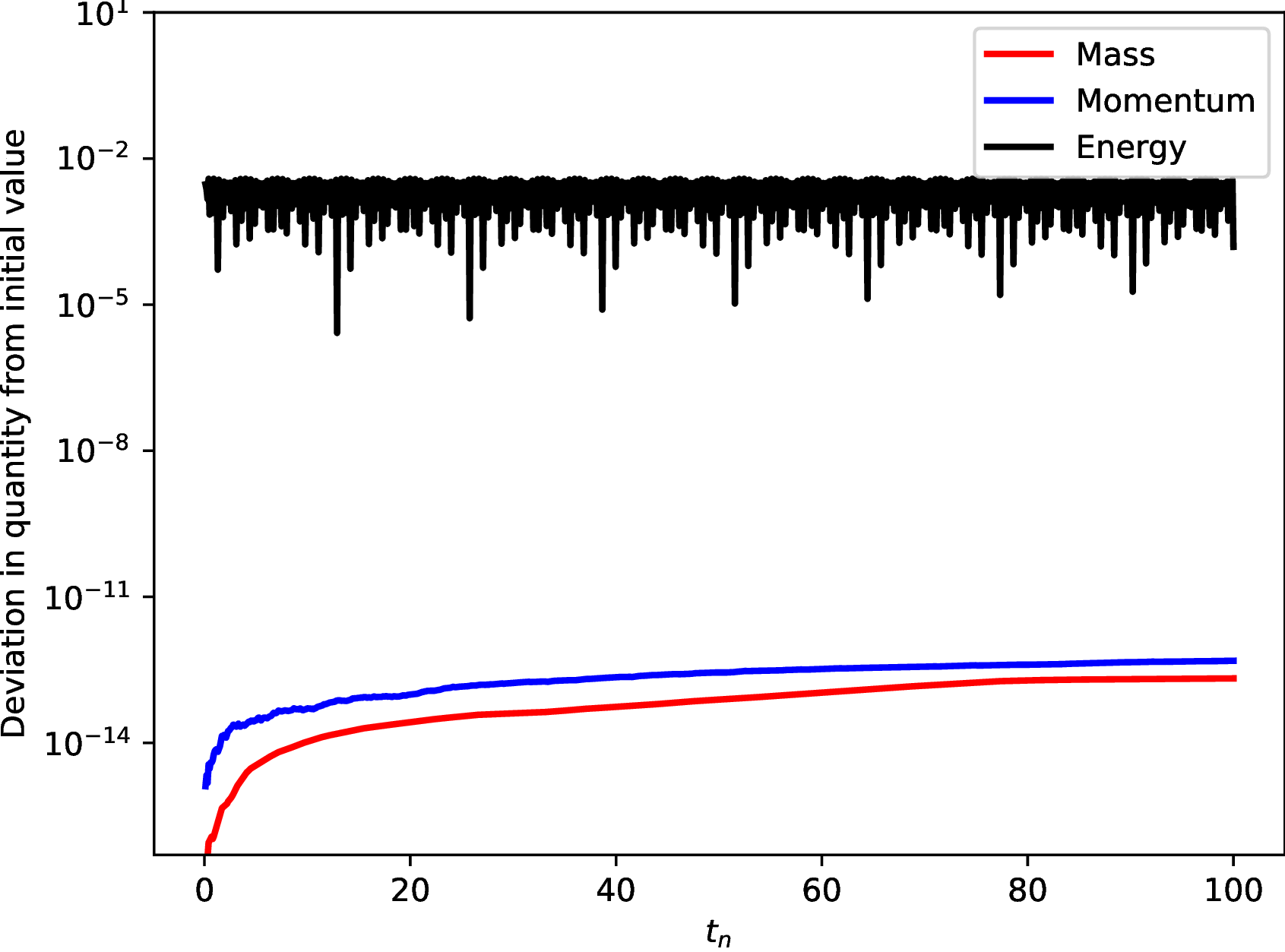}
  } \subfigure[][$q=0$ and $p=2$]{ \includegraphics[
    width=0.30\textwidth]{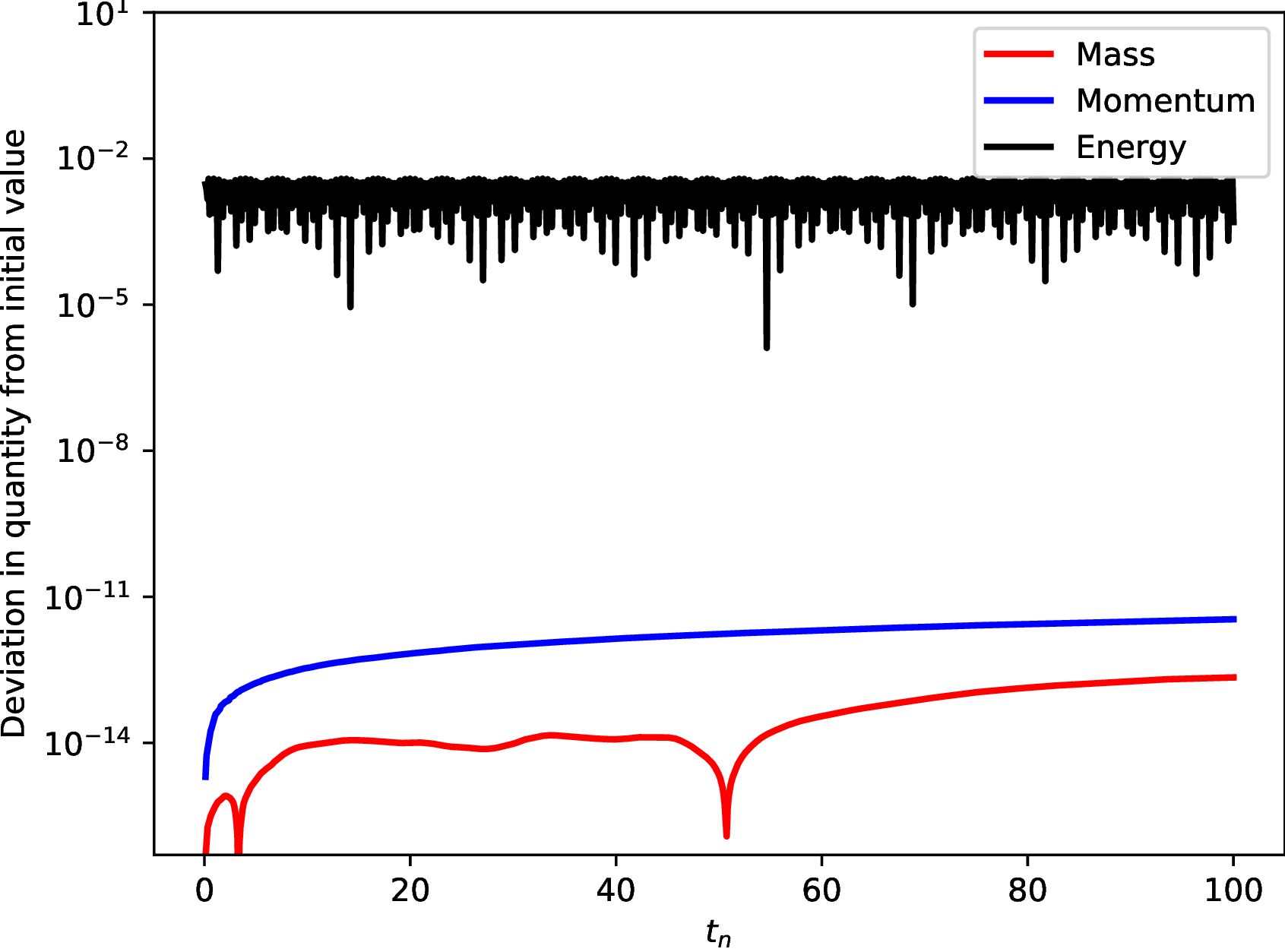}
  } \subfigure[][$q=0$ and $p=3$]{ \includegraphics[
    width=0.30\textwidth]{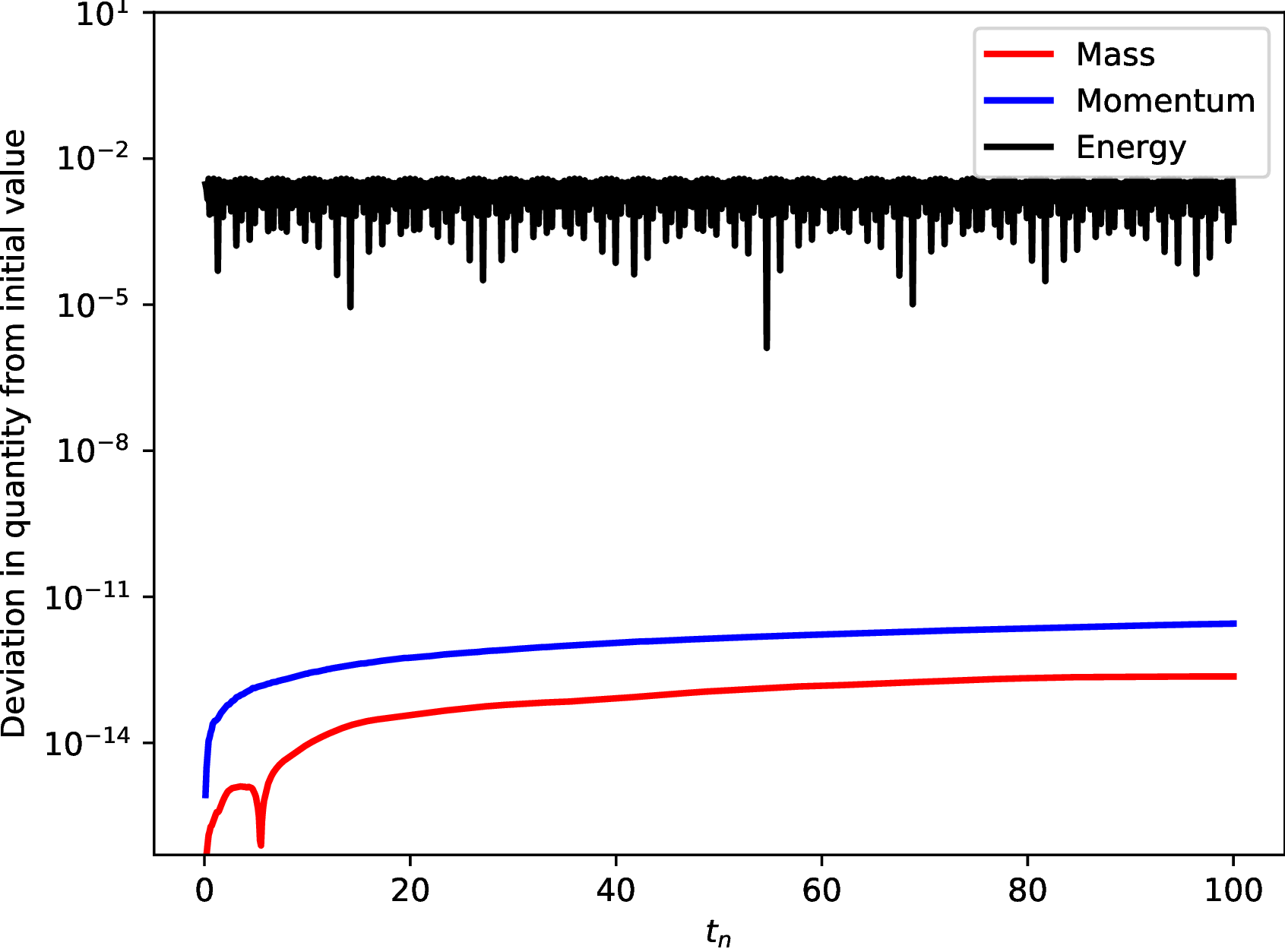}
  }
  \\
    \subfigure[][$q=1$ and $p=1$]{
    \includegraphics[
    width=0.30\textwidth]{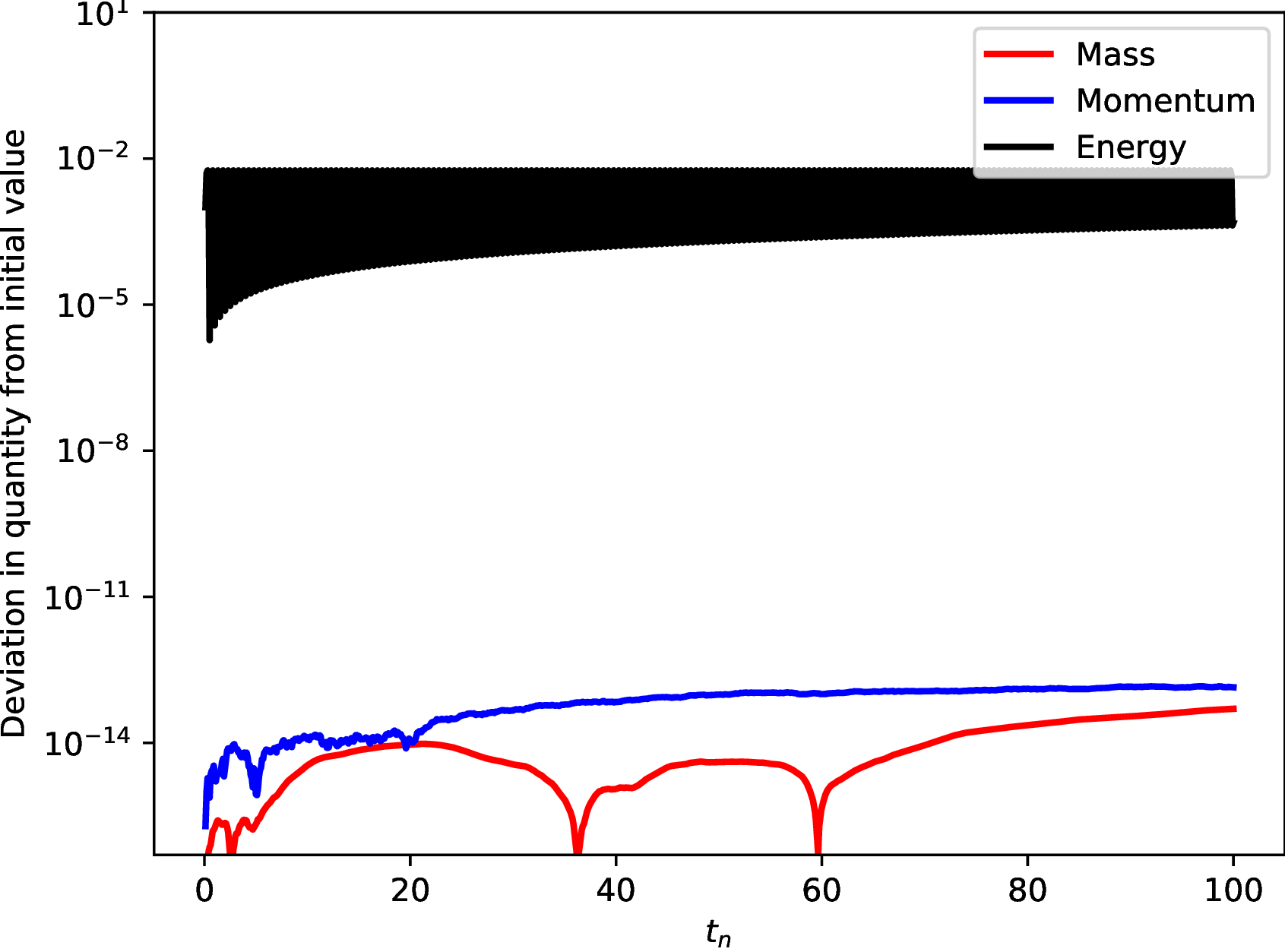}
  } \subfigure[][$q=1$ and $p=2$]{ \includegraphics[
    width=0.30\textwidth]{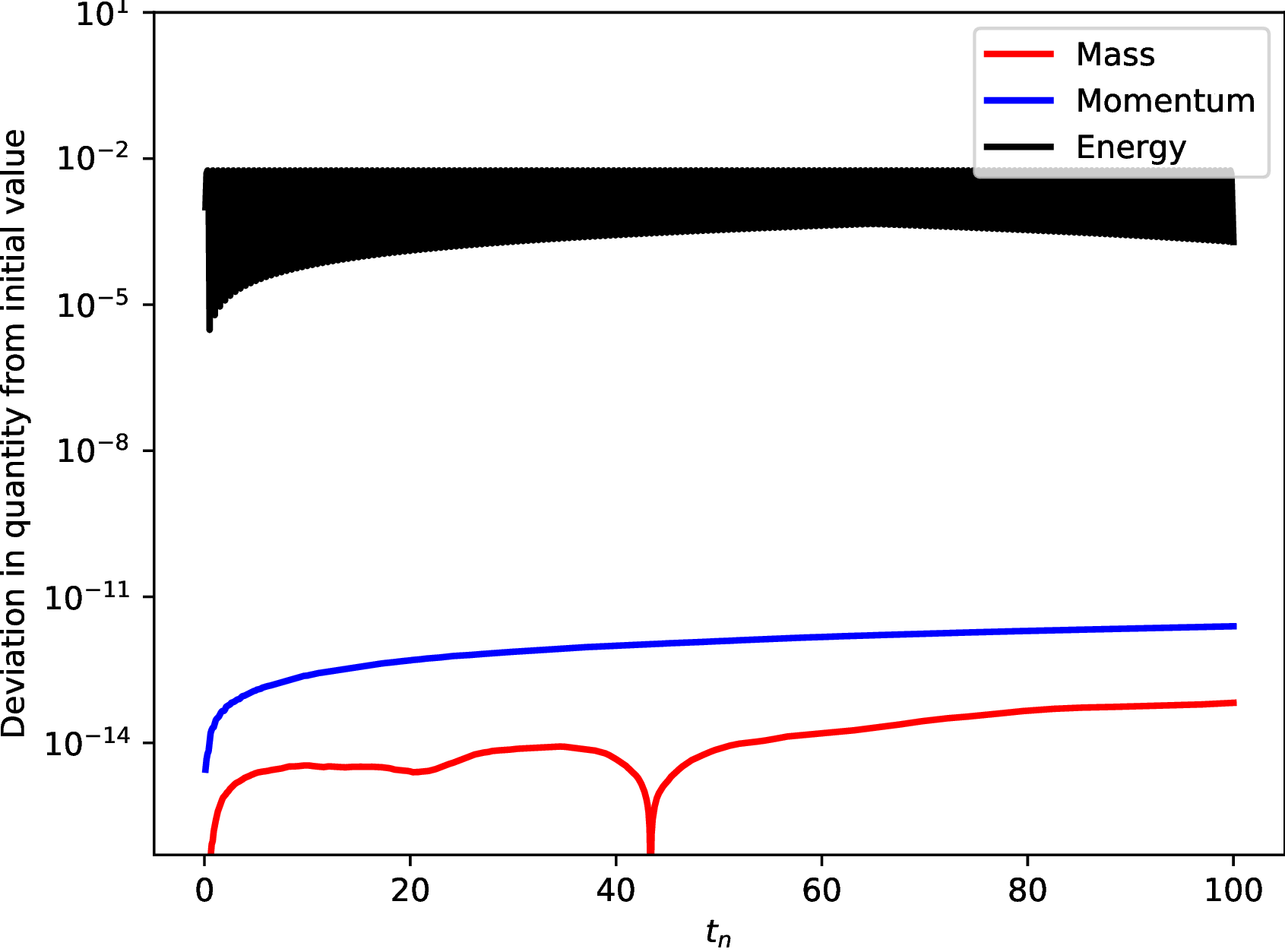}
  } \subfigure[][$q=1$ and $p=3$]{ \includegraphics[
    width=0.30\textwidth]{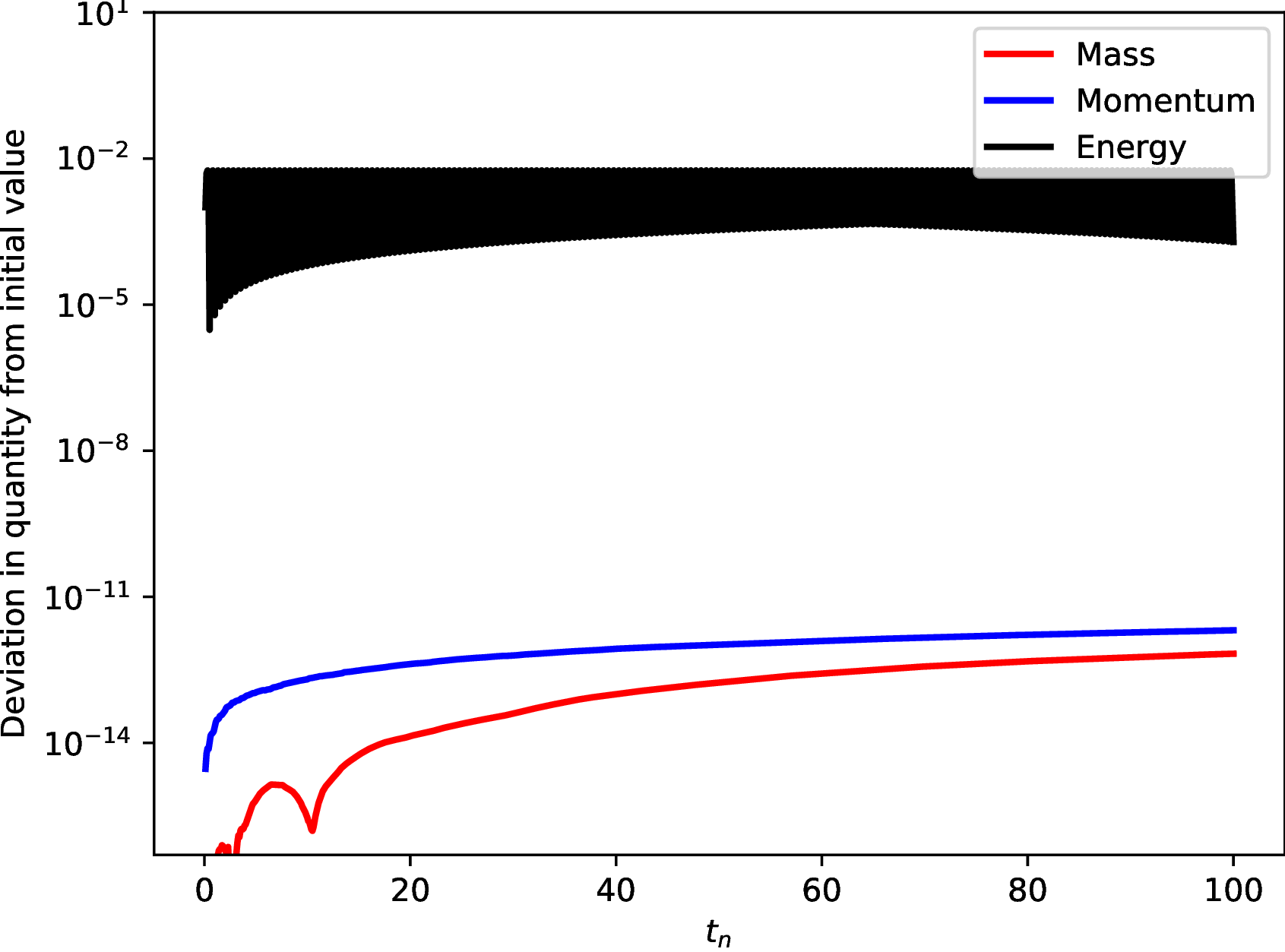}
  }
  \\
    \subfigure[][$q=2$ and $p=1$]{
    \includegraphics[
    width=0.30\textwidth]{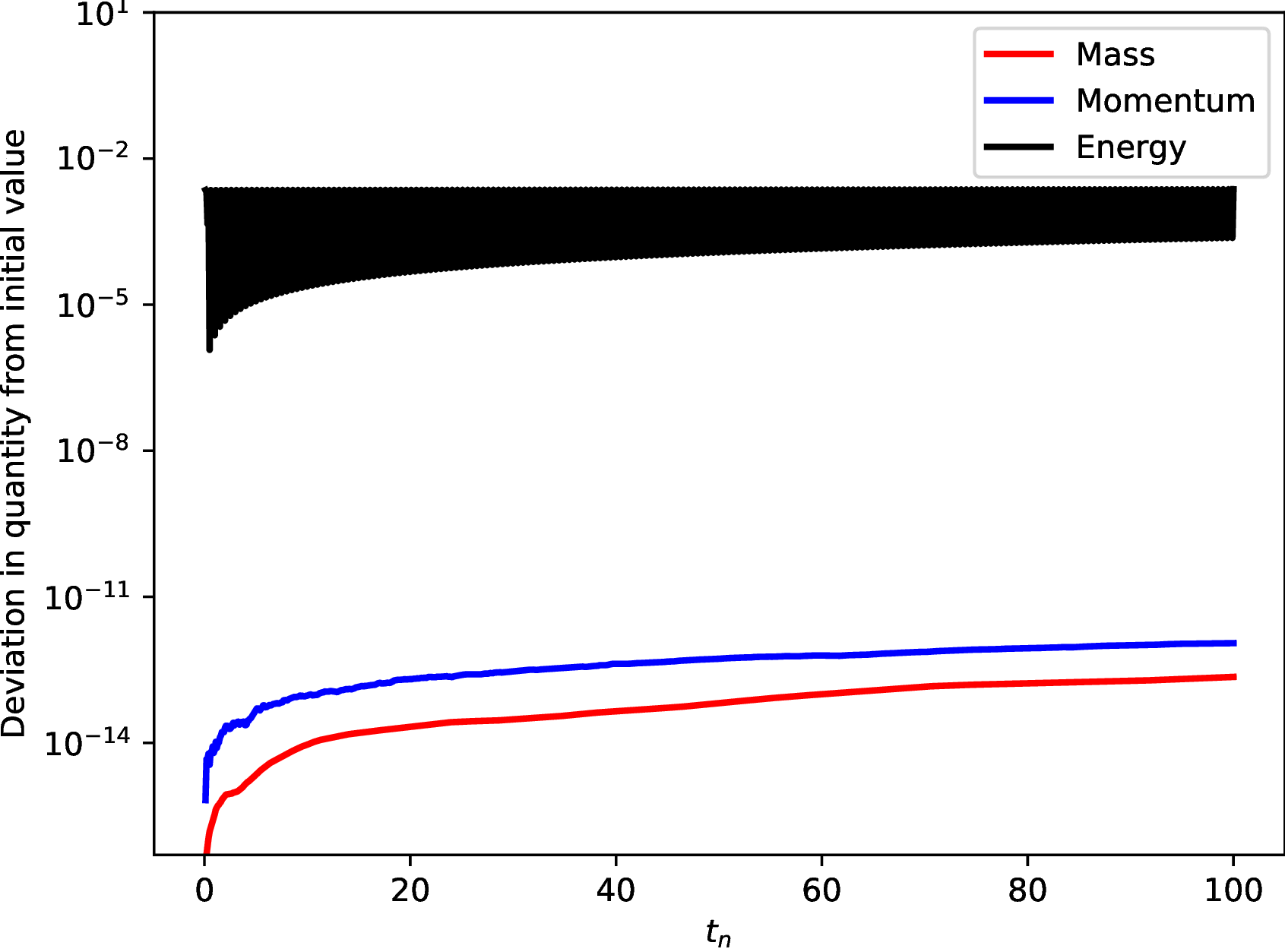}
  } \subfigure[][$q=2$ and $p=2$]{ \includegraphics[
    width=0.30\textwidth]{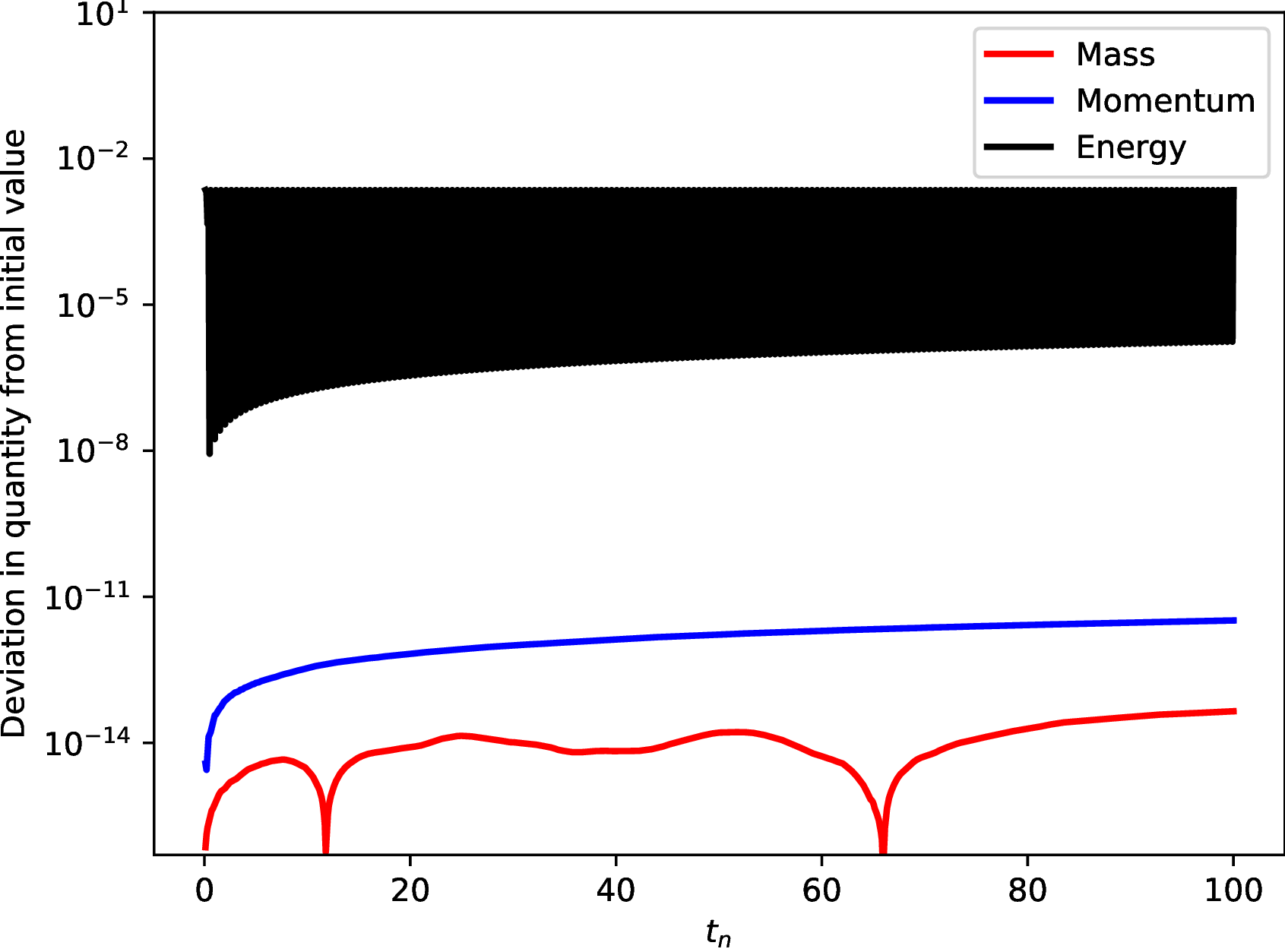}
  } \subfigure[][$q=2$ and $p=3$]{ \includegraphics[
    width=0.30\textwidth]{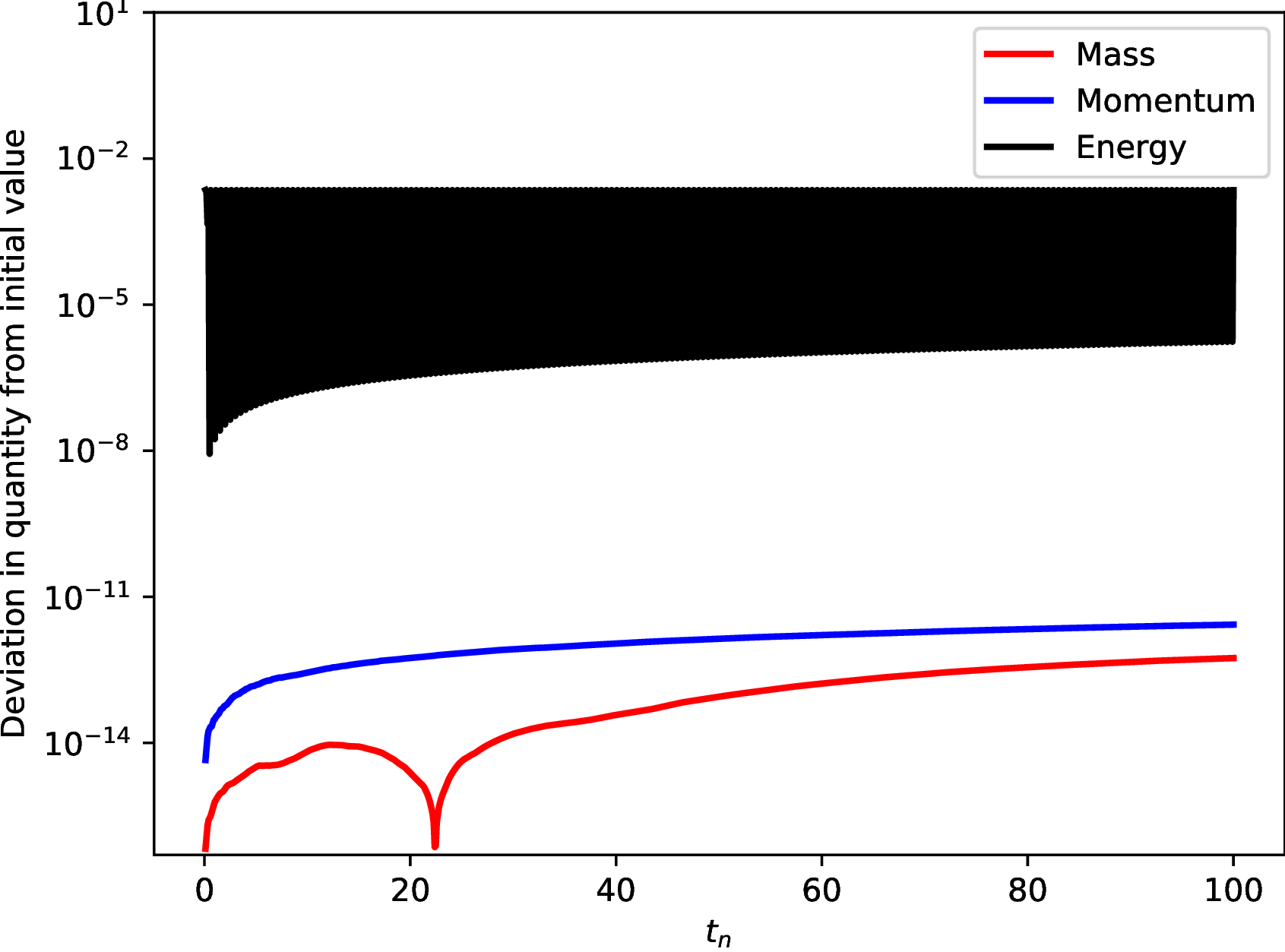}
  }

  \caption{The deviation in the conservation laws \eqref{eqn:nlwmass},
    \eqref{eqn:momentum} and \eqref{eqn:energy} for the alternative
    finite element scheme \eqref{eqn:stlfem} for the
    nonlinear wave equation \eqref{eqn:nlw} initialised by the exact
    solution \eqref{eqn:lwexact} with varying temporal degree $q$ and
    spatial degree $p$. All simulations have uniform temporal and
    spatial elements with $\dt{}=0.1$ and $\dx{}=0.01$. We notice that
    the momentum density is exactly preserved locally over time and
    globally over space. The deviation in energy, while not exactly
    preserved, remains bounded over long time. Comparing against
    Figure \ref{fig:nlw:dev} we notice the deviation in energy here is
    larger than the deviation in momentum for our primary
    discretisation \eqref{eqn:stfem}, and does not globally decrease
    as we increase polynomial degrees. \label{fig:nlw:devm}}
\end{figure}
We observe that while the modified scheme \eqref{eqn:mstfem} does
preserve the mass and momentum density locally over time and globally
over space, the deviation of both propagates over long time leading to
significant errors. In addition, the deviation in energy remains
globally by $10^{-2}$ regardless of the polynomial degree of the
approximation.

\subsection{Test 3: The nonlinear Schr\"odinger equation}

We may explicitly express the multisymplectic formulation of the NLS
equation, given in Example \ref{ex:nls}, as
\begin{equation} \label{eqn:nls}
  \begin{split}
    - v_t + p_x + \frac12 u\bc{u^2+v^2} & = 0 \\
    u_t + q_x  + \frac12 v\bc{u^2+v^2}  & = 0 \\
    - u_x + p & = 0 \\
    - v_x + q & = 0
    ,
  \end{split}
\end{equation}
see for example \cite{islas01gif}. Throughout our numerical study, we
initialise simulations of \eqref{eqn:nls} with the soliton solution
\begin{equation} \label{eqn:nlsexact}
  \begin{split}
    u(t,x) & = 2\cos{t}\sech{x} \\
    v(t,x) & = 2\sin{t}\sech{x} \\
    p(t,x) & = -2\cos{t}\sinh{x}\sech{x}^2 \\
    q(t,x) & = -2\sin{t}\sinh{x}\sech{x}^2
    ,
  \end{split}
\end{equation}
over the stretched periodic spatial domain of $x\in[0,40)$. We note
that \eqref{eqn:nlsexact} is defined over $\mathbb{R}$, however, we
may accurately approximate it on a periodic domain as it is a
travelling wave.


Similarly to the linear wave equation, we may benchmark spatially
continuous scheme for the nonlinear Schr{\"o}dinger equation for
various polynomial degree, as can be seen in Figure \ref{fig:nls:eoc}.
\begin{figure}[h]
  \centering
  \subfigure[][$q=0$ and $p=1$]{
    \includegraphics[
    width=0.30\textwidth]{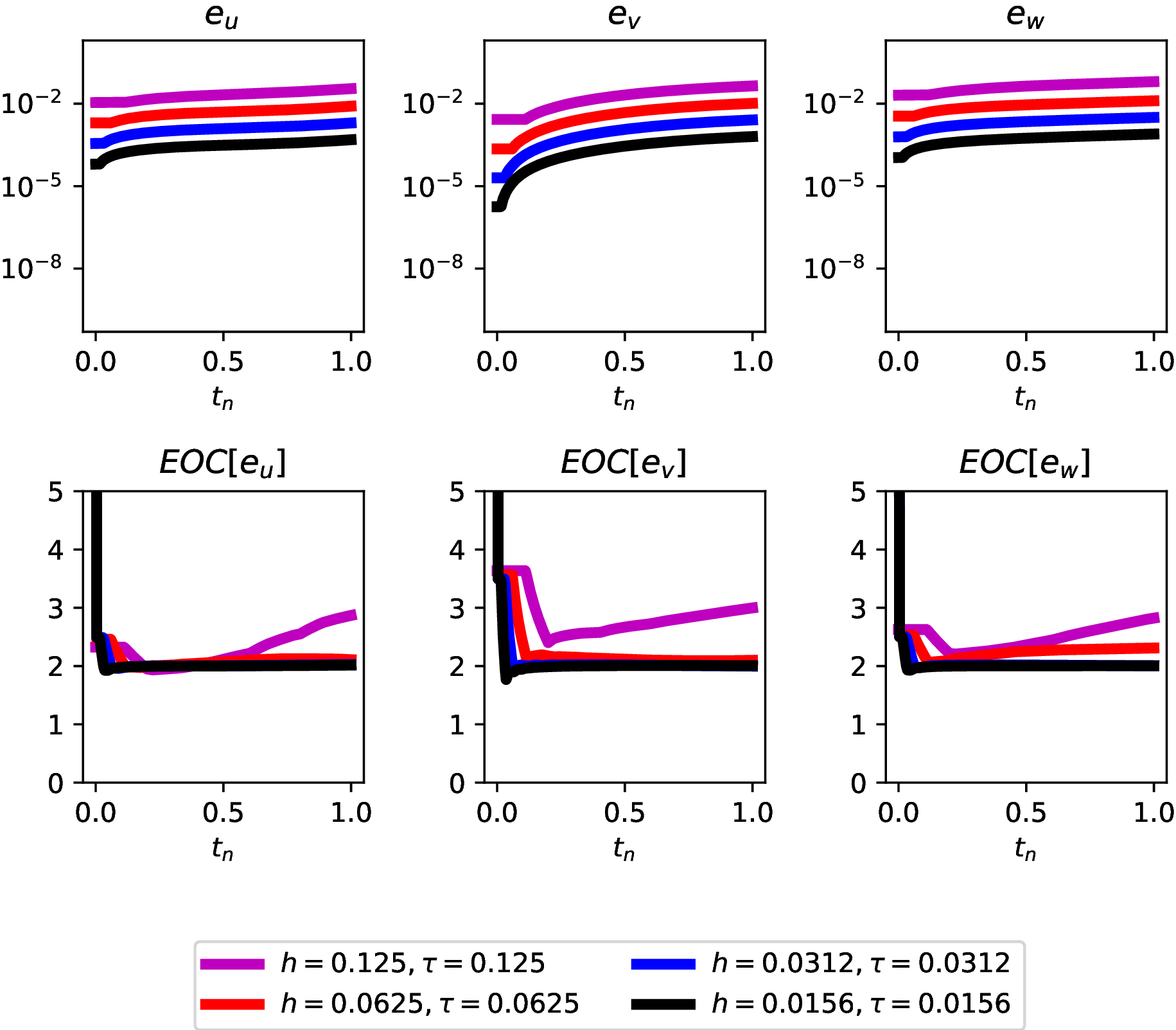}
  } \subfigure[][$q=0$ and $p=2$]{ \includegraphics[
    width=0.30\textwidth]{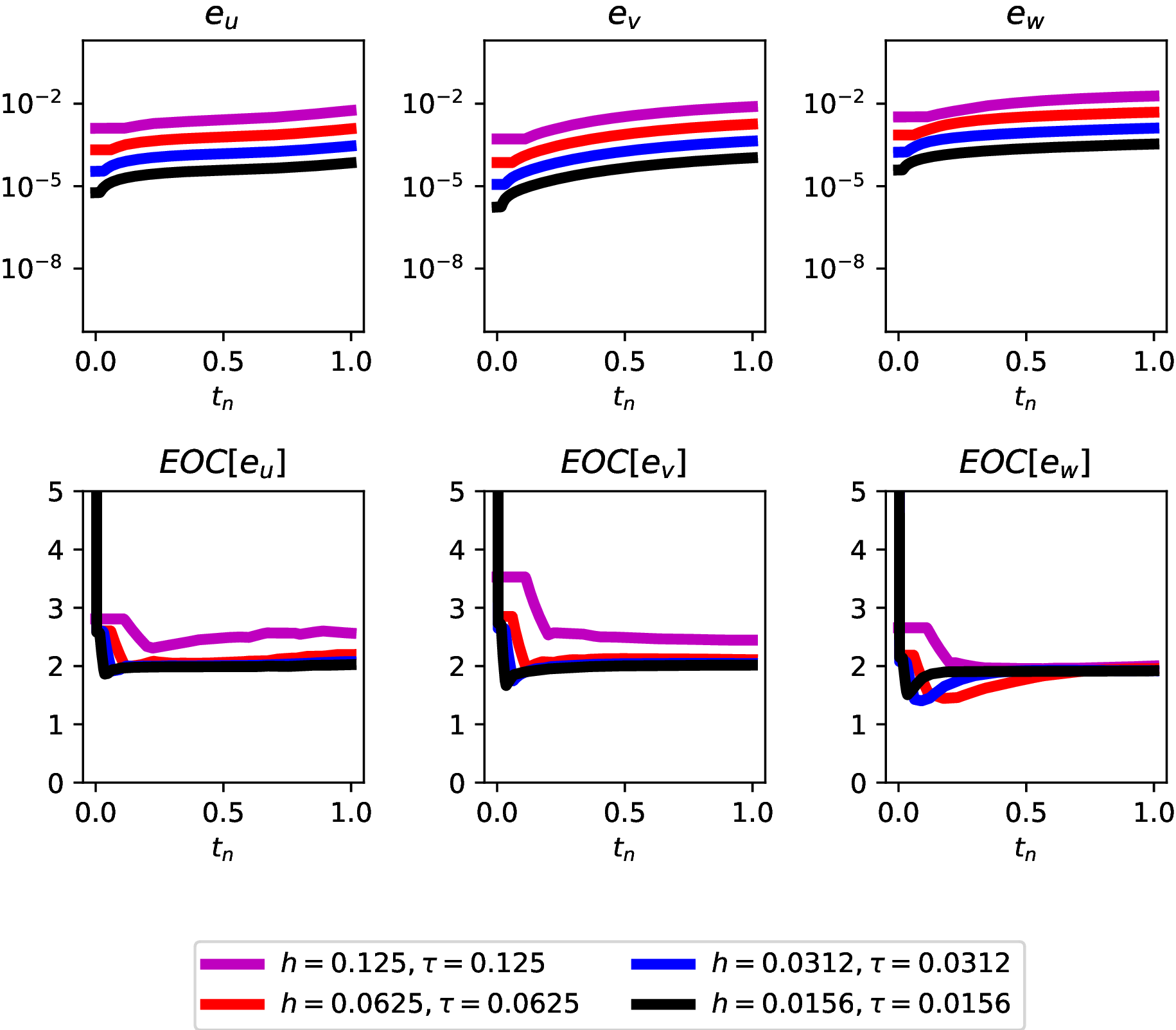}
  } \subfigure[][$q=0$ and $p=3$]{ \includegraphics[
    width=0.30\textwidth]{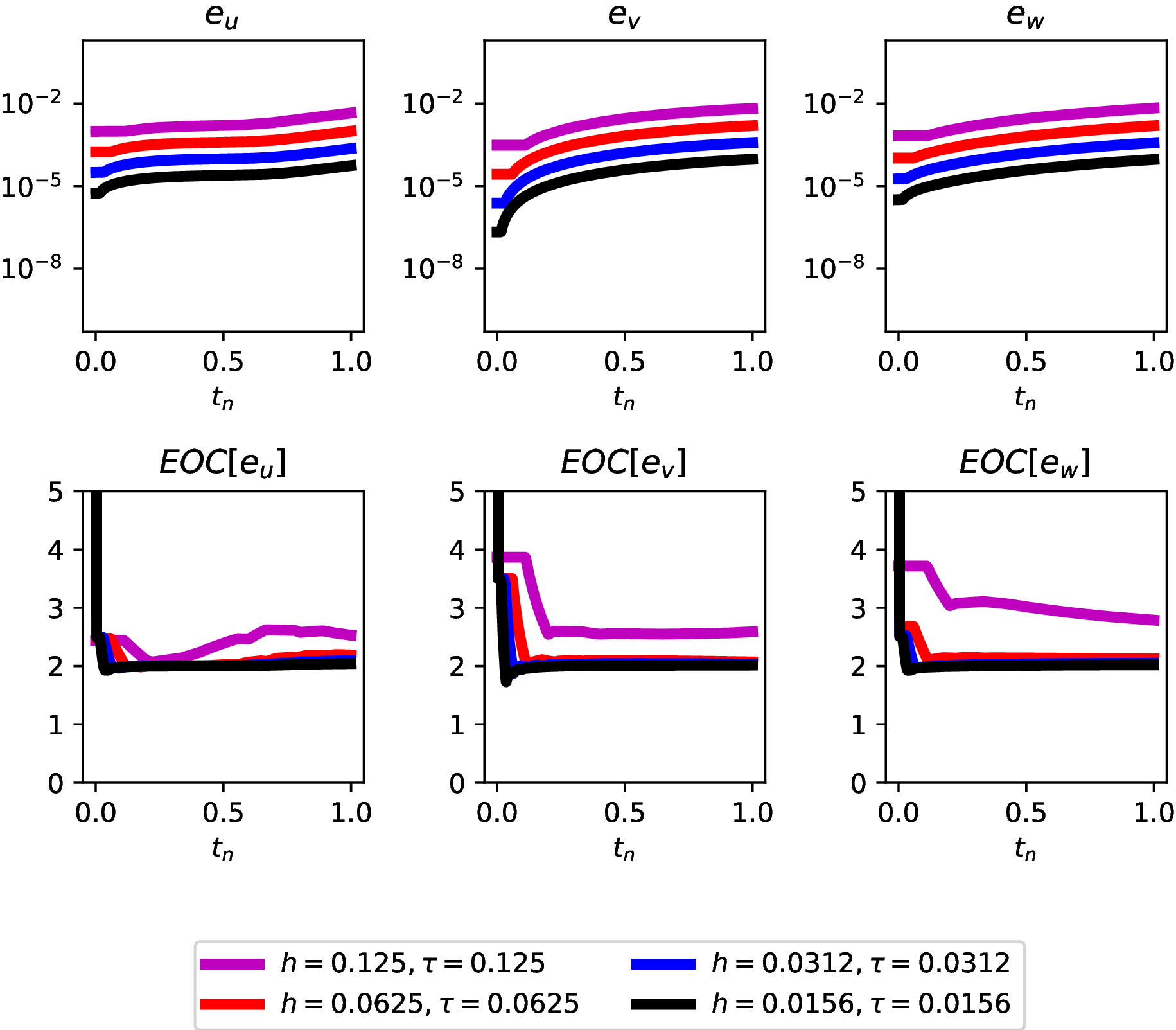}
  }
  \\
    \subfigure[][$q=1$ and $p=1$]{
    \includegraphics[
    width=0.30\textwidth]{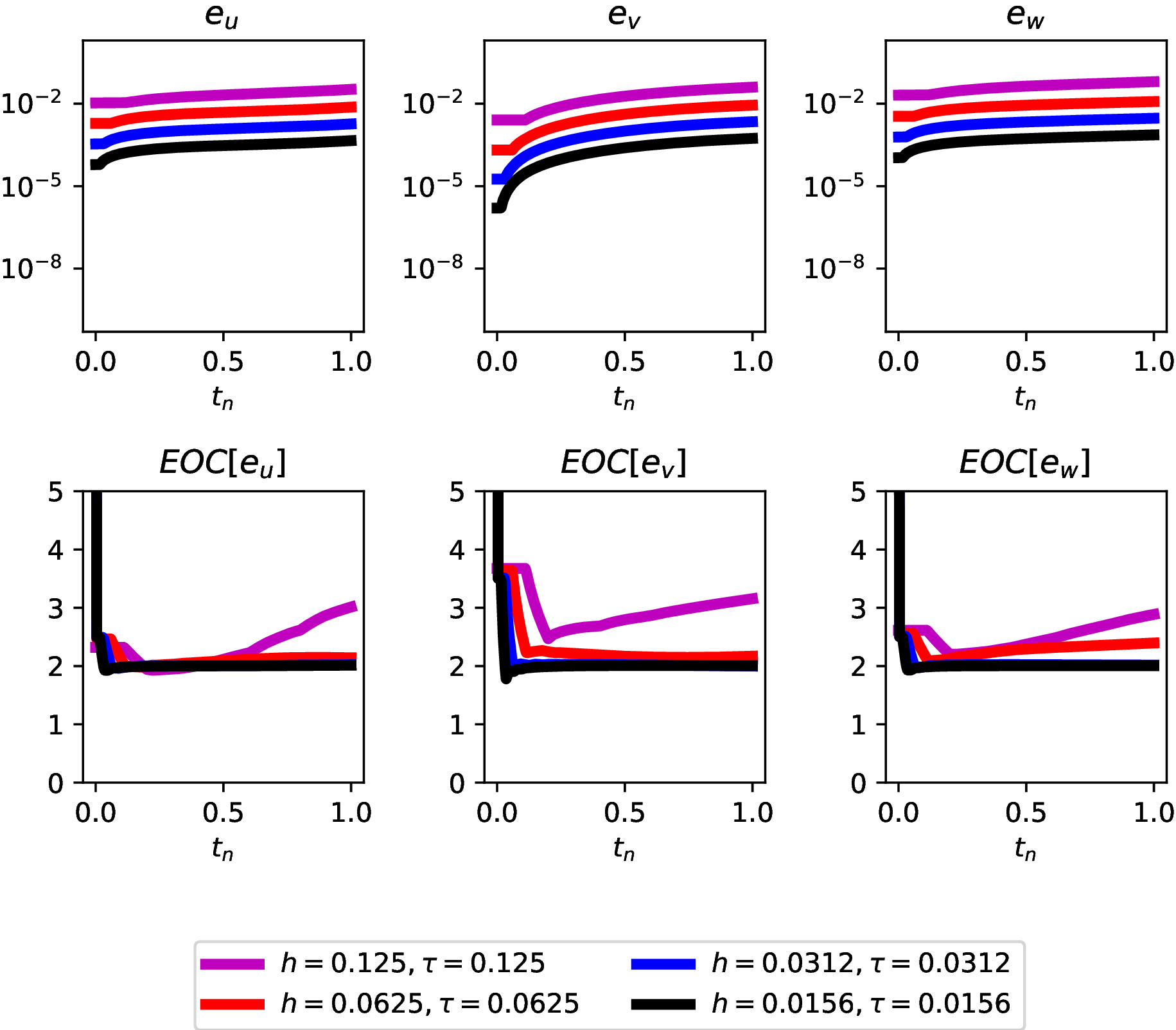}
  } \subfigure[][$q=1$ and $p=2$]{ \includegraphics[
    width=0.30\textwidth]{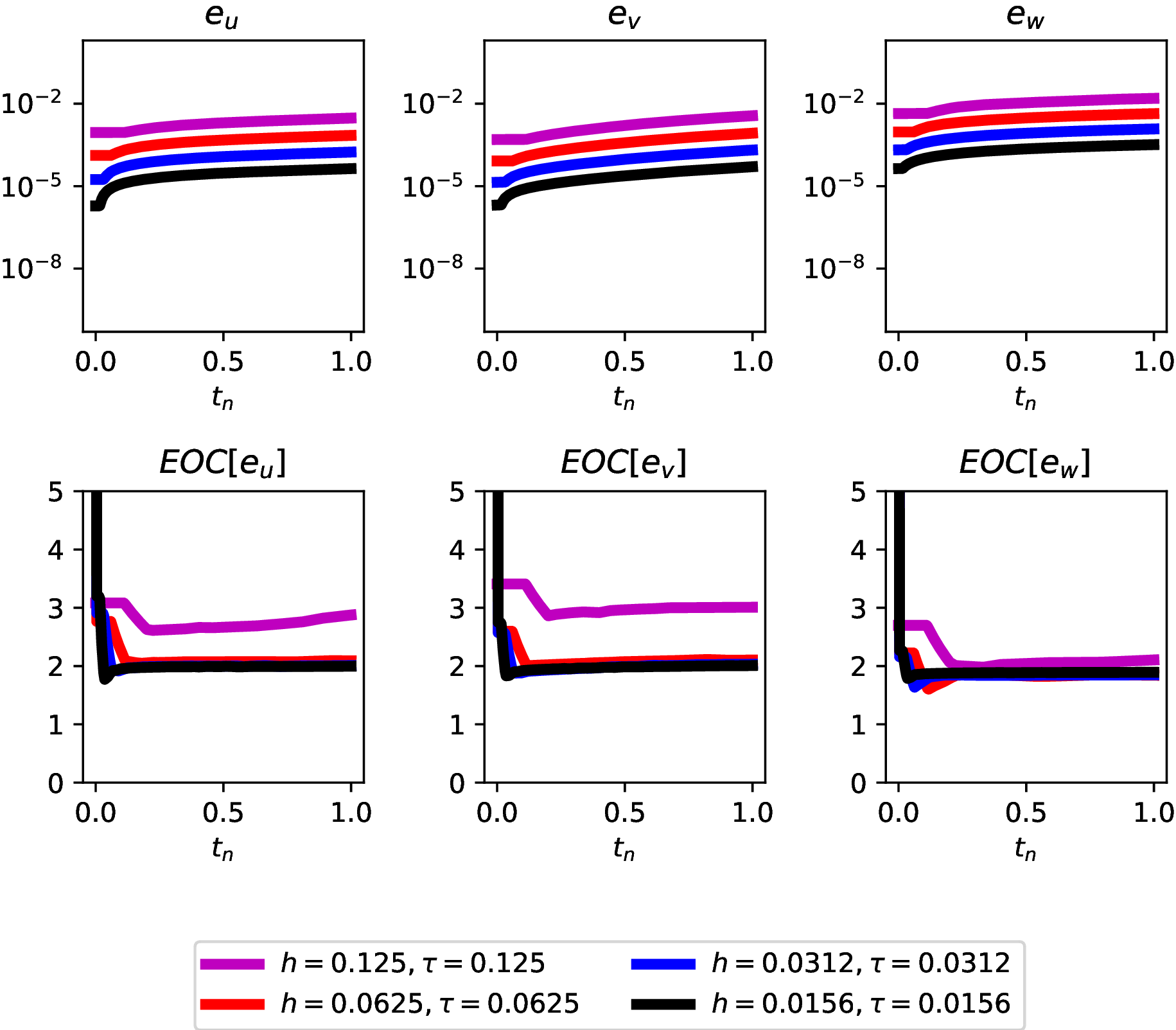}
  } \subfigure[][$q=1$ and $p=3$]{ \includegraphics[
    width=0.30\textwidth]{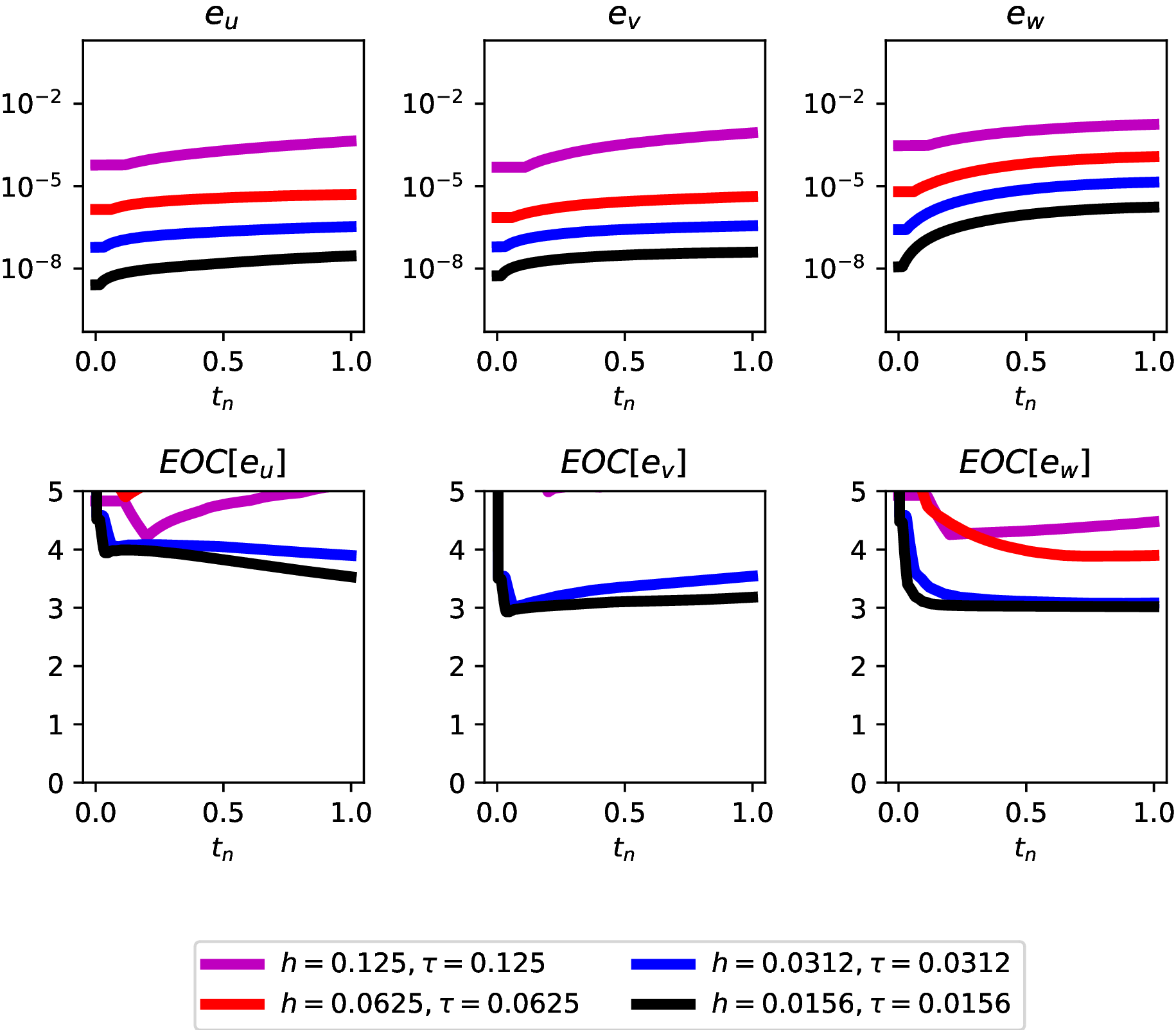}
  }
  \\
    \subfigure[][$q=2$ and $p=1$]{
    \includegraphics[
    width=0.30\textwidth]{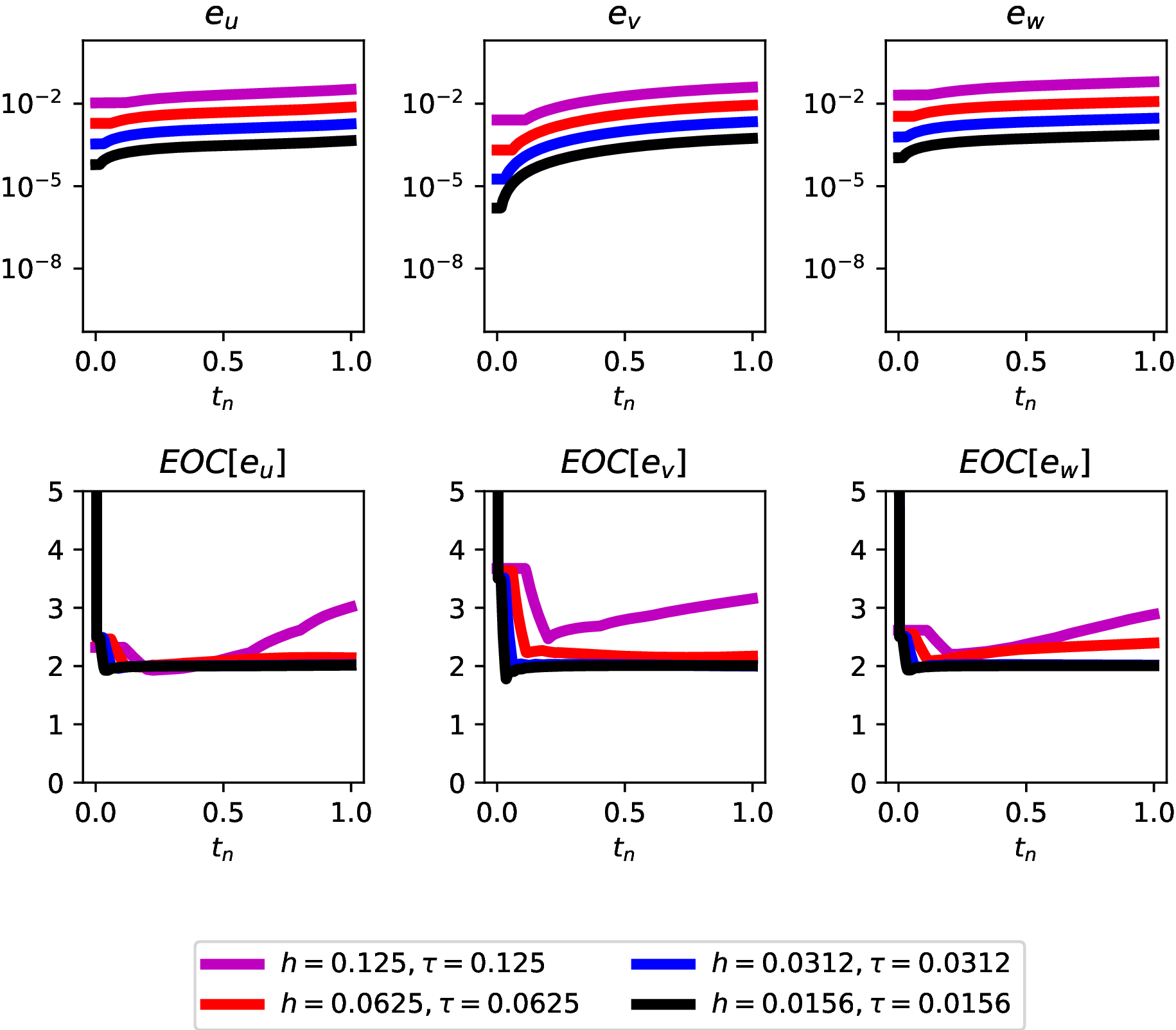}
  } \subfigure[][$q=2$ and $p=2$]{ \includegraphics[
    width=0.30\textwidth]{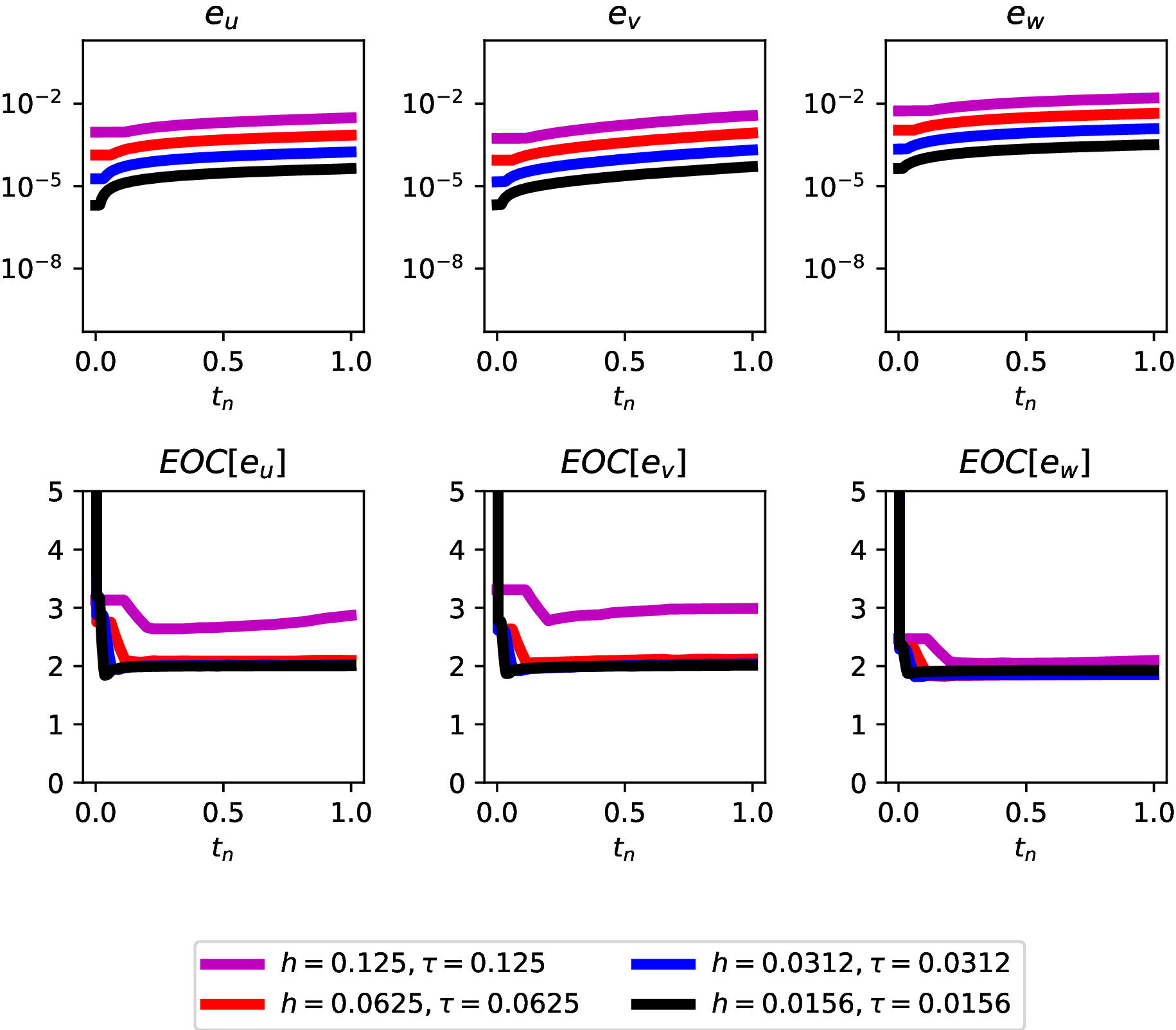}
  } \subfigure[][$q=2$ and $p=3$]{ \includegraphics[
    width=0.30\textwidth]{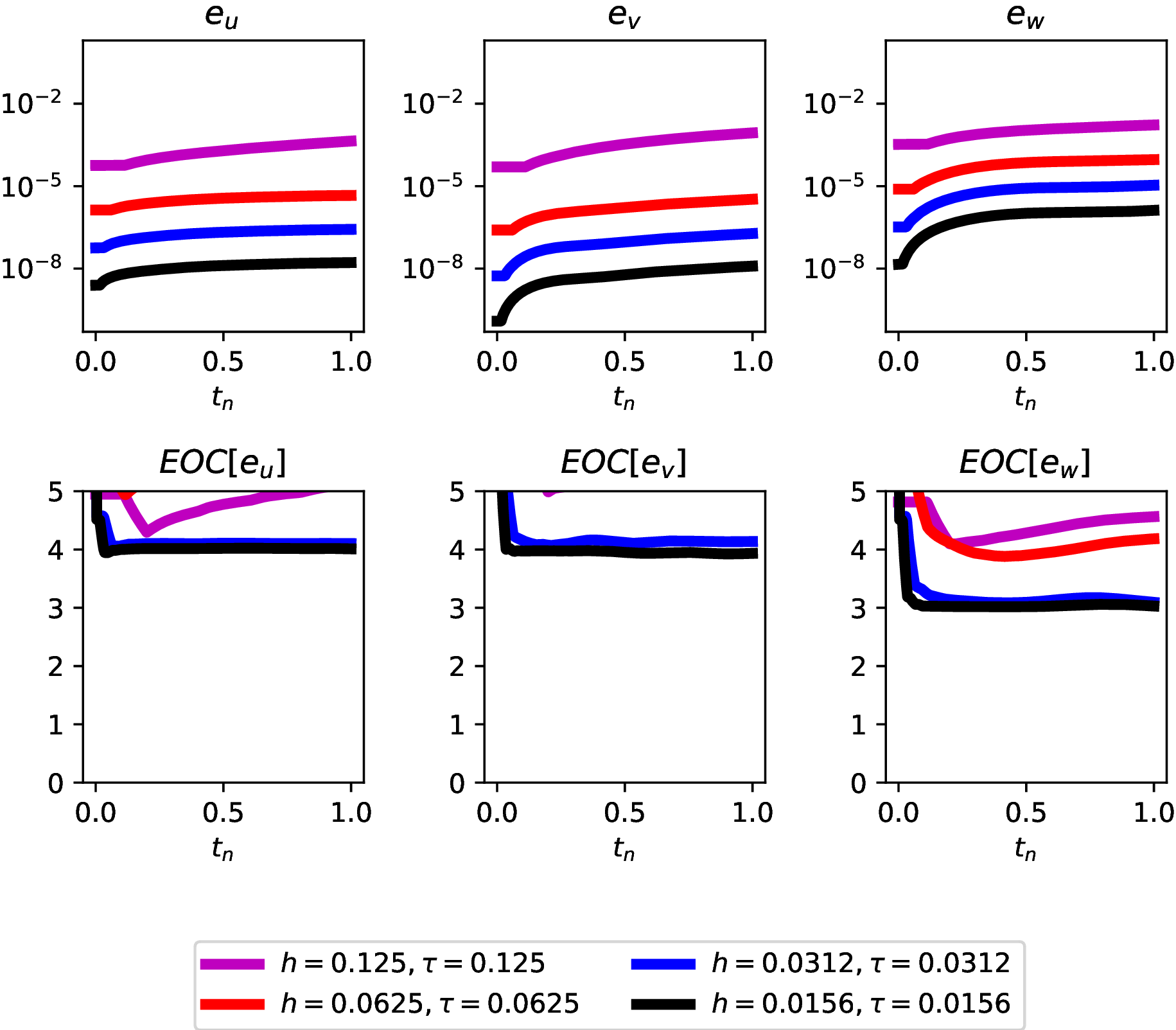}
  }

  \caption{The error of the spatially continuous finite element scheme
    \eqref{eqn:stlfem} for the nonlinear Schr{\"o}dinger equation \eqref{eqn:nls}
    initialised by the exact solution \eqref{eqn:nlsexact}. Here
    $e_u,e_v,e_w$ denotes the errors in the components $U,V,W$
    measured in the Bochner norm \eqref{eqn:enorm}. Below we plot the
    EOC \eqref{eqn:eoc} corresponding to each of these errors. \label{fig:nls:eoc}}
\end{figure}
Instead simulating the spatially discontinuous finite element
approximation \eqref{eqn:stfemdg} we obtain Figure \ref{fig:nls:eoc:dg}.
\begin{figure}[h]
  \centering
  \subfigure[][$q=0$ and $p=1$]{
    \includegraphics[
    width=0.30\textwidth]{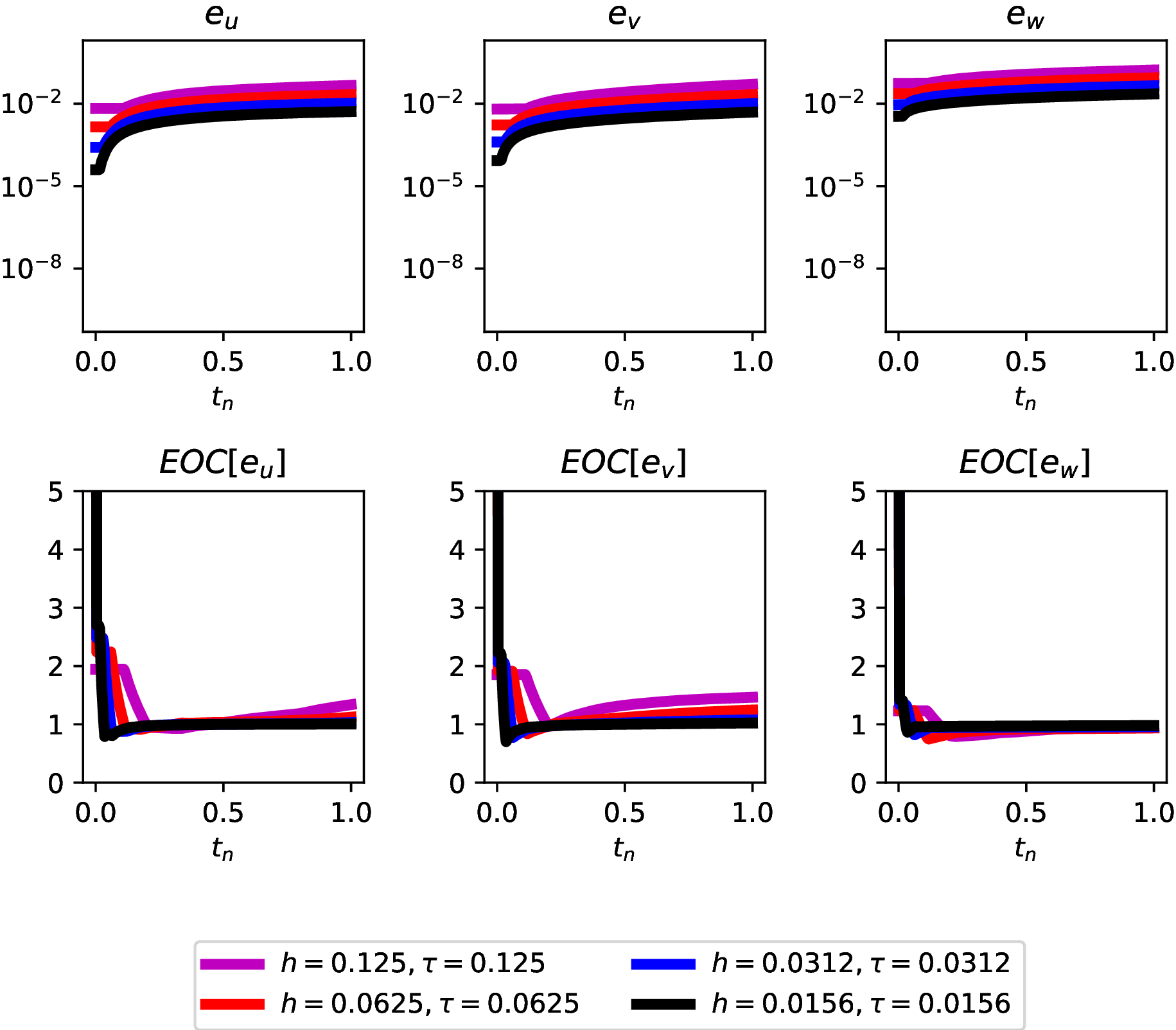}
  } \subfigure[][$q=0$ and $p=2$]{ \includegraphics[
    width=0.30\textwidth]{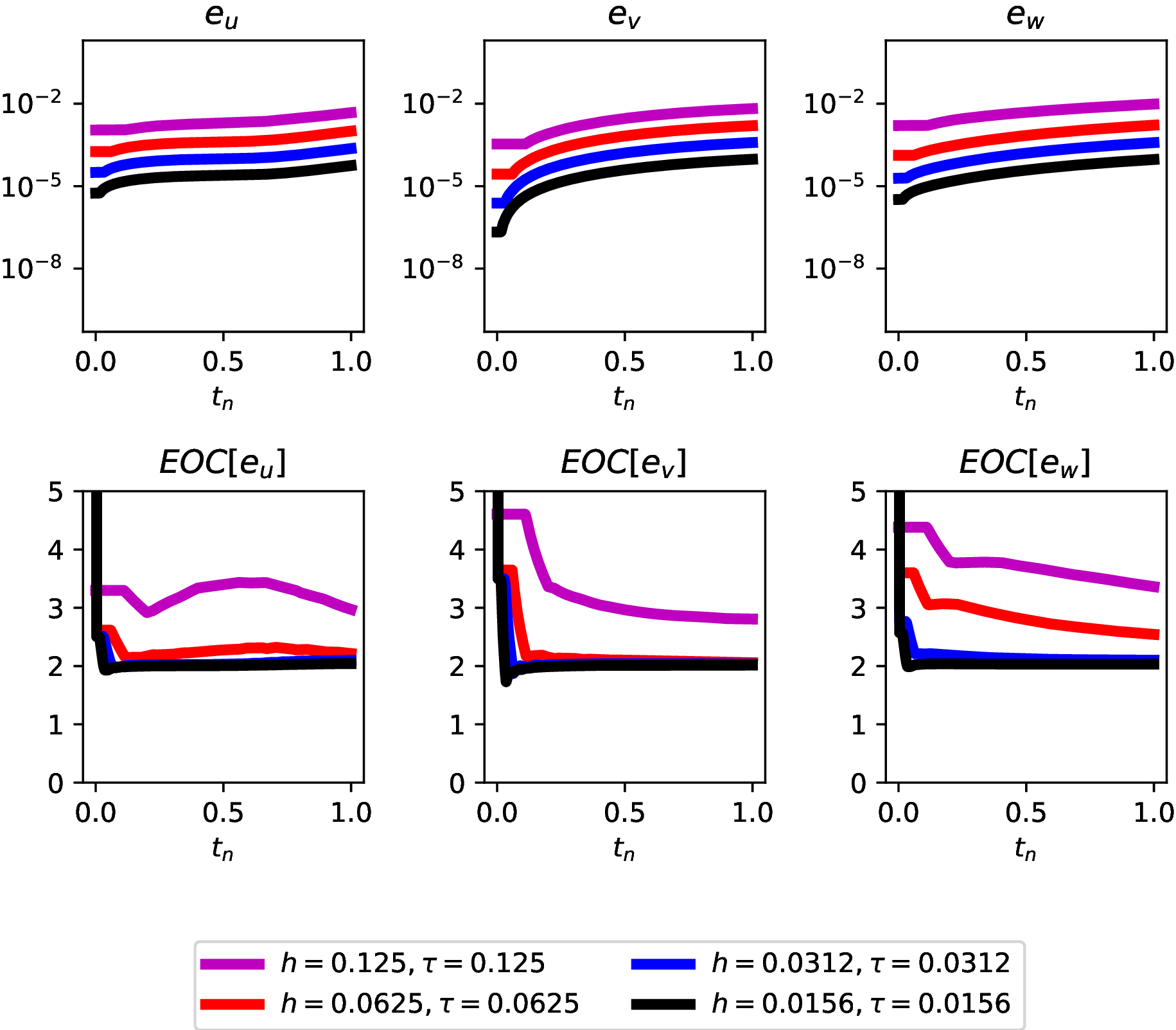}
  } \subfigure[][$q=0$ and $p=3$]{ \includegraphics[
    width=0.30\textwidth]{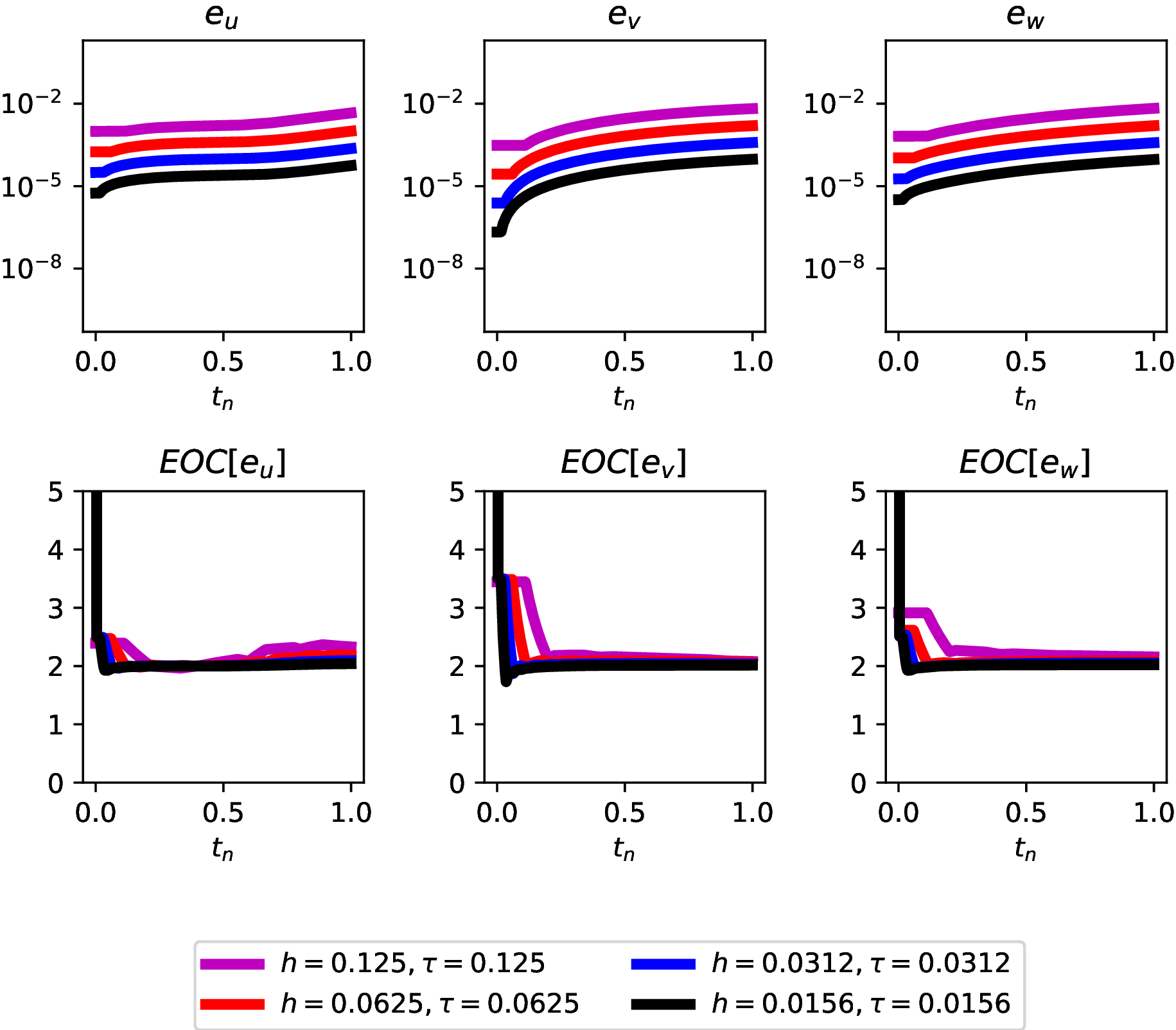}
  }
  \\
    \subfigure[][$q=1$ and $p=1$]{
    \includegraphics[
    width=0.30\textwidth]{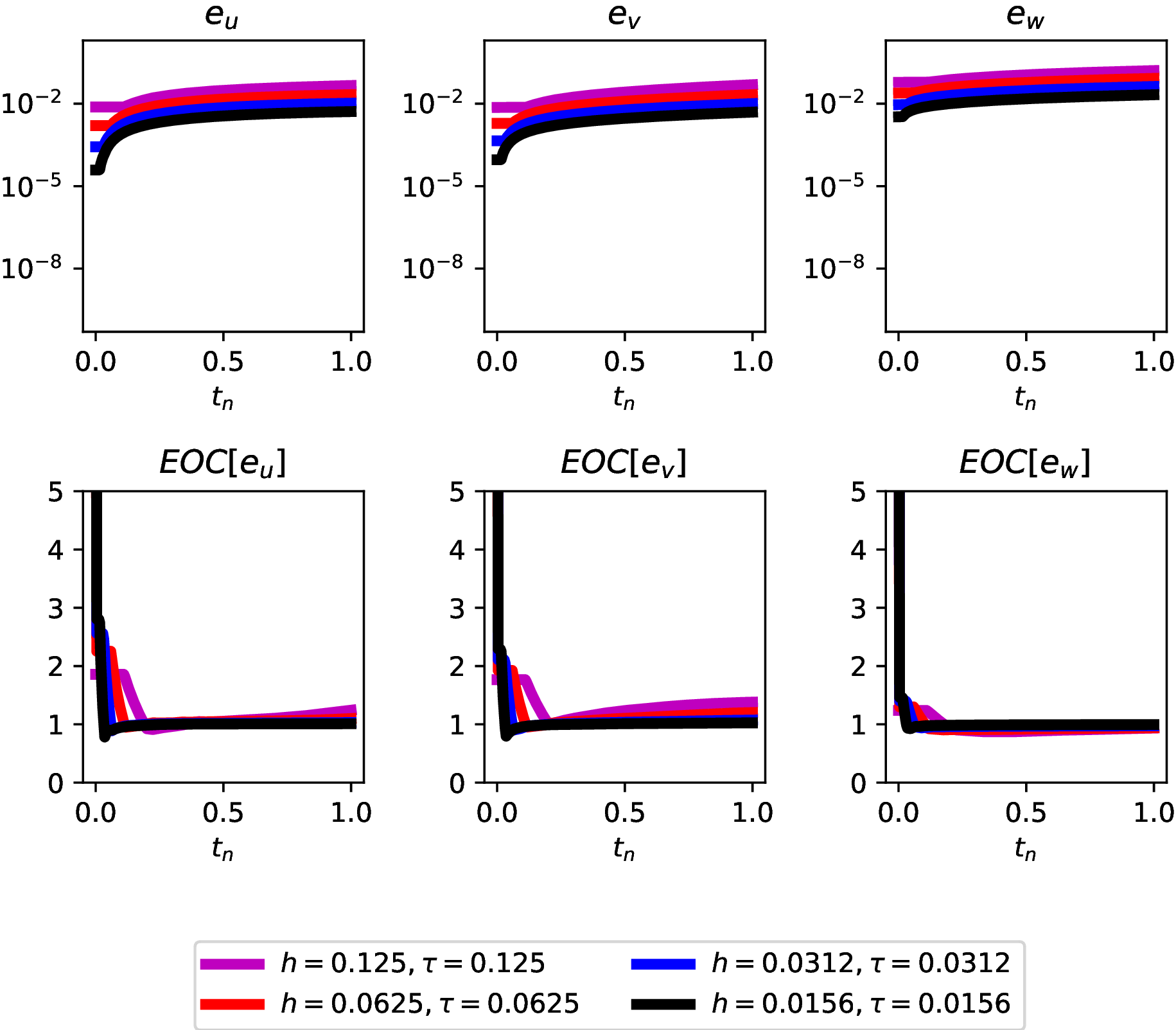}
  } \subfigure[][$q=1$ and $p=2$]{ \includegraphics[
    width=0.30\textwidth]{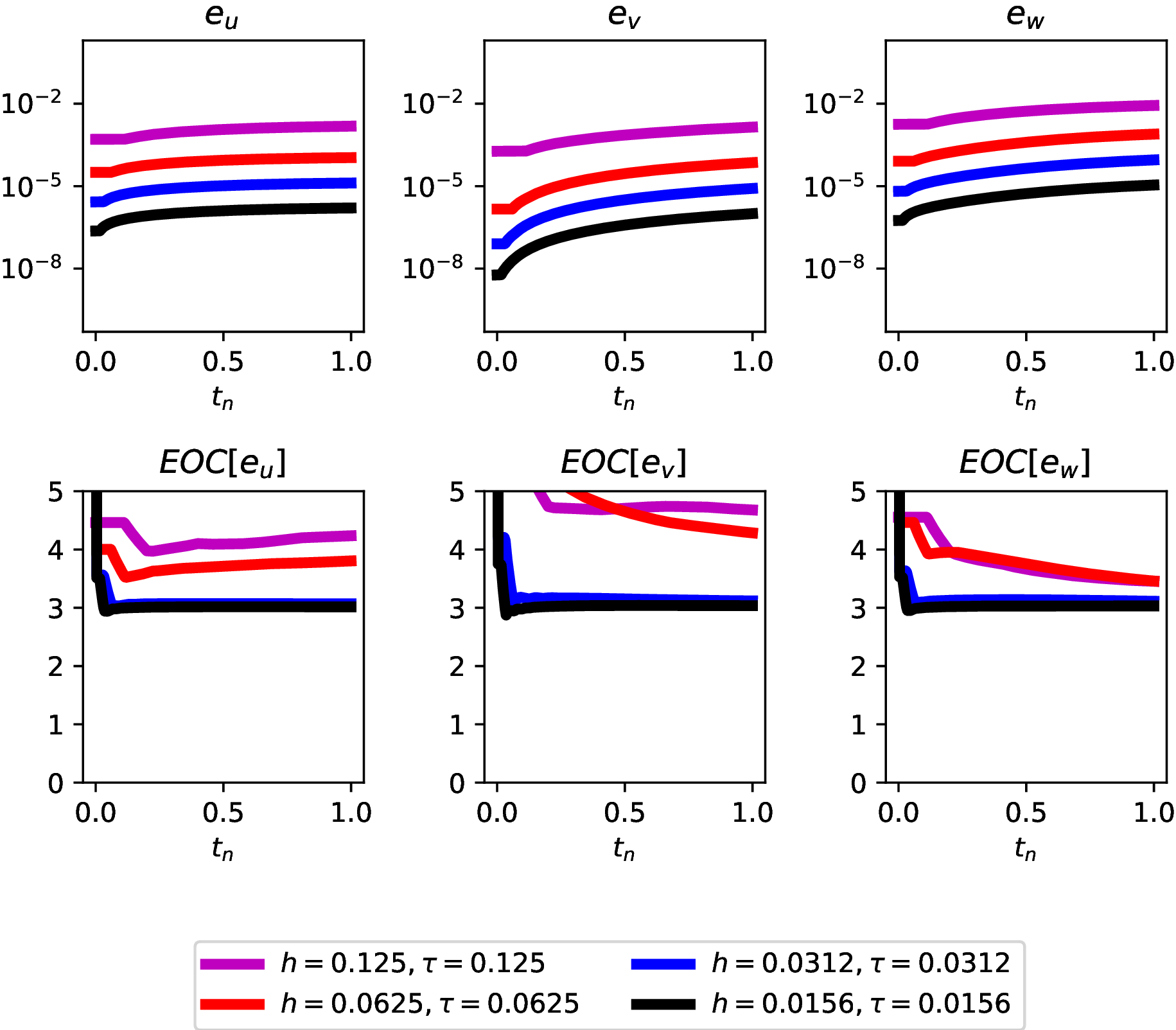}
  } \subfigure[][$q=1$ and $p=3$]{ \includegraphics[
    width=0.30\textwidth]{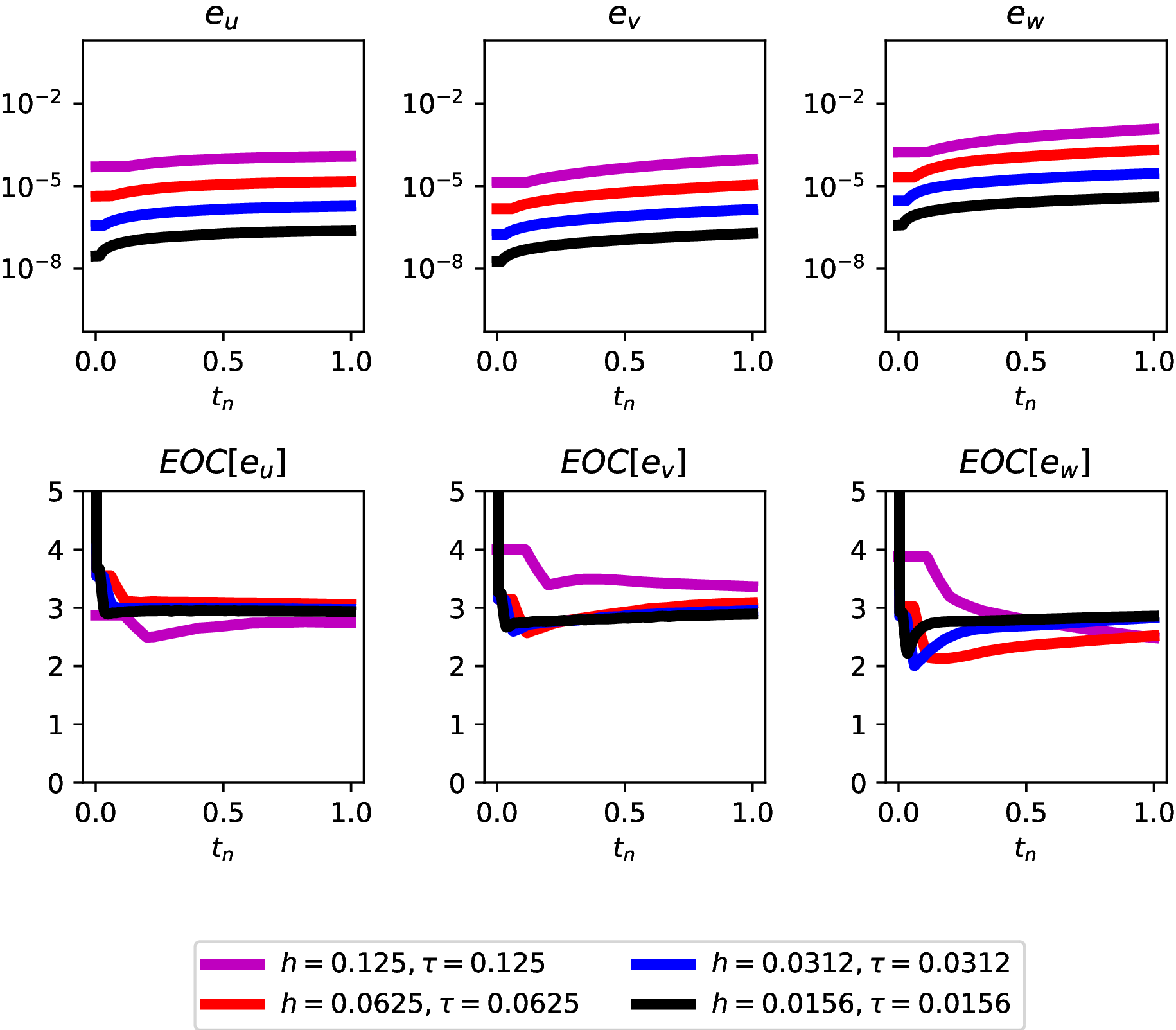}
  }
  \\
    \subfigure[][$q=2$ and $p=1$]{
    \includegraphics[
    width=0.30\textwidth]{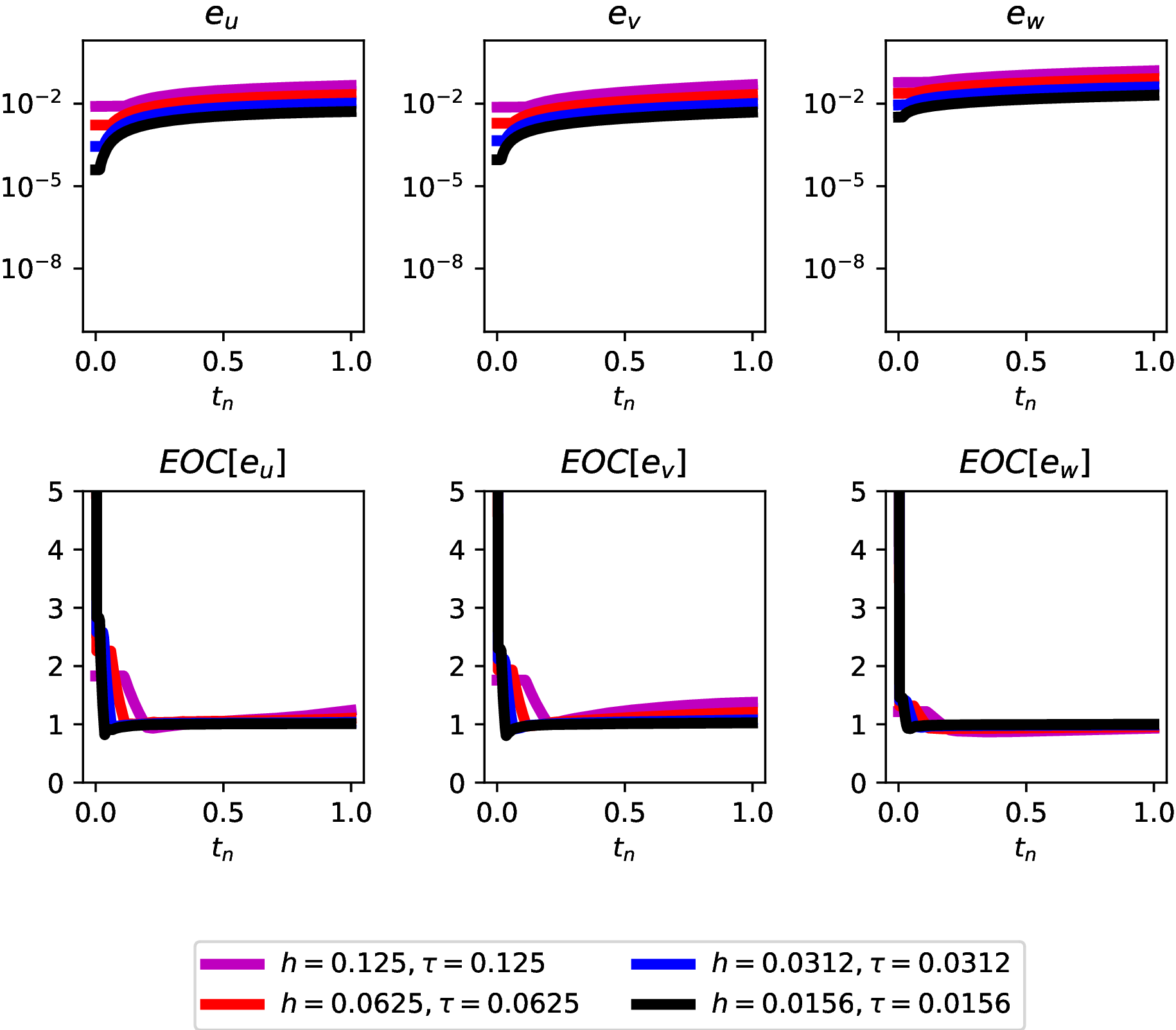}
  } \subfigure[][$q=2$ and $p=2$]{ \includegraphics[
    width=0.30\textwidth]{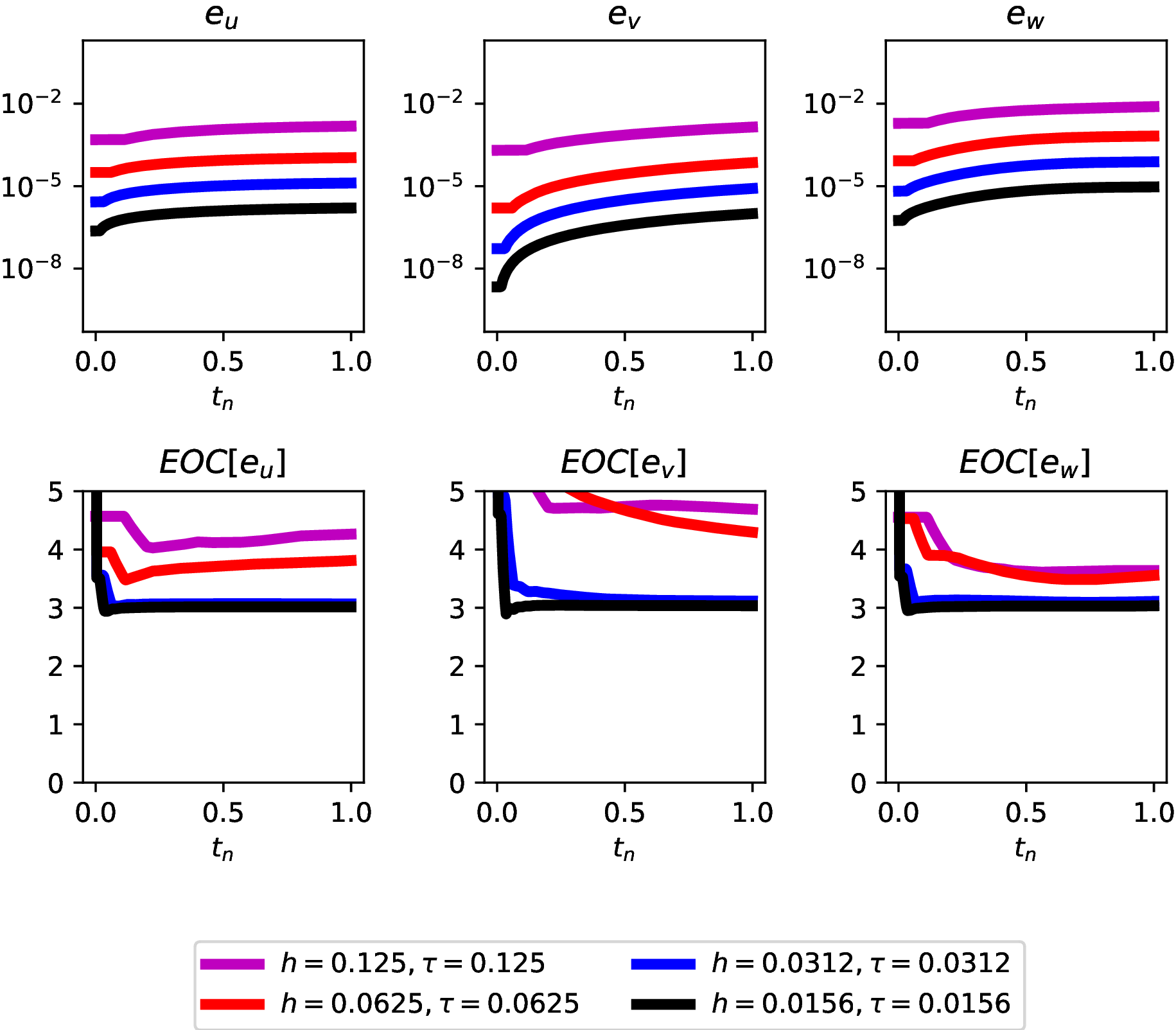}
  } \subfigure[][$q=2$ and $p=3$]{ \includegraphics[
    width=0.30\textwidth]{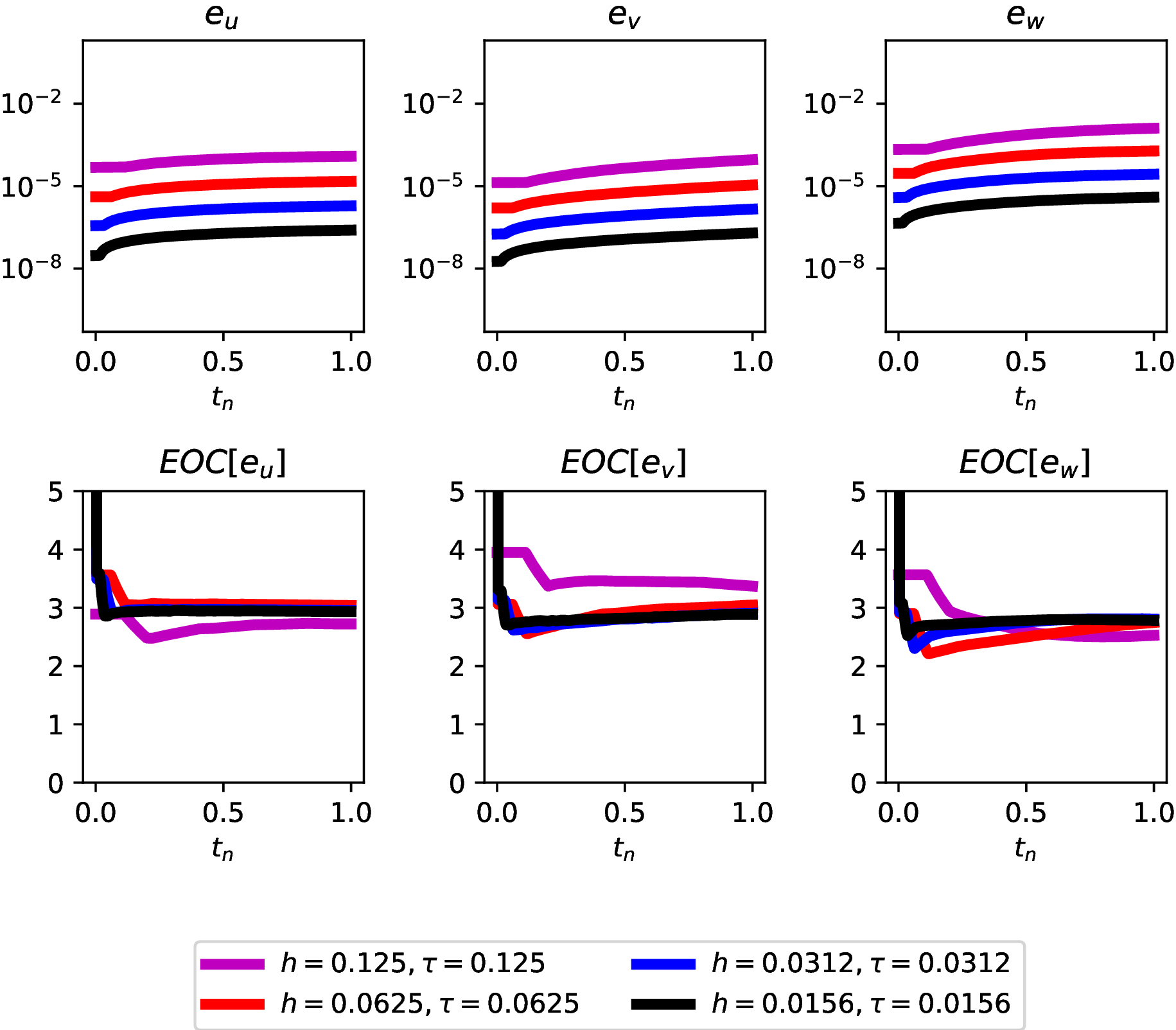}
  }

  \caption{The error of the spatially \emph{discontinuous} finite element scheme
    \eqref{eqn:stfemdg} for the nonlinear Schr{\"o}dinger equation \eqref{eqn:nls}
    initialised by the exact solution \eqref{eqn:nlsexact}. Here
    $e_u,e_v,e_w$ denotes the errors in the components $U,V,W$
    measured in the Bochner norm \eqref{eqn:enorm}. Below we plot the
    EOC \eqref{eqn:eoc} corresponding to each of these errors. \label{fig:nls:eoc:dg}}
\end{figure}
These simulations further support numerical results we found for the linear
wave equation.


Through fixing $\dt{}=0.1$ and $\dx{}=0.04$ we may conduct extensive
numerical simulations investigating the deviation in momentum and
energy conservation laws. Aforementioned simulations for the spatially
continuous approximation \eqref{eqn:stlfem} yield Figure
\ref{fig:nls:dev}.
\begin{figure}[h]
  \centering
  \subfigure[][$q=0$ and $p=1$]{
    \includegraphics[
    width=0.30\textwidth]{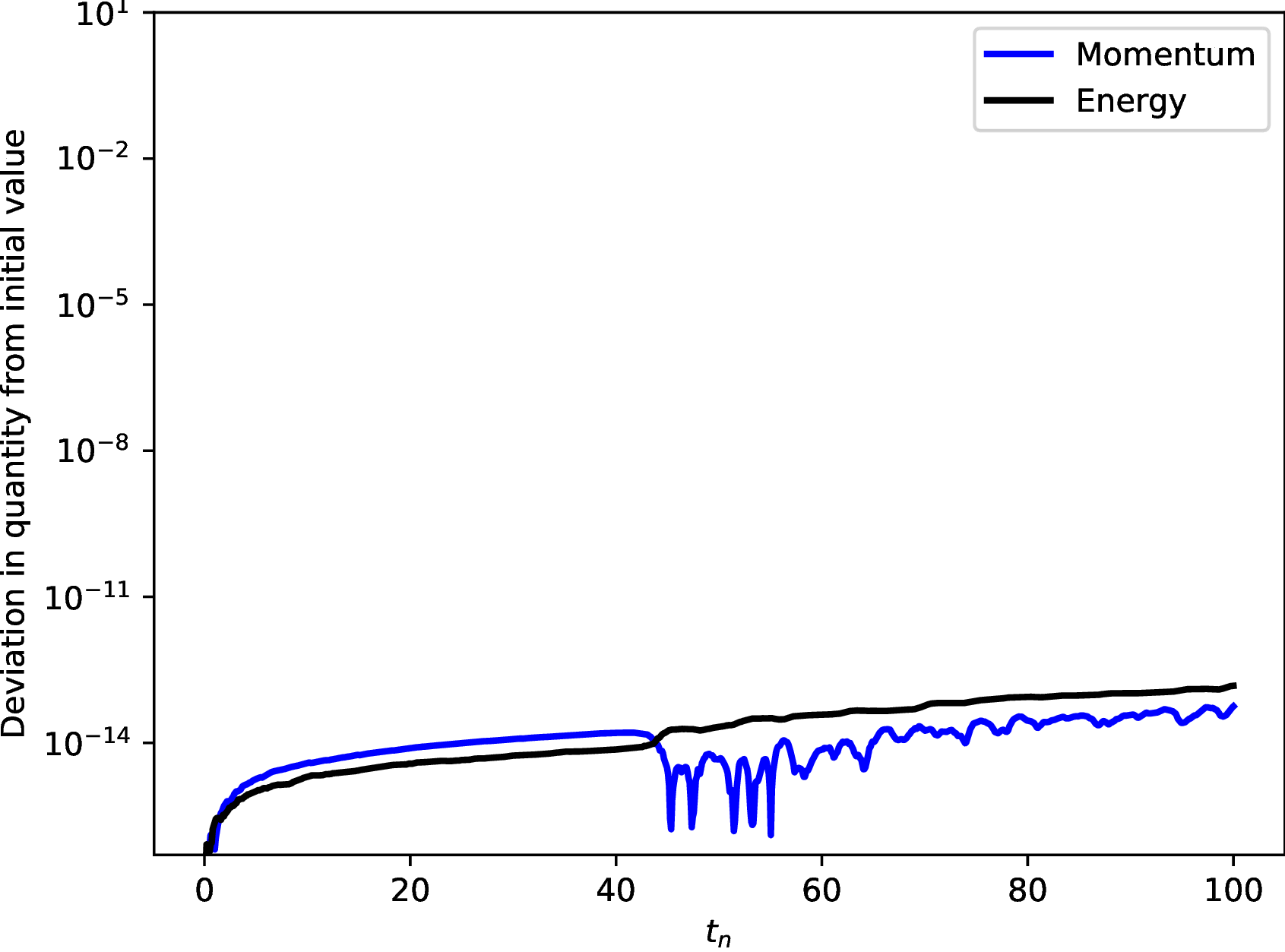}
  } \subfigure[][$q=0$ and $p=2$]{ \includegraphics[
    width=0.30\textwidth]{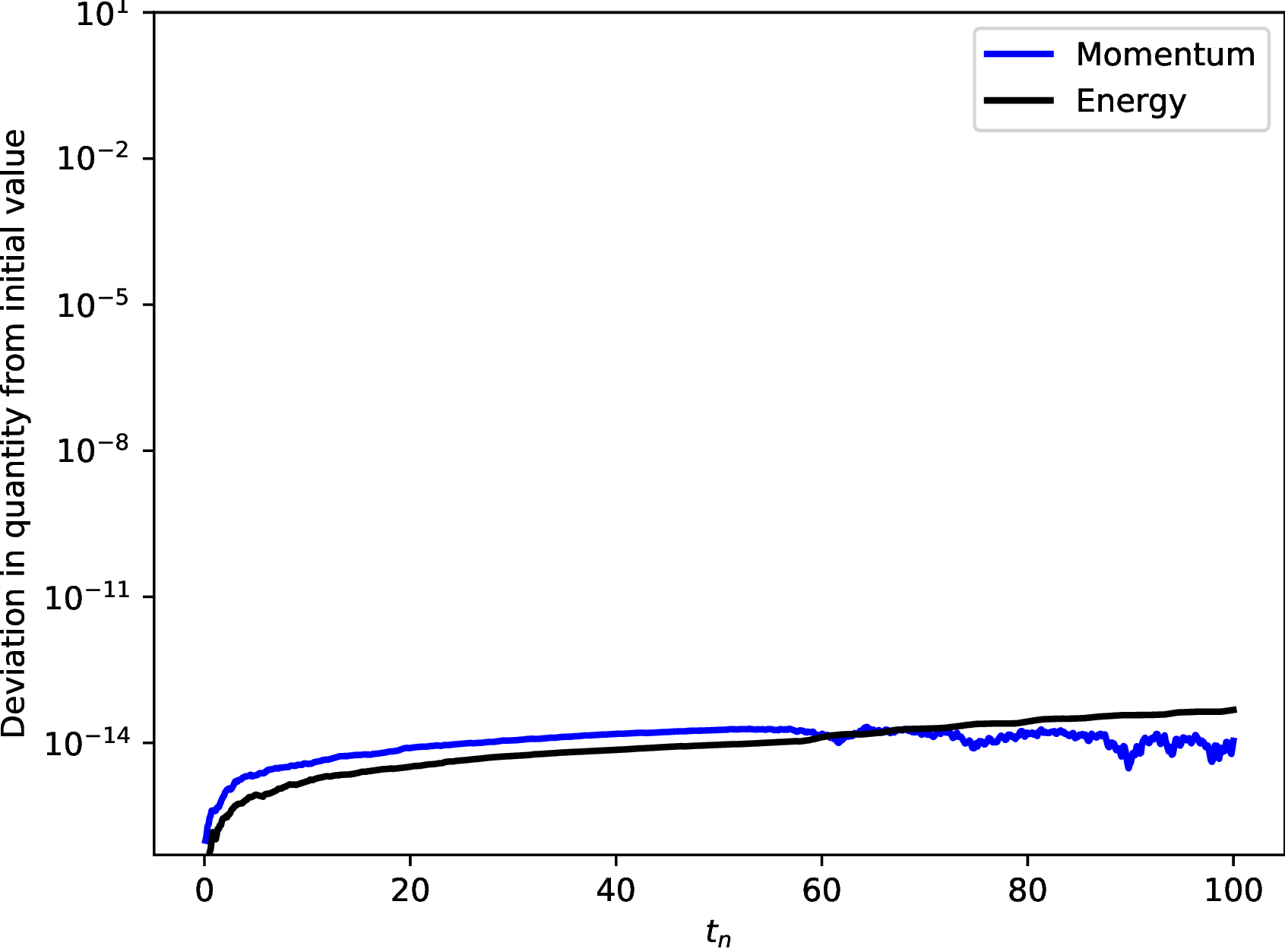}
  } \subfigure[][$q=0$ and $p=3$]{ \includegraphics[
    width=0.30\textwidth]{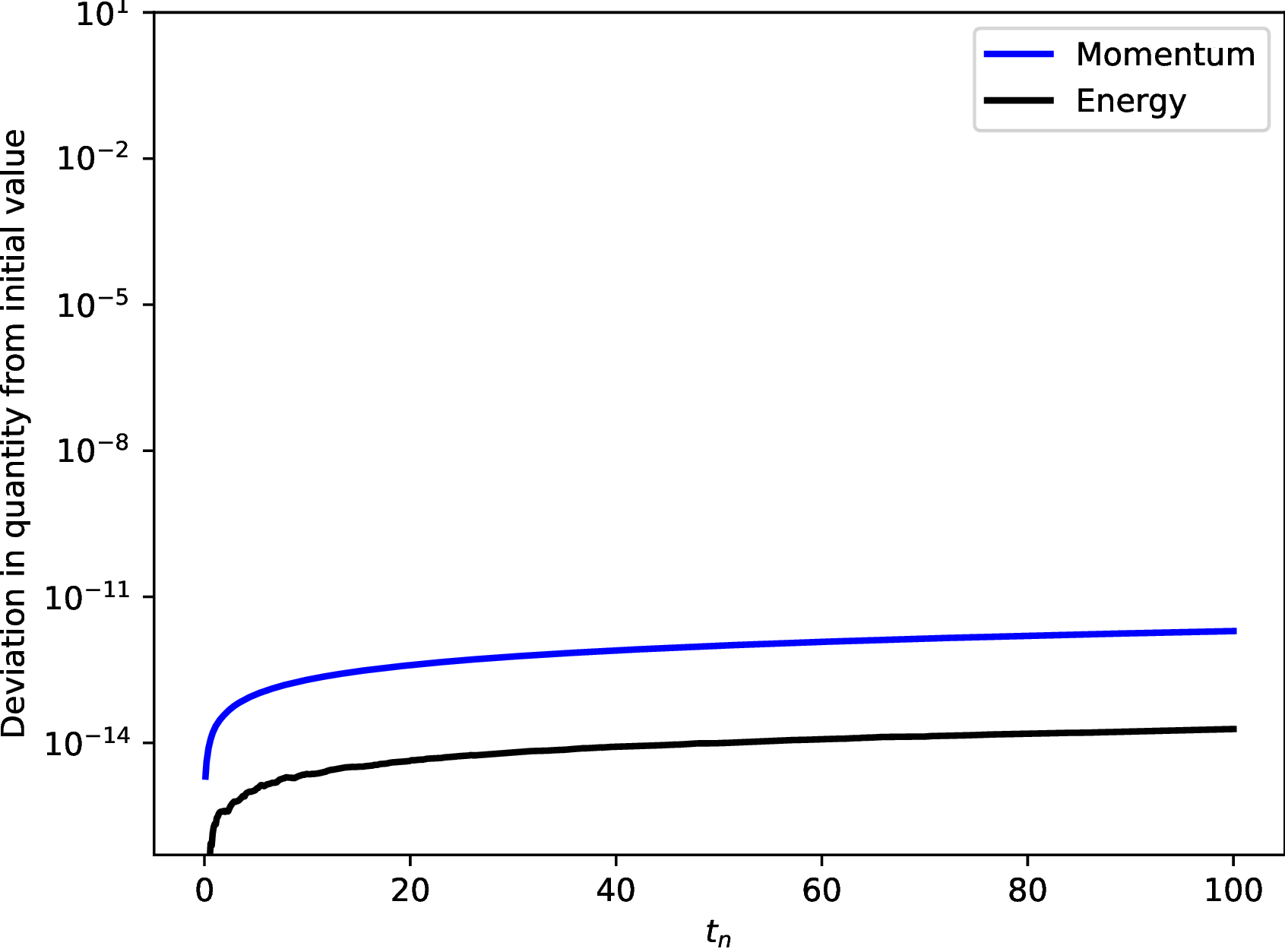}
  }
  \\
    \subfigure[][$q=1$ and $p=1$]{
    \includegraphics[
    width=0.30\textwidth]{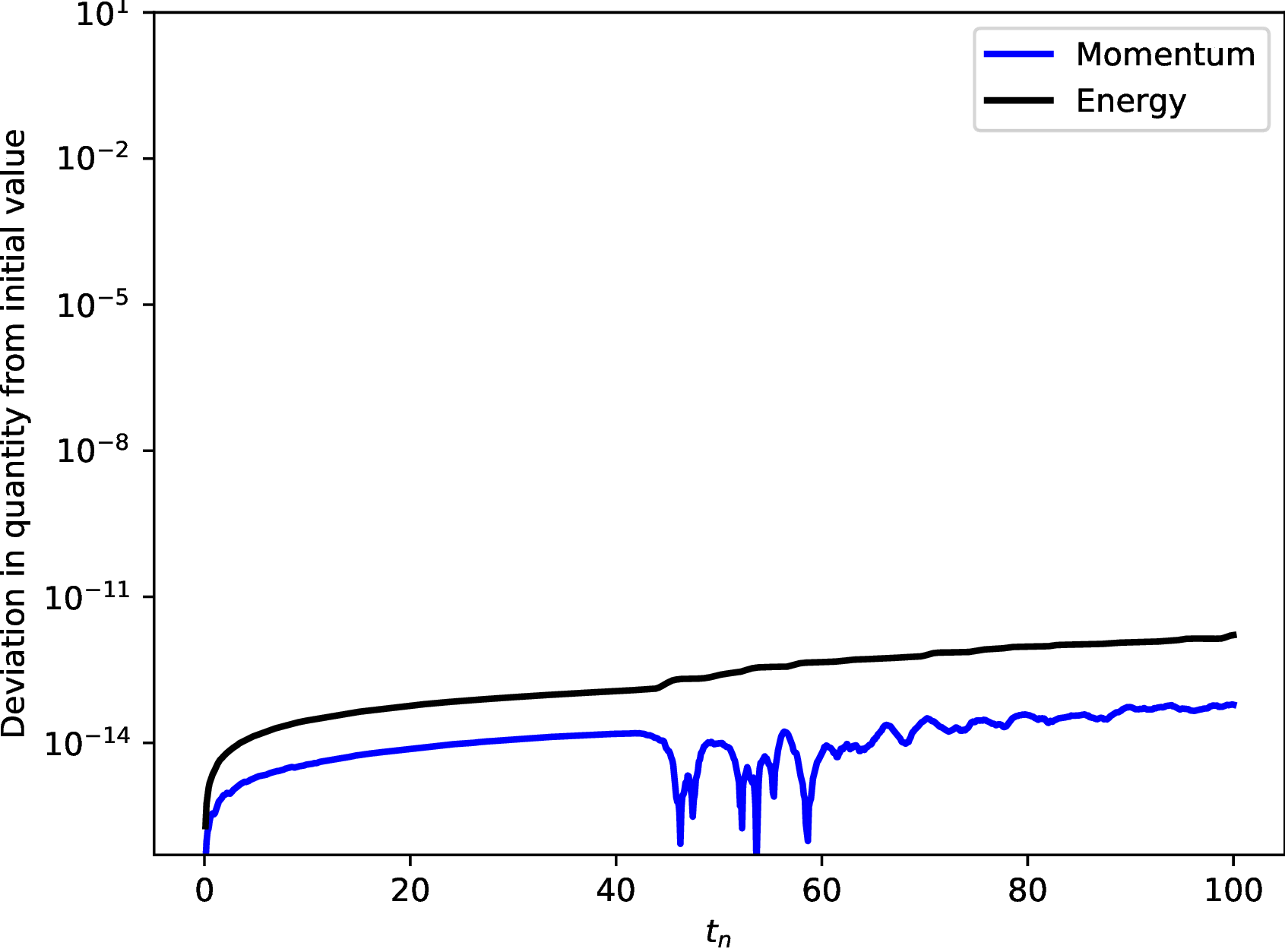}
  } \subfigure[][$q=1$ and $p=2$]{ \includegraphics[
    width=0.30\textwidth]{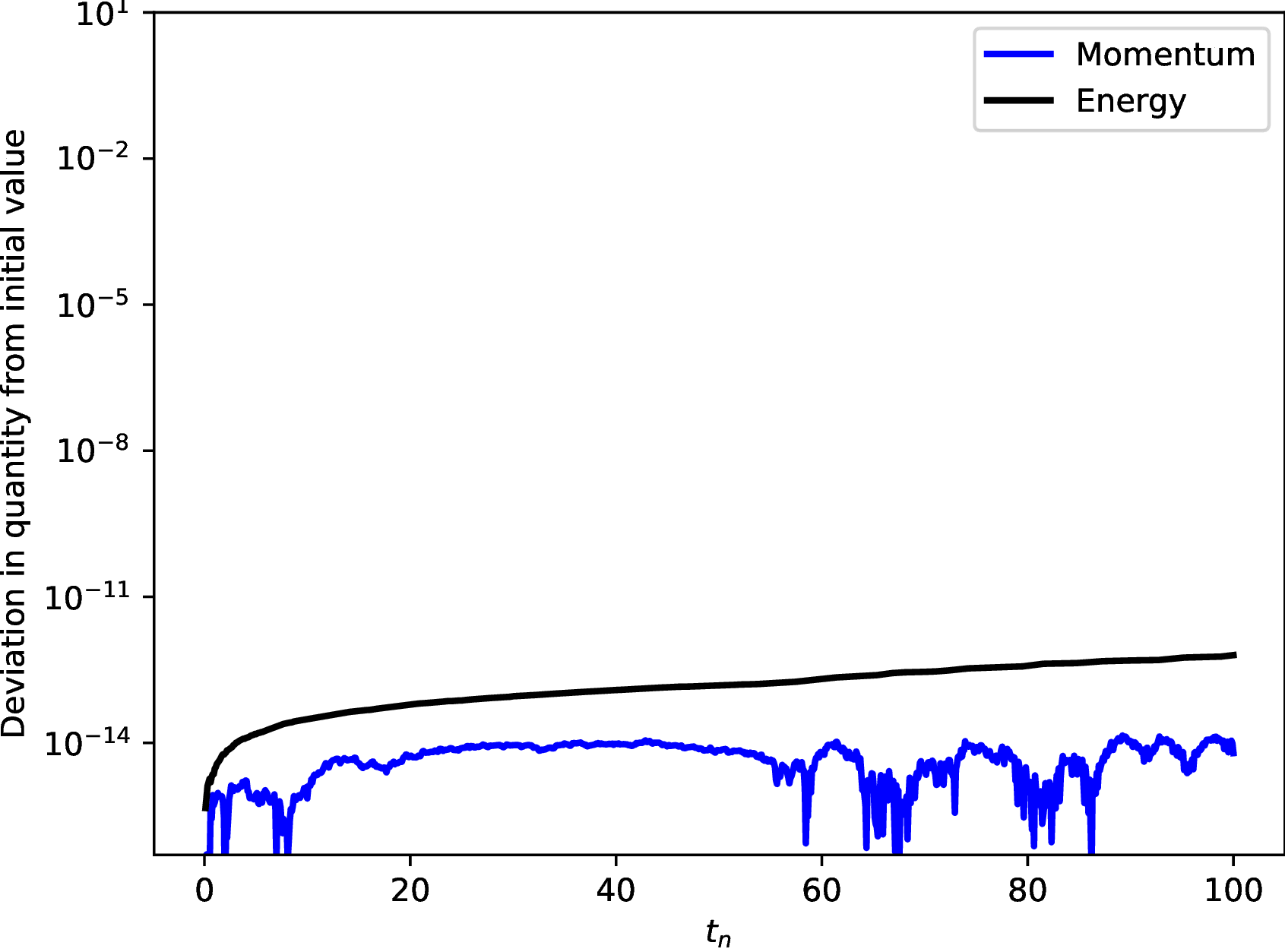}
  } \subfigure[][$q=1$ and $p=3$]{ \includegraphics[
    width=0.30\textwidth]{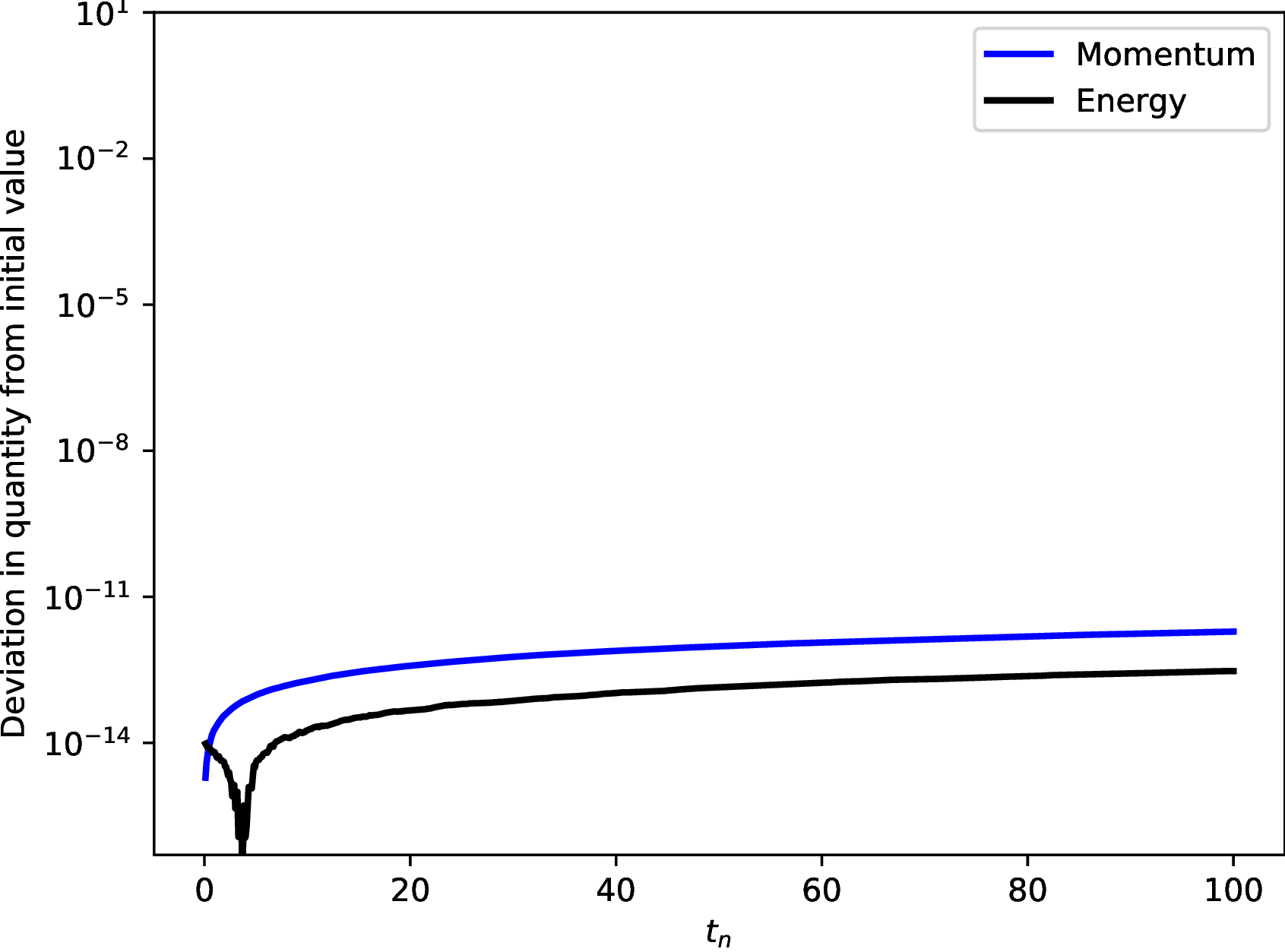}
  }
  \\
    \subfigure[][$q=2$ and $p=1$]{
    \includegraphics[
    width=0.30\textwidth]{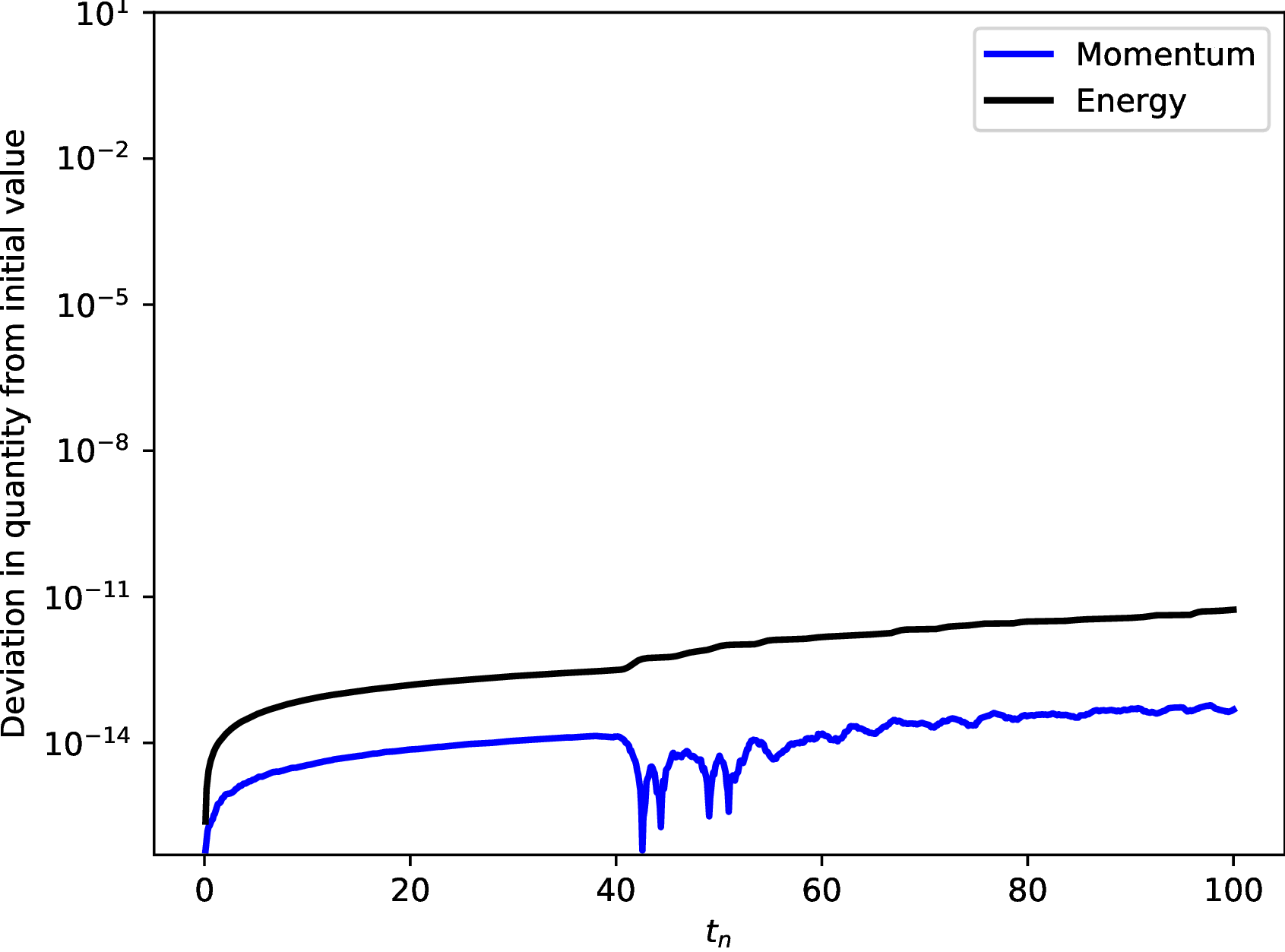}
  } \subfigure[][$q=2$ and $p=2$]{ \includegraphics[
    width=0.30\textwidth]{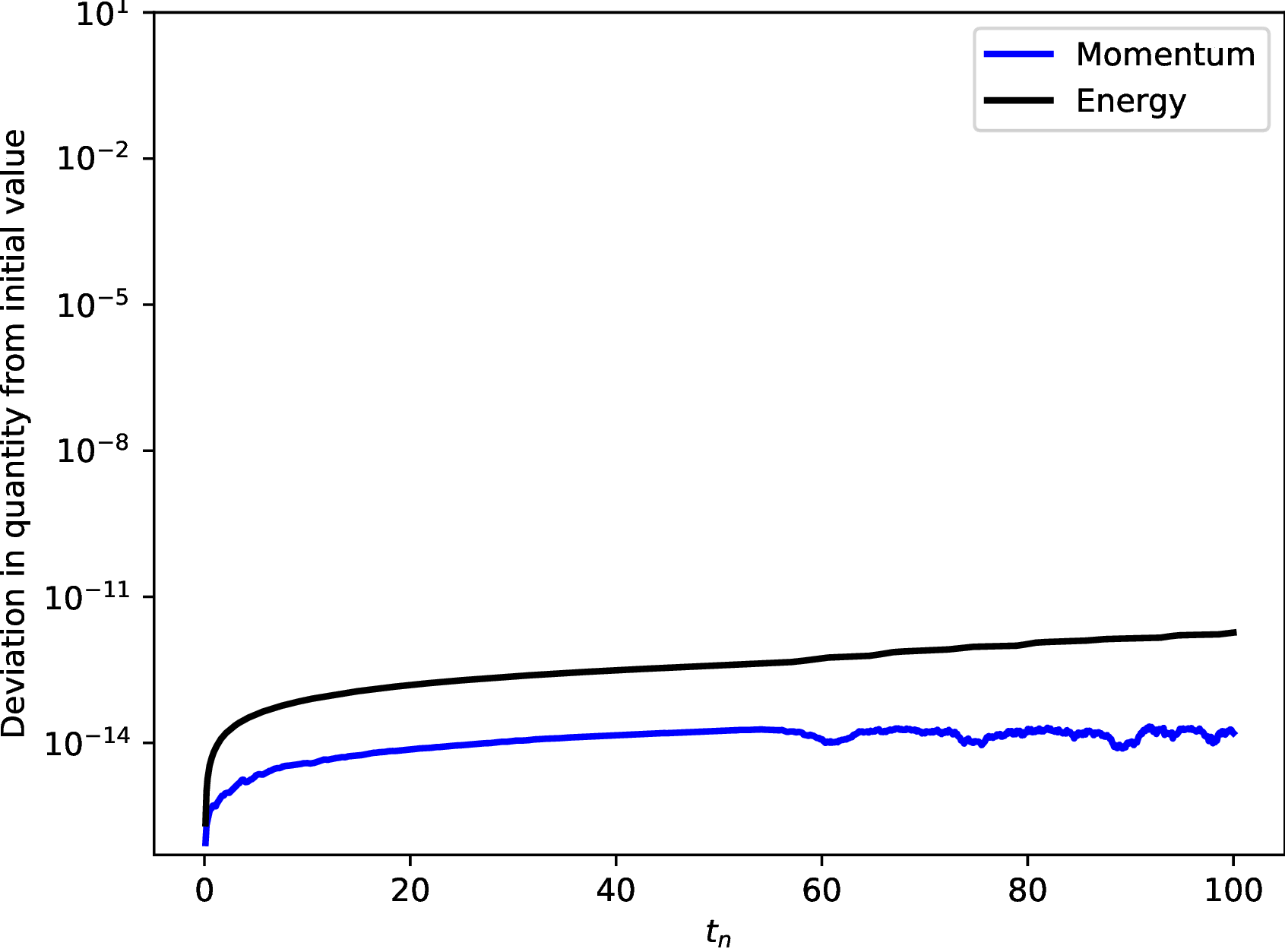}
  } \subfigure[][$q=2$ and $p=3$]{ \includegraphics[
    width=0.30\textwidth]{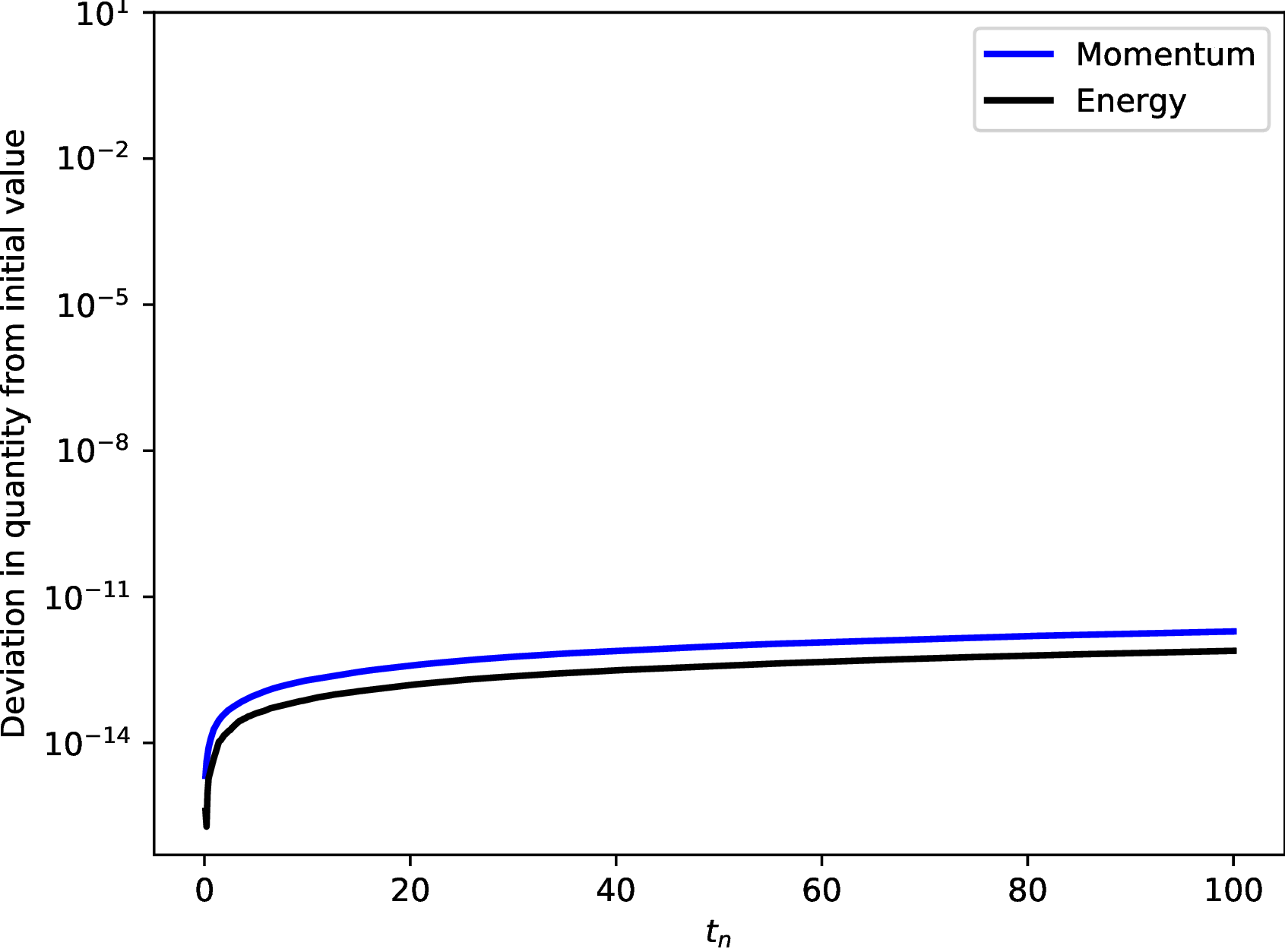}
  }

  \caption{The deviation in the conservation laws \eqref{eqn:momentum}
    and \eqref{eqn:energy} for the spatially continuous
    finite element scheme \eqref{eqn:stlfem} for the
    NLS equation \eqref{eqn:nls} initialised by \eqref{eqn:nlsexact}
    with varying temporal degree $q$ and spatial degree $p$. All
    simulations have uniform temporal and spatial elements with
    $\dt{}=0.1$ and $\dx{}=0.4$. We notice that both momentum and
    energy are preserved below our solver tolerance in all simulations. \label{fig:nls:dev}}
\end{figure}
We observe in Figure \ref{fig:nls:dev} that the energy is preserved
locally, with the deviation in the energy propagating over
time. Furthermore, we note that the momentum is also preserved, which
is not analytically guaranteed. We note that we may exactly preserve
the momentum here as the consistent momentum conservation law
\eqref{eqn:mclaw} is equal to the true momentum conservation law
\eqref{eqn:momentum2} up to solver precision. This is, in part, due to
the non-linearity in the NLS equation being of low order.

Running the same experiment for the spatially discontinuous scheme
\eqref{eqn:stfemdg} we obtain Figure \ref{fig:nls:dev:dg}.
\begin{figure}[h]
  \centering
  \subfigure[][$q=0$ and $p=1$]{
    \includegraphics[
    width=0.30\textwidth]{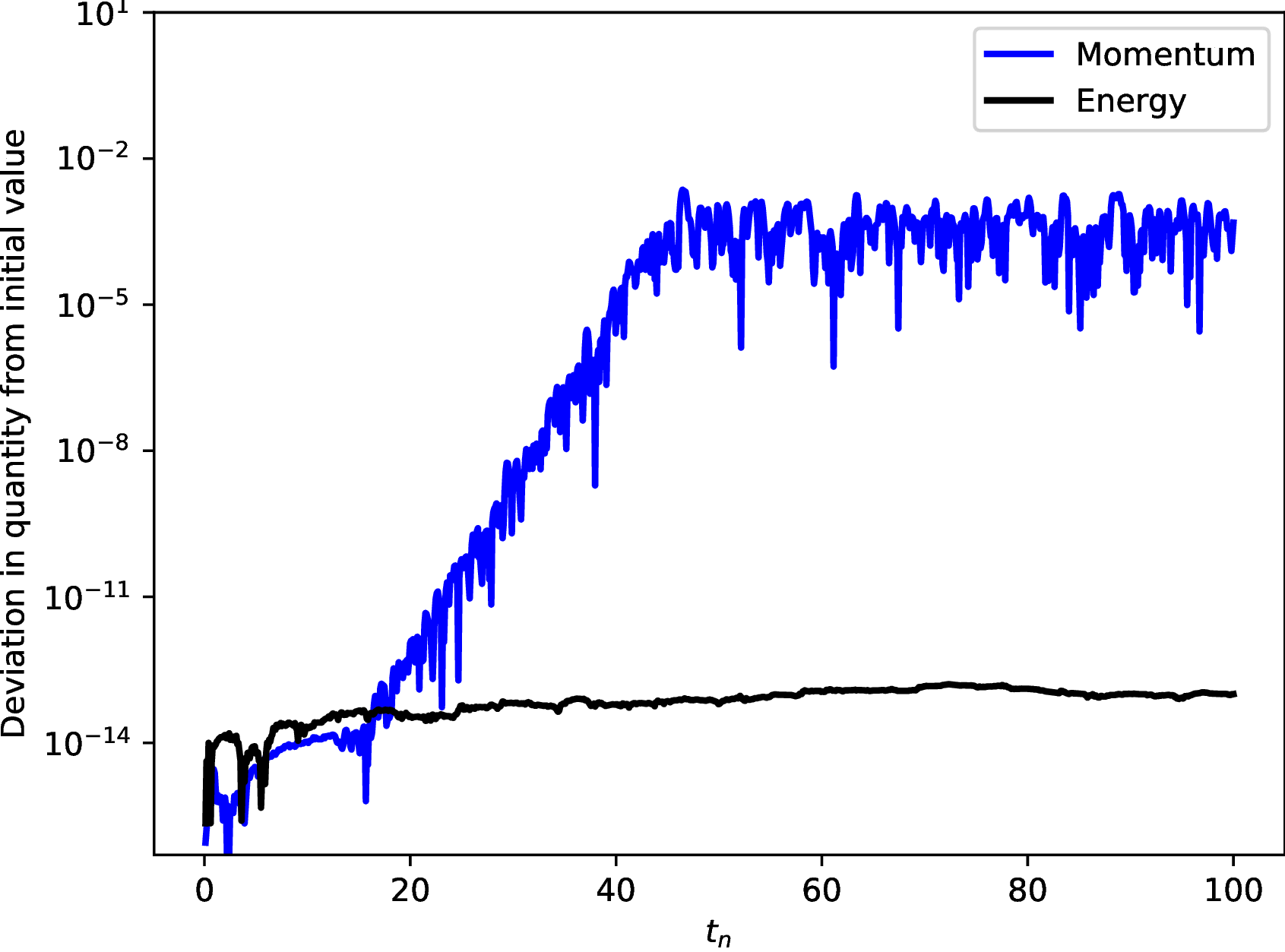}
  } \subfigure[][$q=0$ and $p=2$]{ \includegraphics[
    width=0.30\textwidth]{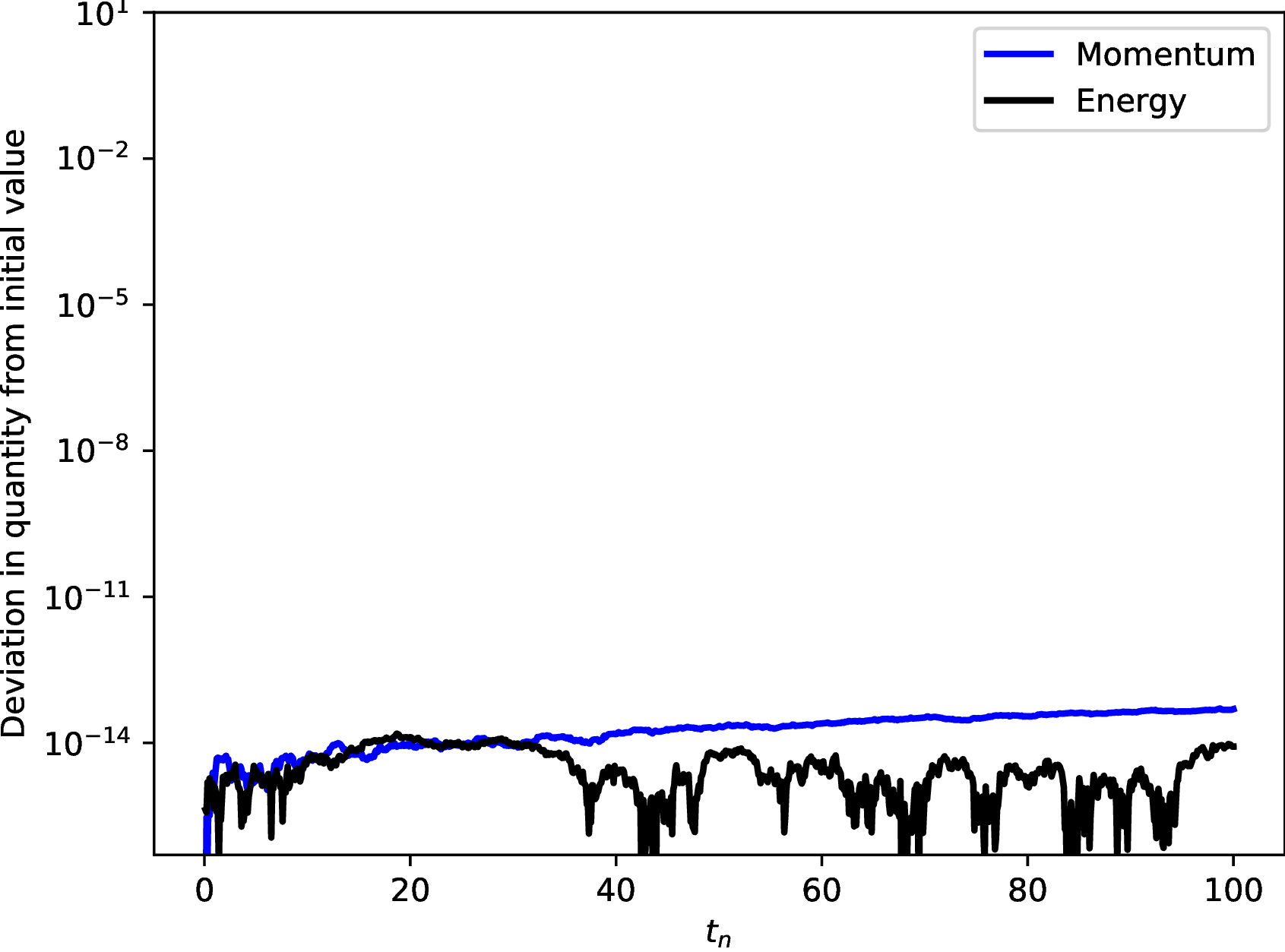}
  } \subfigure[][$q=0$ and $p=3$]{ \includegraphics[
    width=0.30\textwidth]{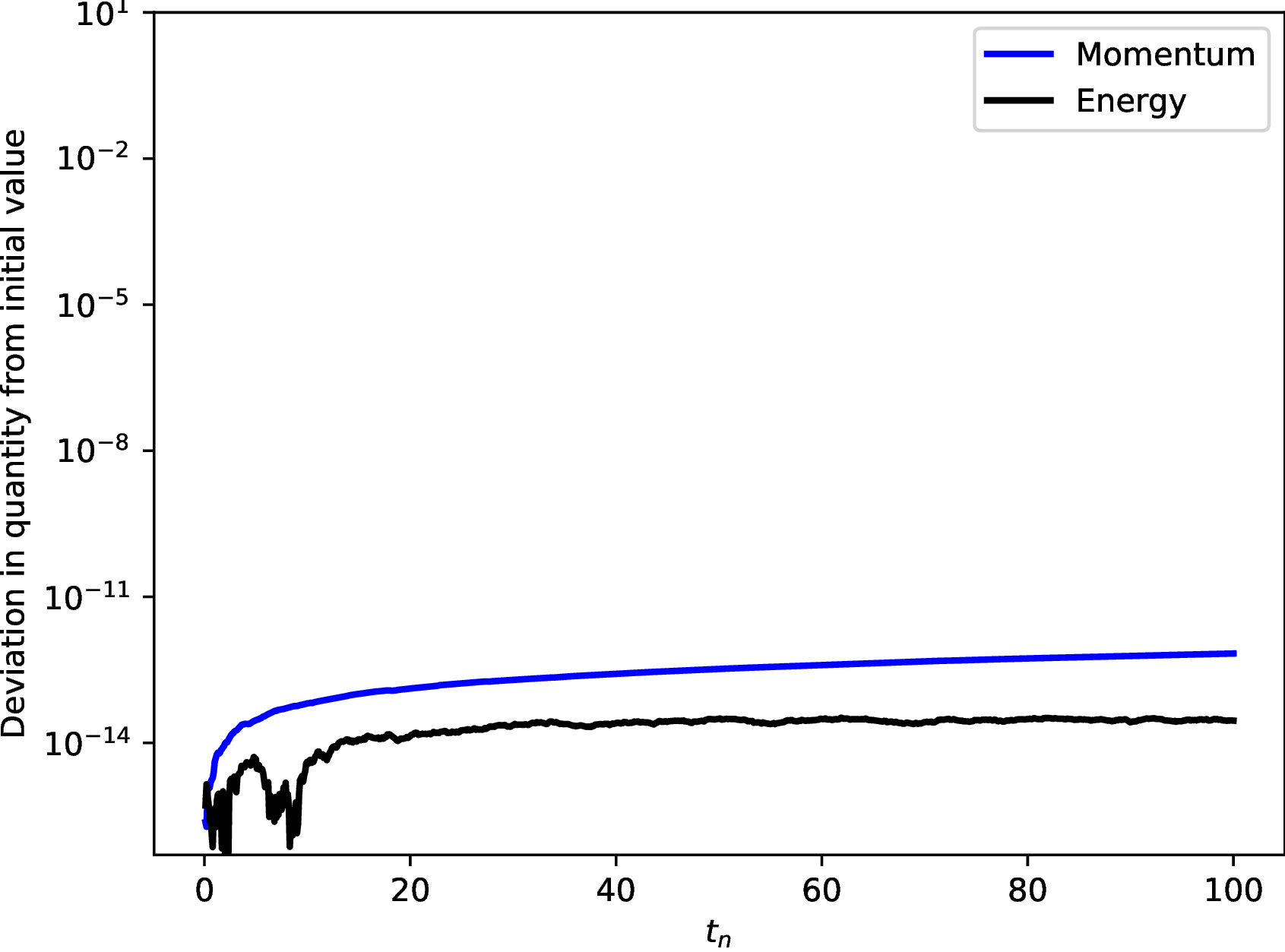}
  }
  \\
    \subfigure[][$q=1$ and $p=1$]{
    \includegraphics[
    width=0.30\textwidth]{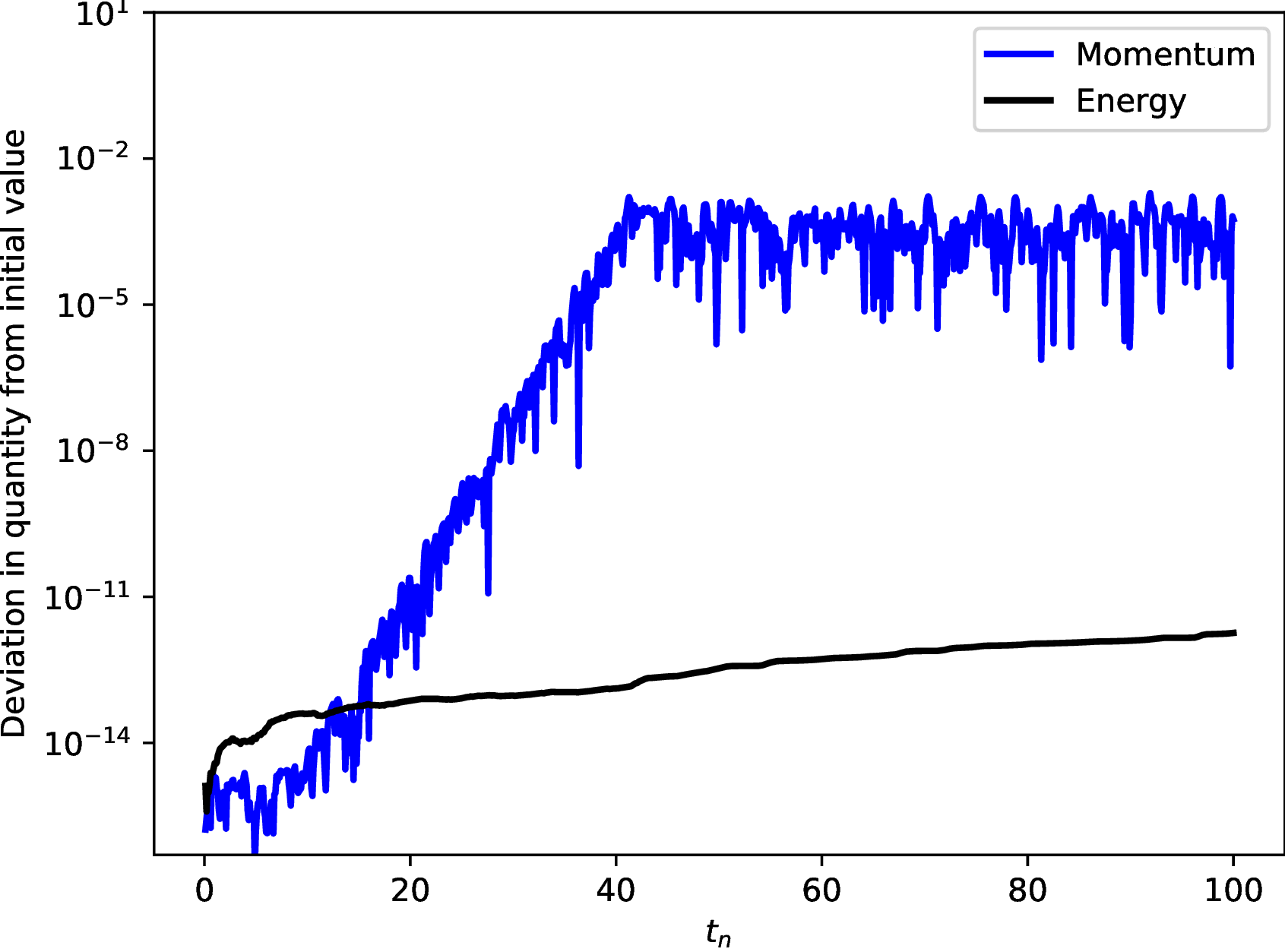}
  } \subfigure[][$q=1$ and $p=2$]{ \includegraphics[
    width=0.30\textwidth]{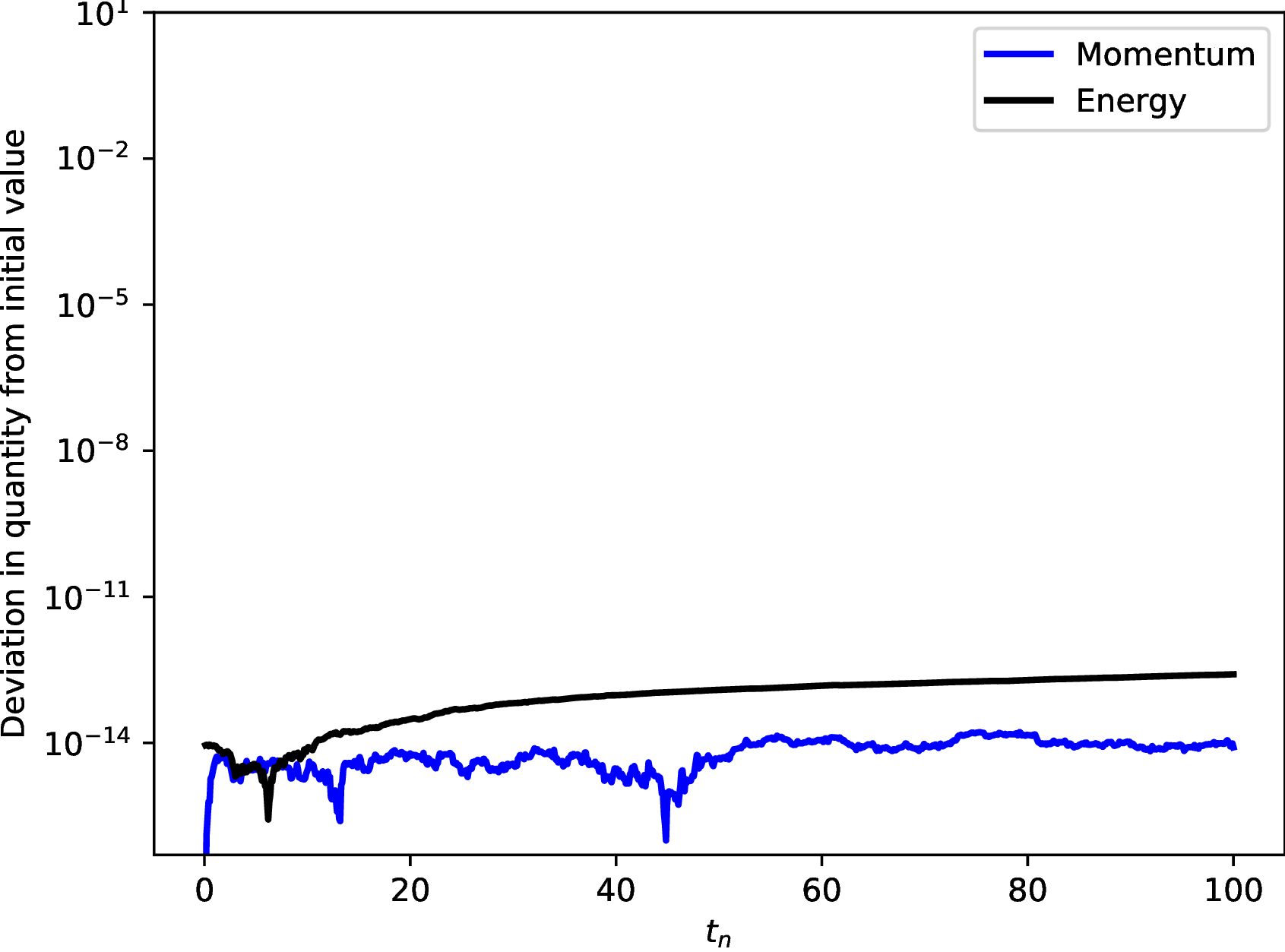}
  } \subfigure[][$q=1$ and $p=3$]{ \includegraphics[
    width=0.30\textwidth]{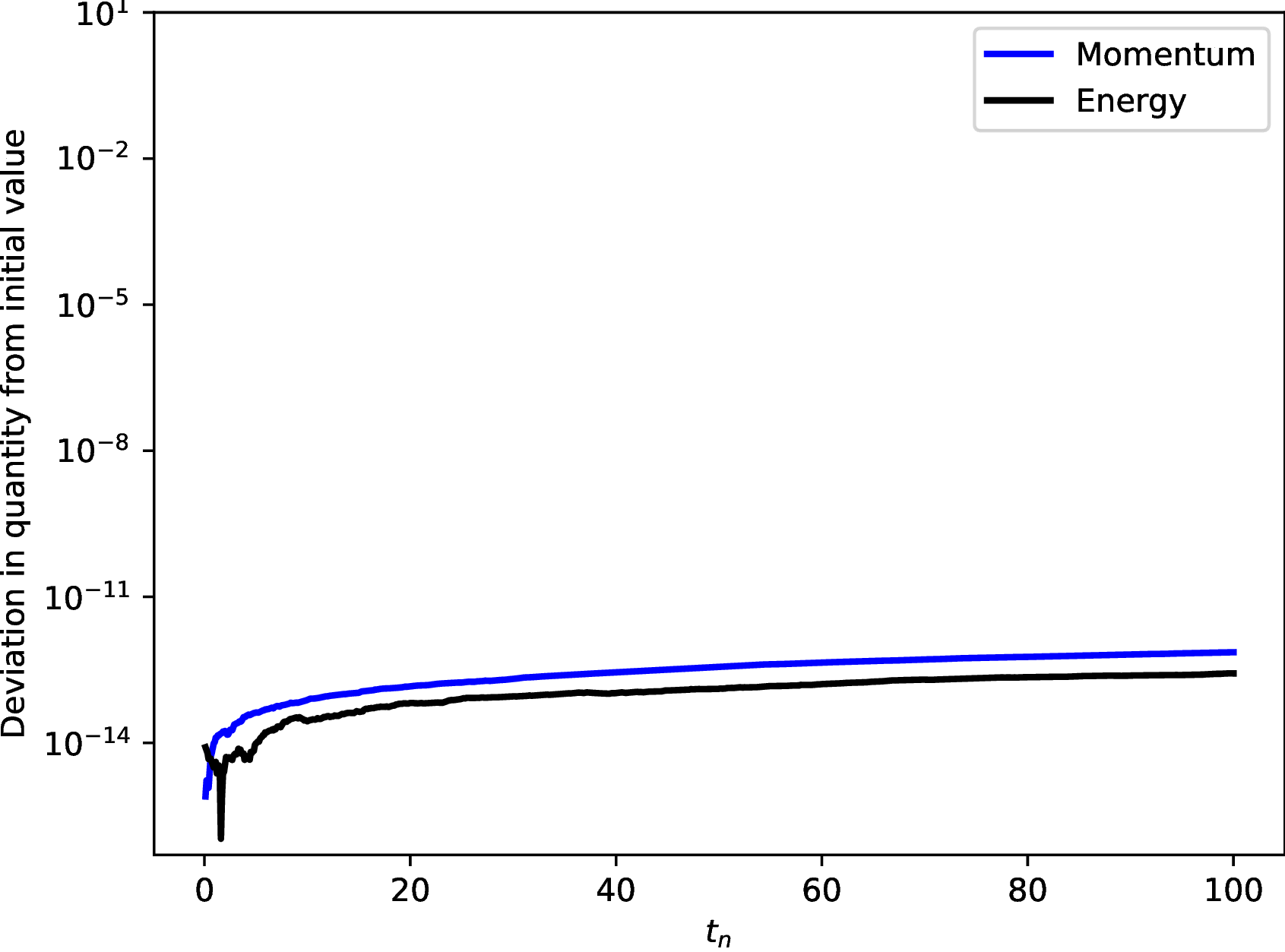}
  }
  \\
    \subfigure[][$q=2$ and $p=1$]{
    \includegraphics[
    width=0.30\textwidth]{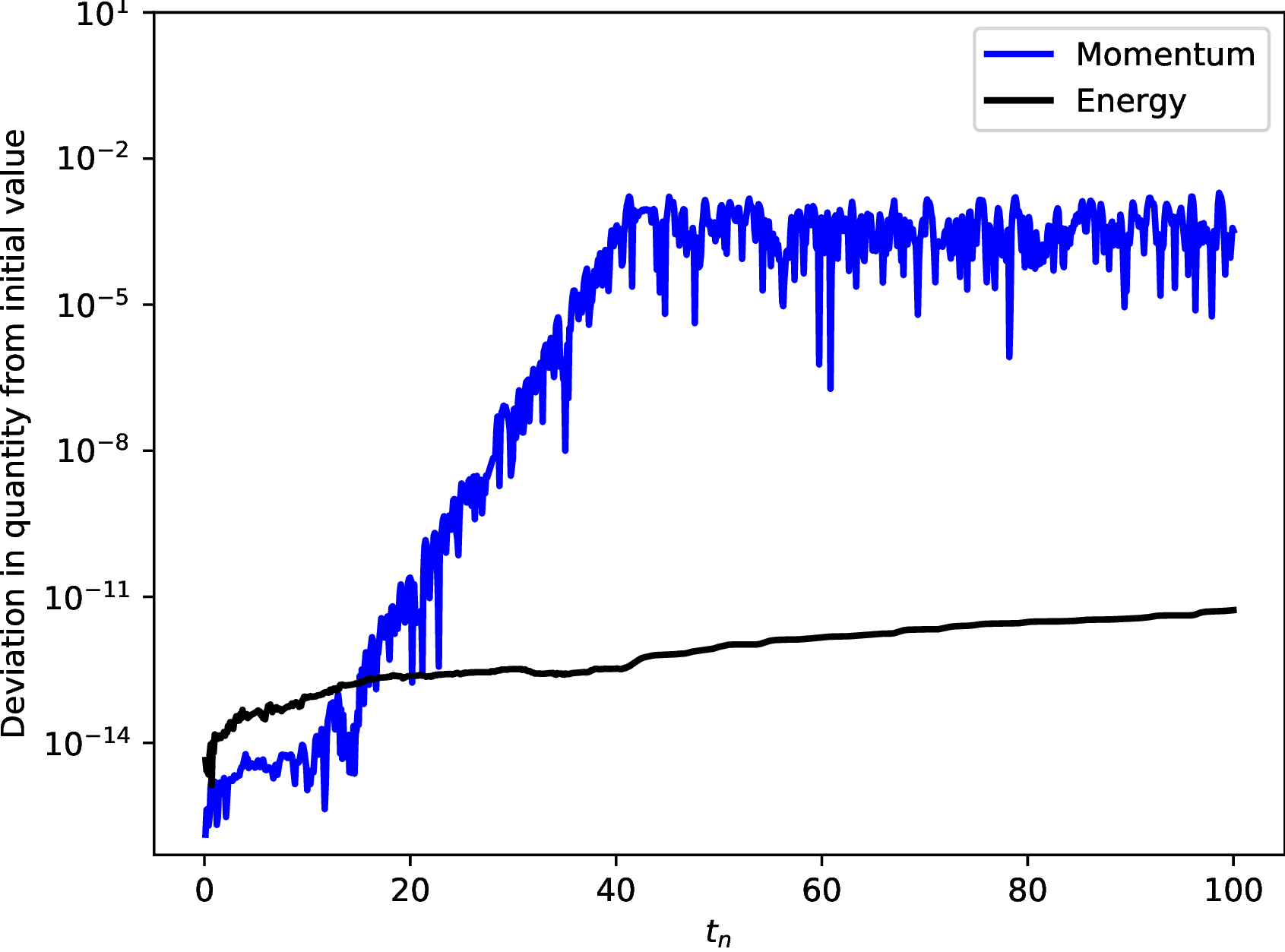}
  } \subfigure[][$q=2$ and $p=2$]{ \includegraphics[
    width=0.30\textwidth]{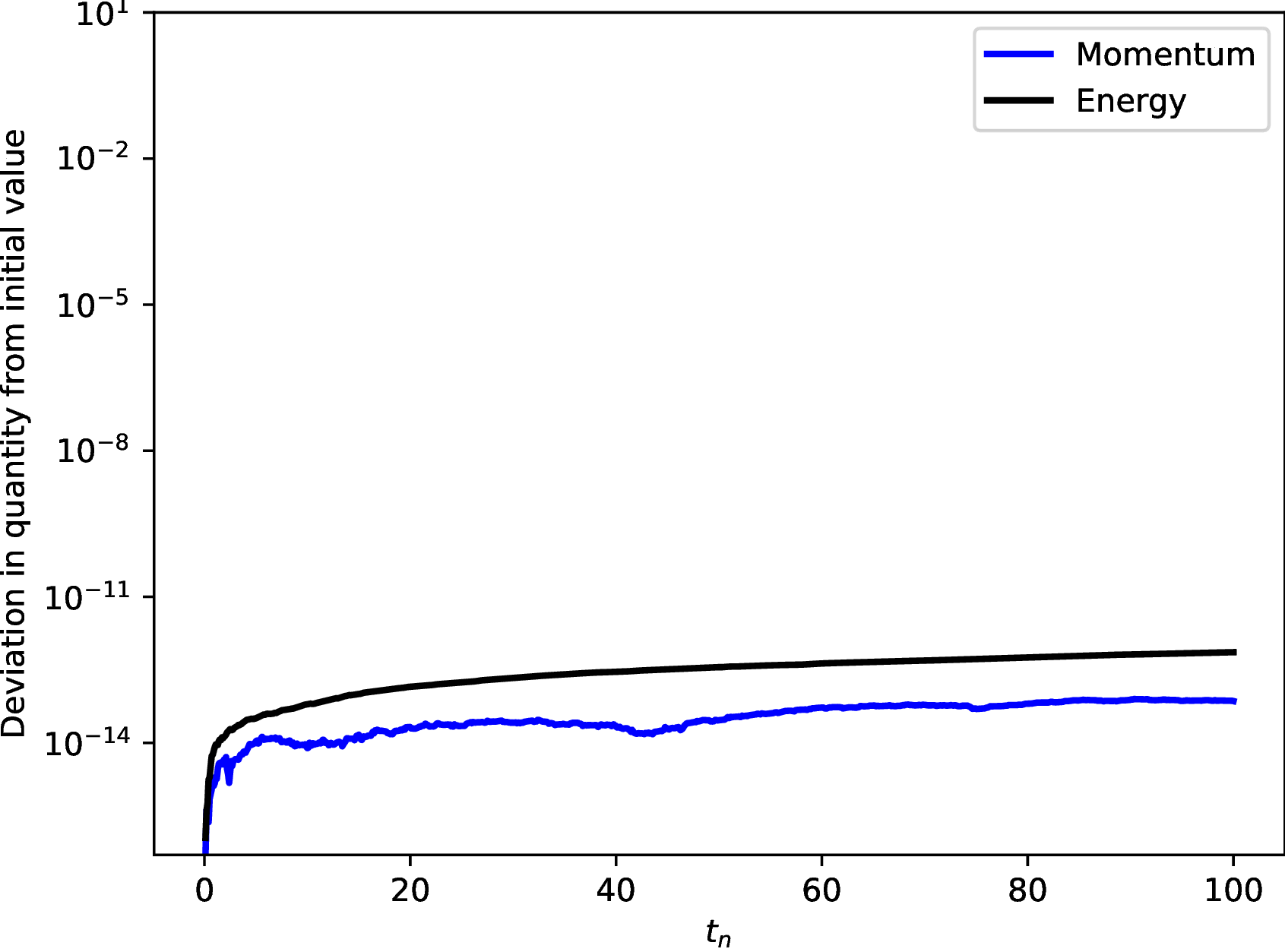}
  } \subfigure[][$q=2$ and $p=3$]{ \includegraphics[
    width=0.30\textwidth]{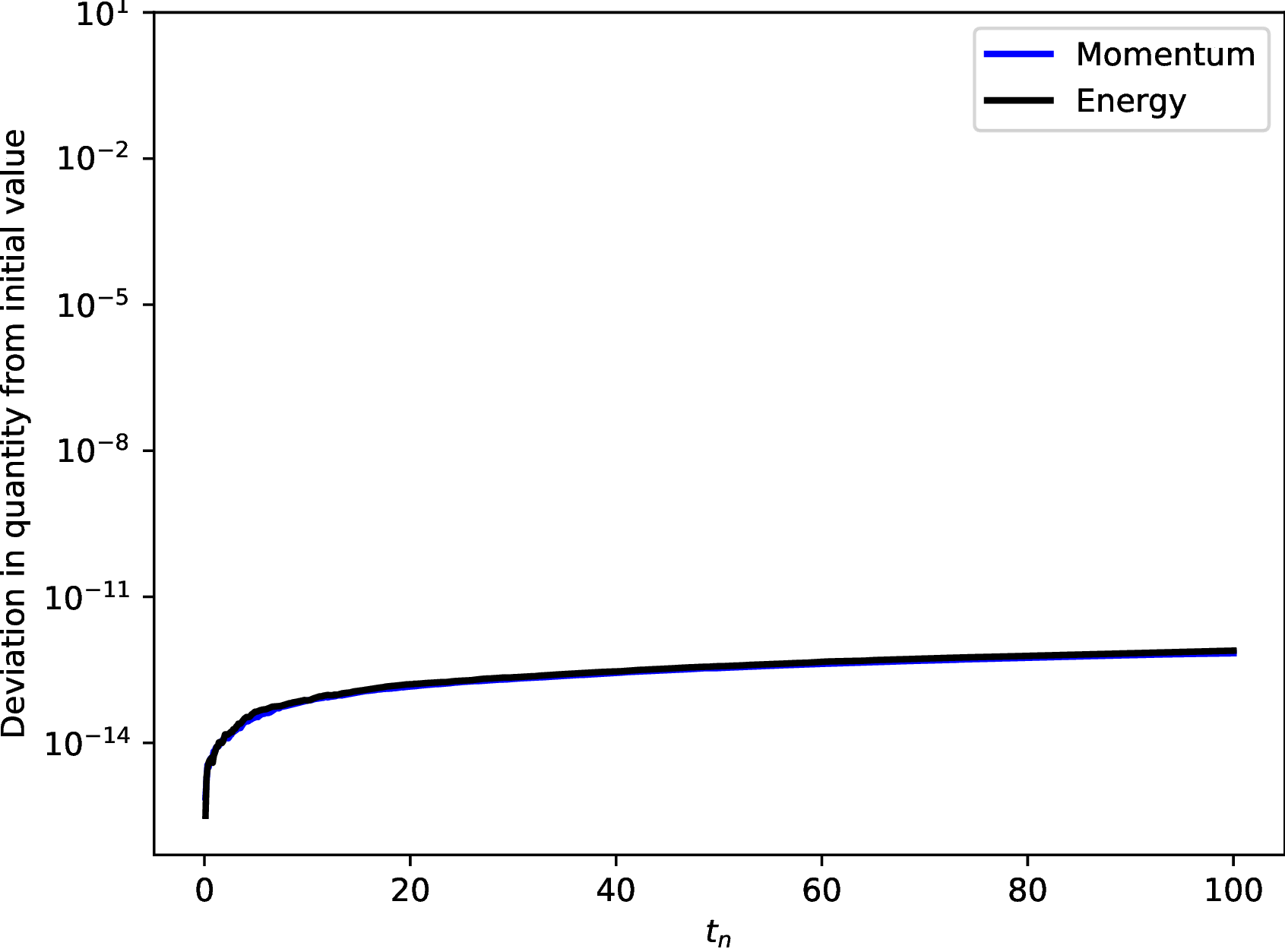}
  }

  \caption{The deviation in the conservation laws
    \eqref{eqn:momentum2} and \eqref{eqn:energy} for the
    finite element scheme \eqref{eqn:stlfem} for the
    NLS equation \eqref{eqn:nls} initialised by \eqref{eqn:nlsexact}
    with varying temporal degree $q$ and spatial degree $p$. All
    simulations have uniform temporal and spatial elements with
    $\dt{}=0.1$ and $\dx{}=0.04$. We notice that for $p>1$ all laws
    are preserved below machine precision. For $p=1$ the deviation in
    momentum propagates significantly over time. \label{fig:nls:dev:dg}}
\end{figure}
We note that the energy \eqref{eqn:energy} is exactly conserved
locally with errors propagating over time, however, the momentum
\eqref{eqn:momentum2} is not preserved locally unless $p>1$. We note
if considering the momentum conserving scheme \eqref{eqn:mstfem},
while the momentum would be well preserved the deviation in energy
would be large for all $p$.

\FloatBarrier

\section{Conclusion} \label{sec:conclusion}

We introduced space-time finite element approximations which
preserve geometric structure, namely the energy conservation law, of
an arbitrary parabolic multisymplectic PDE. Furthermore we proved
existence and uniqueness of this approximation for the wave equation,
followed by an extensive numerical study showing good long term
behaviour of the solution and convergence of the
simulations. Experimentally, the approximations converged optimally in
time, with sub-optimal spatial behaviour for both discretisations
considered. Our approximation facilitates discretisations over general
space-time tensor product domains, and may be readily implemented on
nonuniform meshes shaped by an appropriate mesh density function or a
posteriori indicator. Furthermore, our methods may be naturally
extended to multisymplectic PDEs of a higher spatial dimension,
allowing for discretisations over irregular domains. In future work,
we shall use the proposed discretisation here to inform fully adaptive
space-time algorithms which preserve geometric structure.

\section*{Acknowledgements}

This work has been partially supported through the IMA small grants
program, the Canadian Research Chairs and NSERC Discovery grant
programs (J.J.), H2020-MSCA-RISE-$691070$ CHiPS and the Simons foundation
(E.C.). Both authors would additionally like to acknowledge the
support of the Isaac Newton Institute for Mathematical Sciences,
Cambridge through the EPSRC grant EP/K032208/1.

The authors also gratefully acknowledge the suggestions of Dr Colin
Cotter which proved invaluable for the space-time finite element
implementation utilised in Section \ref{sec:numerics}.

\bibliographystyle{abbrv}
\bibliography{multisym}

\end{document}